\newtheorem{theorem}{Theorem}[section]
\newtheorem{proposition}[theorem]{Proposition}
\newtheorem{lemma}[theorem]{Lemma}
\newtheorem{corollary}[theorem]{Corollary}
\theoremstyle{definition}
\newtheorem{definition}[theorem]{Definition}
\newtheorem{observation}[theorem]{Observation}
\newtheorem{conjecture}[theorem]{Conjecture}
\newtheorem{remark}[theorem]{Remark}
\newtheorem{construction}[theorem]{Construction}
\title{Fat Shellable Spheres}
\author{Joshua Hinman\\
\small Department of Mathematics\\[-0.8ex]
\small University of Washington\\[-0.8ex]
\small Seattle, WA 98195-4350, USA\\[-0.8ex]
\small \texttt{joshrh@uw.edu}}
\date{}
\begin{document}
\setcounter{MaxMatrixCols}{100}
\NiceMatrixOptions{cell-space-limits=1pt}
\maketitle
\begin{abstract}
    The fatness of a 4-polytope or 3-sphere is defined as $(f_1+f_2-20)/(f_0+f_3-10)$. We construct arbitrarily fat, strongly regular CW 3-spheres that are both shellable and dual shellable. These spheres have $f$-vectors $(\Theta(n),\Theta(n\alpha(n)),\Theta(n\alpha(n)),\Theta(n))$, where $\alpha$ is the inverse Ackermann function.
\end{abstract}


\section{Introduction}

Ziegler writes that ``it would be nice to have arbitrarily fat shellable 3-spheres" \cite{eppstein25}. In this paper, we construct arbitrarily fat shellable 3-spheres.

To explain why the above would be nice, we first consider $f$-vectors of polytopes. If $P$ is a $d$-polytope, then the $f$-vector of $P$ is defined as $(f_0,\ldots,f_{d-1})$, where $f_k$ represents the number of $k$-dimensional faces of $P$ for $k=0,\ldots,d-1$. Herefrom arises the guiding question of much polyhedral combinatorics: which integer vectors are the $f$-vector of a polytope?

Steinitz answered this question for 3-polytopes in 1906 \cite{steinitz06}. An integer vector $(f_0,f_1,f_2)$ is the $f$-vector of a 3-polytope if and only if the following hold:
\begin{itemize}
    \item $f_0 - f_1 + f_2 = 2$ (Euler's relation),
    \item $2f_1 - 3f_0 \geq 0$ (tight for simple polytopes),
    \item $2f_1 - 3f_2 \geq 0$ (tight for simplicial polytopes).
\end{itemize}
The above define a two-dimensional cone in $\mathbb{R}^3$ with apex $(4,6,4)$, the $f$-vector of the 3-simplex. Note that the $d$-simplex in general has $f$-vector $(d+1,\binom{d+1}{2},\ldots,\binom{d+1}{d})$.

A century later, we are nowhere near characterizing the $f$-vectors of 4-polytopes. We therefore ask a simpler question: what is the \emph{$f$-vector cone}, or the closure of the convex cone spanned by all $f$-vectors in $\mathbb{Z}^4$ with apex the $f$-vector $(5,10,10,5)$ of the 4-simplex? All 4-polytopes must satisfy the following (see Bayer \cite{bayer87}):
\begin{itemize}
    \item $f_0 - f_1 + f_2 - f_3 = 0$ (Euler's relation),
    \item $f_0 \geq 5$ (tight for simplex),
    \item $f_3 \geq 5$ (tight for simplex),
    \item $f_1 - 2f_0 \geq 0$ (tight for simple polytopes),
    \item $f_2 - 2f_3 \geq 0$ (tight for simplicial polytopes),
    \item $5f_0 - 2f_1 - 2f_2 + 5f_3 \leq 10$ (proven by Bayer; tight for stacked polytopes and their duals).
\end{itemize}
The above define a three-dimensional cone in $\mathbb{R}^4$ with apex $(5,10,10,5)$. However, we do not know if the set of $f$-vectors spans its whole interior. Here the \emph{fatness} parameter, defined as $(f_1+f_2-20)/(f_0+f_3-10)$, comes into play. If there exist arbitrarily fat 4-polytopes, then this cone is exactly the $f$-vector cone; if fatness of 4-polytopes is bounded, then the $f$-vector cone is smaller. See Ziegler \cite{ziegler20}\cite{ziegler02} for further discussion.

Eppstein, Kuperberg, and Ziegler introduced the fatness parameter in 2003 and constructed a 4-polytope with fatness $\sim\!5.048$, the highest then known. Eppstein et al. further constructed a family of strongly regular CW 3-spheres with unbounded fatness, having $f$-vectors $(\Theta(n^{12}),\Theta(n^{13}),\Theta(n^{13}),\Theta(n^{12}))$. The latter construction was probabilistic and involved subdividing a high-genus surface into a CW complex with many edges, then thickening it to obtain a three-dimensional complex with many edges and ridges. \cite{eppstein03}

In 2006, Paffenholz constructed a family of polytopes with fatness $6-\varepsilon$ for arbitrarily small $\varepsilon$. Paffenholz's ``E-construction" involved stellating a simple polytope $P$ and merging any two of the resulting facets that share a ridge of $P$ \cite{paffenholz06}. Around the same time, Ziegler constructed a family of polytopes with fatness $9-\varepsilon$, the highest to date. Ziegler's polytopes arise from taking a ``deformed product" of many large polygons, then projecting the product into $\mathbb{R}^4$ while preserving all edges and all large polygon faces \cite{ziegler04}.

A natural approach to constructing fat polytopes is to construct fat 3-spheres first, then attempt to find polytopes that realize them. However, for a sphere to have any hope of polytopality, it must be both shellable and dual shellable. It is unknown whether the probabilistic construction of Eppstein et al. can be specified to yield shellable spheres \cite{eppstein25}. We construct an explicit family of arbitrarily fat, strongly regular CW 3-spheres that are both shellable and dual shellable. Our spheres have $f$-vectors $(\Theta(n),\Theta(n\alpha(n)),\Theta(n\alpha(n)),\Theta(n))$, where $\alpha$ is the inverse Ackermann function.

We generate our family of spheres recursively by gluing together small CW complexes into large ones. Our method is based on recursive constructions of Davenport-Schinzel sequences (see Sharir and Agarwal \cite{sharir95}) and a construction of pattern-avoiding binary matrices due to F\"uredi and Hajnal \cite{furedi92}. We ensure shellability by building CW complexes in a ``bird's-eye view" where each facet has a well-defined top and bottom, making the list of facets from lowest to highest a shelling order.

The structure of our paper is as follows. In Section \ref{s-preliminaries}, we introduce the main concepts of our paper related to polytopes and CW complexes. Section \ref{s-binary-matrices} is a warm-up: we show a modified version of F\"uredi and Hajnal's matrix construction and use it to generate polytopal complexes with high \emph{complexity}, a related parameter to fatness. In Section \ref{s-main-section}, we give our main construction of fat, shellable, dual shellable 3-spheres. We conjecture that these spheres are polytopal.


\section{Preliminaries}
\label{s-preliminaries}

In this section, we introduce the central concepts of our paper. We discuss polytopes and polytopal complexes (\S\ref{ss-polytopes}), CW complexes (\S\ref{ss-cw}), shellings and dual shellings (\S\ref{ss-shellings}), and the fatness parameter (\S\ref{ss-fatness}).

\subsection{Polytopes}
\label{ss-polytopes}

We begin with a brief overview of polytopes and polytopal complexes. We refer the reader to \cite{grunbaum03}\cite{,ziegler95} for any undefined terminology.

\begin{definition}
    A \emph{$d$-polytope} is the convex hull of finitely many points in $\mathbb{R}^d$ with affine hull $\mathbb{R}^d$.
\end{definition}

\begin{definition}
    Let $P$ be a $d$-polytope. A \emph{face} of $P$ is either $P$ itself or a polytope of the form $P \cap H$, where $H$ is a hyperplane that does not intersect the interior of $P$. Faces of dimension $k$ are called \emph{$k$-faces}; 0-, 1-, $(d-2)$-, and $(d-1)$-faces are called \emph{vertices}, \emph{edges}, \emph{ridges}, and \emph{facets}, respectively.
\end{definition}

We regard the empty set as a $(-1)$-polytope.

\begin{definition}
    A \emph{polytopal $d$-complex} is a collection $Q$ of polytopes in $\mathbb{R}^d$ such that
    \begin{itemize}
        \item the empty set is in $Q$,
        \item if $P$ is a polytope in $Q$, then every face of $P$ is in $Q$, and
        \item if $P$ and $P'$ are polytopes in $Q$, then $P \cap P'$ is a (possibly empty) face of both $P$ and $P'$.
    \end{itemize}
\end{definition}

\begin{definition}
    Let $P$ be a polytope. The \emph{boundary complex} of $P$, denoted $\partial P$, is the polytopal complex consisting of all proper faces of $P$.
\end{definition}

\subsection{CW complexes}
\label{ss-cw}

We proceed with an overview of CW complexes. Undefined terminology may be found in \cite{bjorner84}\cite{bjorner83}.

\begin{definition}
    For $d \geq 0$, a \emph{CW $d$-complex} is a topological space defined recursively as follows.
    \begin{itemize}
        \item A CW 0-complex is a finite set of points under the discrete topology.
        \item For $d>0$, a CW $d$-complex $X$ is obtained from a CW $(d-1)$-complex $X'$ by attaching closed $d$-balls $Z_1,\ldots,Z_n$ via gluing maps $\partial Z_1 \to X', \ldots, \partial Z_n \to X'$.
    \end{itemize}
    The \emph{faces} of $X$ are the closed balls used to construct $X$, including initial points, plus the empty set. Faces of dimension $k$ are called \emph{$k$-faces}; 0-, 1-, $(d-1)$-, and $d$-faces are called \emph{vertices}, \emph{edges}, \emph{ridges}, and \emph{facets}, respectively.
\end{definition}

\begin{definition}
    A CW complex $X$ is \emph{regular} if every gluing map used in its construction is a homeomorphism. $X$ is \emph{strongly regular} if, in addition, the intersection of any two faces of $X$ is itself a face of $X$.
\end{definition}

\begin{definition}
    A CW complex $X$ is \emph{pure} if every face of $X$ is contained in a facet of $X$.
\end{definition}

\begin{definition}
    A \emph{CW $d$-sphere} (resp. \emph{$d$-ball}) is a pure CW $d$-complex homeomorphic to a $d$-sphere (resp. $d$-ball).
\end{definition}

The boundary complex of any $(d+1)$-polytope is a strongly regular CW $d$-sphere. The converse does not hold in general but does when $d=3$ due to the famous 1922 theorem of Steinitz \cite{steinitz22}. See also \cite[\S 4]{ziegler95}.

\begin{theorem}[Steinitz]
\label{Steinitz}
    Any strongly regular CW 2-sphere is isomorphic to the boundary complex of a 3-polytope.
\end{theorem}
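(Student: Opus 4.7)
The plan is to reduce Theorem \ref{Steinitz} to the classical graph-theoretic form of Steinitz's theorem, which states that a graph $G$ is the 1-skeleton of a 3-polytope if and only if $G$ is planar and 3-connected. Let $X$ be a strongly regular CW 2-sphere. I would extract the 1-skeleton $G = X^{(1)}$, verify that $G$ is a planar, 3-connected simple graph, apply the classical theorem to obtain a 3-polytope $P$ with $G(P) \cong G$, and finally identify the 2-faces of $X$ with those of $\partial P$.

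First I would check simplicity and planarity. Since each 1-face is a closed ball glued to $X^{(0)}$ by a homeomorphism of $\partial Z_i = S^0$, its two endpoints are distinct, ruling out loops. Strong regularity rules out multi-edges: if two edges $e, e'$ had the same endpoints $\{u,v\}$, then $e \cap e' = \{u,v\}$ would have to be a single face of $X$, which it is not. Planarity is immediate because $G$ embeds in $|X| \cong S^2$. The boundary of each 2-face is the homeomorphic image of $S^1$, so each 2-face of $X$ determines a cycle in $G$, and collectively these cycles tile the sphere.

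Next I would prove 3-connectedness. If $\{u,v\} \subset V(G)$ disconnected $G$, I would argue topologically: the 2-faces of $X$ are open 2-cells whose closures are disks, and each open 2-cell is path-connected and lies in a single component of $|X| \setminus \{u,v\}$. Removing two points from $S^2$ leaves a connected space, so some 2-cell would have to straddle the purported separation, forcing a path in $G \setminus \{u,v\}$ connecting the two sides via the 2-face boundary (after excising any passages through $u$ or $v$, which is possible because strong regularity makes the boundary cycle a simple cycle through $u$ and $v$ at most once each). This gives a contradiction. Simple connectivity ($G$ connected, no cut-vertex) follows by the same topological argument applied to $S^2$ minus zero or one points.

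With $G$ planar and 3-connected, the classical Steinitz theorem gives a 3-polytope $P$ whose 1-skeleton is isomorphic to $G$. To finish, I would invoke Whitney's theorem: a 3-connected planar graph has a unique embedding in $S^2$ up to homeomorphism, and in particular its facial cycles are determined by $G$ alone. Hence the 2-faces of $\partial P$ correspond exactly to the 2-faces of $X$, so the face posets agree and $X \cong \partial P$ as CW complexes.

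The main obstacle is the 3-connectedness step, because it is the one place where we genuinely use \emph{strong} regularity rather than mere regularity and where topological arguments about $S^2$ must be combined with combinatorial properties of the face intersections; the remaining steps either follow directly from definitions or are reductions to cited classical results (Steinitz for graphs, Whitney's uniqueness of embedding).
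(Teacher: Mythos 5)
The paper does not prove this theorem; it is quoted with a citation to Steinitz (1922) and a pointer to \cite[\S 4]{ziegler95}, so there is no internal proof to compare your argument against.

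Your overall plan---extract the 1-skeleton $G$, verify simplicity, planarity, and 3-connectivity, invoke the graph form of Steinitz's theorem, and then use Whitney's uniqueness of planar embedding to match the 2-faces---is the standard reduction and is correct in outline. The simplicity and planarity checks are fine, and the final step via Whitney is the right way to recover the full face poset, not just the graph.

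The 3-connectedness step, however, has a genuine gap as written. Suppose $\{u,v\}$ separates $G$ into nonempty sides $A$ and $B$. You argue that a 2-face $F$ whose boundary meets both sides forces a bridging path ``after excising any passages through $u$ or $v$.'' But if $\partial F$ contains vertices of both $A$ and $B$, then since $\partial F$ is a simple cycle and every $A$--$B$ edge goes through $u$ or $v$, the cycle must visit \emph{both} $u$ and $v$; deleting them from $\partial F$ yields two disjoint arcs, one lying entirely in $A$ and the other entirely in $B$. Neither arc is a path in $G\setminus\{u,v\}$ connecting the two sides, so the excision produces no contradiction. A correct version of the topological argument instead partitions the 2-cells into $\mathcal{F}_A$ (boundary meeting $A$) and $\mathcal{F}_B$ (boundary meeting $B$); strong regularity ensures this is a genuine partition (each 2-cell has at least three boundary vertices and at most $u,v$ in common with the other side) and ensures that any edge shared by an $\mathcal{F}_A$-cell and an $\mathcal{F}_B$-cell is the edge $uv$. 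Then the closed sets $\bigcup\mathcal{F}_A$ and $\bigcup\mathcal{F}_B$ cover $S^2$ and intersect only in $\{u,v\}$ or the closed edge $\overline{uv}$, and since $S^2$ minus two points or a closed arc is connected, one of the two sides must be empty---a contradiction. I recommend replacing your excision argument with this covering argument.
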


We will sometimes want to discuss the local structure of a CW complex around a specific face. It will then be useful to consider the relative interior, star, and vertex figure, which we define below.

\begin{definition}
    Let $Z$ be a face of some CW complex. The \emph{relative interior} of $Z$, denoted $\operatorname{relint}Z$, is the set $Z\backslash\partial Z$.
\end{definition}

\begin{definition}
    Let $X$ be a pure, regular CW complex and $Z$ a face of $X$. The \emph{star} of $Z$, denoted $\operatorname{st}Z$, is the union of the relative interiors of all faces of $X$ containing $Z$.
\end{definition}

\begin{definition}
    Let $X$ be a pure, regular CW $d$-complex and $v$ a vertex of $X$. Let $U$ be an open neighborhood of $v$ in $X$ with $\operatorname{cl}U \subset \operatorname{st}v$ and with $Z \cap \operatorname{cl}U$ homeomorphic to a closed $k$-ball for all $k$-faces $Z\subseteq X$ containing $v$. The \emph{vertex figure} of $v$, denoted $X/v$, is the CW $(d-1)$-complex with $(k-1)$-faces $Z/v\coloneq Z \cap \operatorname{cl}U\backslash U$ for $k=1,\ldots,d$ and for $k$-faces $Z\subseteq X$ containing $v$.
\end{definition}

\begin{observation}
\label{pl}
    Suppose $X$ is a pure, regular CW complex and $v$ a vertex of $X$. If $X$ is strongly regular, then $X/v$ is strongly regular. If $X$ is a CW $3$-sphere, then $X/v$ is a CW $2$-sphere. If $X$ is a CW $3$-ball and $v \in \operatorname{int}X$ or $v \in \partial X$, then $X/v$ is respectively a CW $2$-sphere or $2$-ball (see \cite[p.~278]{Bjorner00}).
\end{observation}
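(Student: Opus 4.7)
The plan is to handle the three assertions separately: the first is purely combinatorial and the latter two are topological.

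For strong regularity, suppose $X$ is strongly regular and let $Z_1/v, Z_2/v$ be faces of $X/v$, arising from faces $Z_1, Z_2 \subseteq X$ that both contain $v$. Then
\[
(Z_1/v) \cap (Z_2/v) \;=\; (Z_1 \cap Z_2) \cap \operatorname{cl}U \setminus U.
\]
By strong regularity of $X$, the set $Z_1 \cap Z_2$ is a face of $X$, and since it contains $v$, the right-hand side is exactly $(Z_1 \cap Z_2)/v$, a face of $X/v$. Hence $X/v$ is strongly regular.

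For the topological claims, the plan is to exploit the fact that, by the definition of a CW $3$-sphere (resp.\ $3$-ball), the vertex $v$ has a neighborhood in $X$ homeomorphic to $\mathbb{R}^3$ when $v \in \operatorname{int}X$ and to a closed half-space of $\mathbb{R}^3$ when $v \in \partial X$. I would shrink $U$ so that $\operatorname{cl}U$ is a closed $3$-ball or a closed half-ball accordingly, which makes $\operatorname{cl}U \setminus U$ a $2$-sphere or a closed $2$-disk. The pieces $Z/v$ for $v \in Z$ then partition $\operatorname{cl}U \setminus U$ into a CW complex: after shrinking, each intersection $Z \cap \operatorname{cl}U$ becomes a closed cone from $v$ over $Z/v$, so $Z/v$ inherits the structure of a closed $(k-1)$-ball, and the incidence relations among these pieces are inherited directly from those in $X$. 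Combining these facts gives $X/v$ the structure of a CW $2$-sphere in the sphere case, a CW $2$-sphere when $v \in \operatorname{int}X$, and a CW $2$-ball when $v \in \partial X$.

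The main obstacle is the technical shrinking argument that guarantees $U$ can be chosen small enough for every face $Z \ni v$ to satisfy all the required properties simultaneously, i.e., $Z \cap \operatorname{cl}U$ is a closed $k$-ball and is also a cone from $v$ over $Z/v$ with matching gluing data. This relies on local finiteness of $X$ and regularity of its attaching maps, and is a standard PL-topological construction. I would cite Björner \cite[p.~278]{Bjorner00} for the full verification rather than reproduce it, which is what the statement of the observation already anticipates.
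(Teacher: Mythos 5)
Your proposal is correct and consistent with what the paper does: the paper gives no proof at all for this observation, deferring to Bj\"orner's survey, and your sketch simply fills in the details one would expect, with the topological claims resting on the same citation. The combinatorial argument for strong regularity is sound (the identity $(Z_1/v)\cap(Z_2/v)=(Z_1\cap Z_2)\cap\operatorname{cl}U\setminus U$ together with the fact that $Z_1\cap Z_2$ is a face of $X$ containing $v$ does the work), though you should note the degenerate case $Z_1\cap Z_2=\{v\}$, in which the right-hand side is the empty face because $v\in U$; this case is handled trivially but should be mentioned for completeness.
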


Our main work in this paper is on obtaining new CW complexes from old. The tools for this include subcomplexes and coning, defined next.

\begin{definition}
    Let $X$ be a CW complex. A \emph{subcomplex} of $X$ is a CW complex $W \subseteq X$ such that every face of $W$ is a face of $X$, and the set of faces of $W$ is closed under containment in $X$.
\end{definition}

\begin{definition}
    Let $X$ be a regular CW complex, $W$ a subcomplex of $X$, and $v$ a point outside $X$. To \emph{cone} over $W$ at $v$ is an operation wherein we add $v$ as a vertex of $X$, then attach to each nonempty $k$-face $Z \subseteq W$ a new $(k+1)$-face isomorphic to a pyramid with base $Z$ and apex $v$.
\end{definition}

\begin{observation}
    If we cone a strongly regular CW complex over any subcomplex, we obtain another strongly regular CW complex. If we cone a CW $d$-ball over its boundary, we obtain a CW $d$-sphere.
\end{observation}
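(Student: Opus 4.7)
The plan is to verify both claims directly from the definitions via a short case analysis.

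For the first claim, I would first check regularity: coning attaches, for each $k$-face $Z$ of $W$, a pyramid $v*Z$ of dimension $k+1$ along the boundary $Z \cup \bigcup_{Y \subsetneq Z}(v*Y)$, where each $v*Y$ has already been attached in the coning step for lower-dimensional faces. Each gluing map is the identity on $Z$ together with the natural homeomorphism from $\partial(v*Y)$ onto the previously-attached cone, so by induction on $k$ every gluing map is a homeomorphism, giving regularity. For the intersection property, I would argue by cases on two faces of the coned complex. If $F_1, F_2$ are both faces of $X$, then $F_1 \cap F_2$ is a face of $X$ by the strong regularity of $X$. If $F_1$ is a face of $X$ and $F_2 = v*Z$ is a new cone (so $Z \in W$), then $v \notin F_1$ gives $F_1 \cap (v*Z) = F_1 \cap Z$, which is a face of $X$ since $Z \in W \subseteq X$. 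If $F_1 = v*Z_1$ and $F_2 = v*Z_2$ are both new cones, then $F_1 \cap F_2 = v*(Z_1 \cap Z_2)$; since $Z_1 \cap Z_2$ is a face of $X$ and $W$ is closed under taking faces, we have $Z_1 \cap Z_2 \in W$, so $v*(Z_1 \cap Z_2)$ is a new face of the coned complex (and reduces to $\{v\}$ when the intersection is empty).

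For the second claim, I would use the standard topological fact that the cone on $S^{d-1}$ is homeomorphic to a closed $d$-ball, and that two $d$-balls glued along their common boundary $(d-1)$-sphere form a $d$-sphere. Concretely, when $X$ is a CW $d$-ball, $\partial X$ is a $(d-1)$-sphere, and the coning construction realizes the topological cone $v*\partial X$ as a $d$-ball attached to $X$ along $\partial X$; hence the resulting space $X \cup_{\partial X}(v*\partial X)$ is homeomorphic to $S^d$. Purity of the new complex follows because every face of $X$ is contained in a $d$-face of $X$ (by purity of $X$), while every new face $v*Z$ with $Z \in \partial X$ is contained in $v*F$ for any $(d-1)$-face $F \subseteq \partial X$ containing $Z$; such an $F$ exists by purity of $\partial X$, which as a CW $(d-1)$-sphere is itself pure.

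The main potential obstacle is tracking the recursive structure of the coning operation consistently with the recursive definition of a CW complex, in particular verifying that the gluing maps for the pyramids $v*Z$ are honest homeomorphisms onto subcomplexes already present. This is pure bookkeeping, however, since each pyramid is topologically a ball whose boundary decomposes cleanly into the base $Z$ and the lateral cones $v*Y$ for $Y \subsetneq Z$, all of which appear in the partial coned complex before $v*Z$ is attached.
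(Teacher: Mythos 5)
The paper states this observation without proof, so there is no argument in the text to compare against; your verification is a correct and reasonably complete unpacking of the definitions. The case analysis for strong regularity is exhaustive (including the degenerate intersection $\{v\}$), and the topological argument for the second claim—gluing the cone $v * \partial X$, a $d$-ball, to $X$ along their common boundary $(d-1)$-sphere—is the standard hemisphere decomposition of $S^d$. The one step you take on faith is that $\partial X$ of a (strongly regular) CW $d$-ball is itself a pure CW $(d-1)$-complex; the paper never spells this out for CW complexes, but it is a standard background fact for regular CW balls, so invoking it is appropriate here.
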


Hereafter, when we refer to any CW complex, ``strongly regular" will be implied unless otherwise indicated. We will typically also omit the word ``CW" and write \emph{$d$-sphere} to mean ``strongly regular CW $d$-sphere"; likewise \emph{$d$-ball}.

\subsection{Shellings}
\label{ss-shellings}

We now discuss shellings and dual shellings of polytopes and CW complexes. We use the definition of shellability given by Bj\"orner and Wachs in \cite{bjorner83}.

\begin{definition}
    Let $X$ be a pure CW $d$-complex with facets $Y_1, \ldots, Y_n$. The ordered set $(Y_1,\ldots,Y_n)$ is a \emph{shelling} of $X$ if either $d=0$, or $d>0$ and
    \begin{itemize}
        \item there exists a shelling of $\partial Y_1$,
        \item for $i=2, \ldots, n$, the intersection $Y_i \cap \bigcup_{j=1}^{i-1} Y_j$ is a pure $(d-1)$-complex, and there exists a shelling of $\partial Y_i$ beginning with the $(d-1)$-faces of $Y_i \cap \bigcup_{j=1}^{i-1} Y_j$.
    \end{itemize}
    We call $X$ \emph{shellable} if it admits a shelling.
\end{definition}

\begin{definition}
    Let $X$ be a pure CW $d$-complex with vertices $v_1,\ldots,v_n$. The ordered set $(v_1, \ldots, v_n)$ is a \emph{dual shelling} of $X$ if either $d=0$, or $d>0$ and
    \begin{itemize}
        \item there exists a dual shelling of $X/v_1$,
        \item for $i=2,\ldots,n$, the set $X/v_i \cap \bigcup_{j=1}^{i-1}\operatorname{st}v_j$ is a nonempty union of vertices of $X/v_i$ and their stars, and there exists a dual shelling of $X/v_i$ beginning with the vertices in $X/v_i \cap \bigcup_{j=1}^{i-1}\operatorname{st}v_j$.
    \end{itemize}
    We call $X$ \emph{dual shellable} if it admits a dual shelling.
\end{definition}

Bj\"orner and Wachs proved that a CW complex is shellable (respectively, dual shellable) if and only if its face poset is \emph{dual CL-shellable} (respectively, \emph{CL-shellable}) \cite[Corollary 4.4]{bjorner83}.

\begin{theorem}[Bruggesser and Mani]
\label{line-shelling}
    The boundary complex of any polytope is shellable.
\end{theorem}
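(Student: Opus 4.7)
The plan is to prove this via the classical \emph{line shelling} (or \emph{rocket shelling}) construction. Let $P \subset \mathbb{R}^d$ be the polytope, and choose a line $\ell$ in general position with respect to $P$: $\ell$ meets the affine hull of each facet in a single point, and these $f_{d-1}$ intersection points are pairwise distinct and distinct from the two points where $\ell$ crosses $\partial P$. Pick a starting point $x_0$ just outside $P$, lying on $\ell$ slightly beyond some facet $F_1$, and imagine $x$ moving along $\ell$ away from $P$, passing through infinity (viewing $\ell$ projectively), and returning along the other side back to $x_0$.

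At each position of $x$ not on any facet hyperplane, call a facet $F$ \emph{visible} from $x$ if the segment from $x$ to a relative interior point of $F$ meets $P$ only in $F$. Each time $x$ crosses the hyperplane of some facet $F_i$, the visibility of $F_i$ flips and no other visibility changes. List the facets of $P$ in the following order: first, the facets in the order their hyperplanes are crossed during the outbound leg (each such facet becomes visible as $x$ passes through infinity toward $F_1$); second, the remaining facets in the order their hyperplanes are crossed on the inbound leg (each such facet becomes invisible). Equivalently, this is the order in which facets are ``added'' to the visible region of $\partial P$ as seen from $x$.

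To verify this is a shelling in the sense of Bj\"orner--Wachs, we argue inductively on $d$. For each $i \geq 2$, let $U_i = F_i \cap \bigcup_{j<i} F_j$. When $F_i$ is added on the outbound leg, $U_i$ is the ``near side'' of $\partial F_i$ visible from the point $y_i = \ell \cap \operatorname{aff}F_i$, a topological $(d-2)$-ball; when $F_i$ is added on the inbound leg, $U_i$ is the complementary ``far side,'' a $(d-2)$-ball as well (for $i<n$) or all of $\partial F_i$ (for $i=n$, handled as the final step). In either case, $U_i$ is a pure $(d-2)$-complex. To finish, we need a shelling of $\partial F_i$ starting with the facets of $U_i$: this is obtained by applying the line-shelling construction recursively \emph{inside} the hyperplane of $F_i$, using the projection of $\ell$ (or rather, a suitable line in $\operatorname{aff}F_i$ chosen so that the ridges of $F_i$ cross its hyperplanes in the correct order). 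For the base case $d=1$, $P$ is a segment and shellability of its two-point boundary is trivial.

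The main obstacle is the inductive step: one must argue carefully that the line-shelling of $\partial F_i$ starts exactly with the ridges in $U_i$. This requires identifying the correct auxiliary line in $\operatorname{aff}F_i$ — essentially, one uses the line through $y_i$ ``pulled back'' from $\ell$, or, in Bruggesser--Mani's original formulation, observes that $y_i$ itself plays the role of a point-at-infinity for a shelling of $F_i$ viewed as a $(d-1)$-polytope. Genericity of $\ell$ ensures that the induced line in $\operatorname{aff}F_i$ is itself generic with respect to $F_i$, so the inductive hypothesis applies. Once this compatibility is established, the shelling conditions are verified facet by facet and the theorem follows.
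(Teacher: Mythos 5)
The paper states this theorem without proof, citing Bruggesser and Mani \cite{bruggesser71} and giving only a two-sentence description of line shellings; your sketch is a correct expansion of exactly that argument. You rightly flag that the crux is verifying that the induced line shelling of each $\partial F_i$ begins precisely with the ridges in $U_i$ (equivalently, the ridges of $F_i$ visible, or invisible, from $y_i = \ell \cap \operatorname{aff}F_i$), which is indeed where the technical work in Bruggesser--Mani lies; your proposal is a faithful outline but not a complete proof of that step.
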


Bruggesser and Mani proved Theorem \ref{line-shelling} in \cite{bruggesser71} using the technique of \emph{line shellings}. To obtain a line shelling, we move away from our polytope in a straight line and begin listing facets in the order they become visible. We then approach along the same line, from the opposite direction, and list the remaining facets in the order they become obscured.

Under polarization, a shelling of any polytope becomes a dual shelling of the dual polytope. A line shelling becomes a \emph{sweep}, a dual shelling induced by a linear functional \cite[p.~142a]{grunbaum03}.

\begin{corollary}
\label{vertex-star}
    Let $P$ be a polytope and $v$ a vertex of $P$. Then there exists a shelling of $\partial P$ beginning (or ending) with the facets of $P$ that contain $v$.
\end{corollary}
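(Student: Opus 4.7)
The plan is to apply Bruggesser and Mani's line shelling (Theorem~\ref{line-shelling}), choosing the line to pass close to $v$ so that the facets containing $v$ cluster at one end of the shelling.

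\emph{Choice of starting point.} Since $v$ is a vertex of $P$, the normal cone $N_v$ spanned by the outward unit normals $\{n_F : F \ni v\}$ is pointed and full-dimensional; its polar $N_v^*$ then also has nonempty interior. Pick $d \in \operatorname{int}(N_v^*)$, so $\langle d, n_F\rangle > 0$ for every facet $F \ni v$, and set $x = v + \varepsilon d$ for $\varepsilon > 0$ small. For $F \ni v$ with supporting hyperplane $\{y : \langle y, n_F\rangle = c_F\}$, one has $\langle v, n_F\rangle = c_F$ and so $\langle x, n_F\rangle = c_F + \varepsilon\langle d, n_F\rangle > c_F$, placing $x$ outside that hyperplane. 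For $F \not\ni v$ the strict inequality $\langle v, n_F\rangle < c_F$ survives a small perturbation, placing $x$ inside. Hence the facets of $P$ visible from $x$ are exactly those containing $v$.

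\emph{Choice of line.} Pick a generic direction $w$ in the interior of the tangent cone of $P$ at $v$, and set $\ell = \{x + tw : t \in \mathbb{R}\}$. Since $w$ points into $P$, $\ell$ enters $P$ at some time $a = O(\varepsilon)$ through a facet containing $v$ and exits at some later time $b$. A direct computation gives, for each $F \ni v$, crossing time $t_F = \varepsilon\langle d, n_F\rangle / |\langle w, n_F\rangle| \in (0, a]$, with equality only for the entry facet. For each $F \not\ni v$, the crossing time $t_F$ is bounded away from $0$ by a constant depending only on $P$ and $w$, so for $\varepsilon$ small enough every such $t_F$ lies either above $b$ or below some fixed $-\delta < 0$.

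\emph{Reading off the shelling.} Bruggesser and Mani's shelling for $\ell$ lists the exit facet, then the facets with $t_F \in (b,+\infty)$ in increasing order of $t_F$, then the facets with $t_F \in (-\infty, a)$ in increasing order of $t_F$, and finally the entry facet. By the previous paragraph, the crossings with $t_F \in (0, a]$ are precisely the facets containing $v$, and they appear consecutively at the very end of this list, giving a shelling ending with them. Reversing the traversal of $\ell$ reverses the shelling and produces one beginning with them.

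The main obstacle is the quantitative ordering in the second paragraph: once the pointedness of $N_v$ supplies a suitable direction $d$, one must verify that small $\varepsilon$ and generic $w$ really do separate the crossings of facets through $v$ (all $O(\varepsilon)$) from the crossings of other facets (bounded away from $0$). With that in hand, the corollary drops out of Theorem~\ref{line-shelling}.
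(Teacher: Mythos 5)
Your proof is correct and follows exactly the approach the paper takes: the paper does not reprove the corollary but cites Bruggesser--Mani's Proposition~2 and Ziegler's Corollary~8.13, which obtain it from a line shelling anchored at a point beyond $v$. Your explicit choice of $x=v+\varepsilon d$ and the verification that the crossing times $t_F$ for facets through $v$ (all $O(\varepsilon)$) separate, for small $\varepsilon$ and generic $w$, from the crossing times of the remaining facets (bounded away from zero) is a correct and careful filling-in of that standard argument.
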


The above follows from Bruggesser and Mani's line shellings; see \cite[Proposition 2]{bruggesser71}\cite[Corollary 8.13]{ziegler95}.

\begin{corollary}
\label{shel-dual-shel}
    The boundary complex of any $(d+1)$-polytope is a shellable, dual shellable $d$-sphere.
\end{corollary}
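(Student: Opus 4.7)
The plan is to split the corollary into two independent claims, one for each property, and derive each from Theorem \ref{line-shelling} applied to $P$ itself or to its dual polytope $P^*$.

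For the shellability claim, there is essentially nothing to do beyond invoking Theorem \ref{line-shelling}, since the boundary complex of any $(d+1)$-polytope is a strongly regular CW $d$-sphere (this was already noted in \S\ref{ss-cw} immediately before the statement of Steinitz's theorem). So the first paragraph of the proof would just say: let $P$ be a $(d+1)$-polytope; then $\partial P$ is a $d$-sphere, and by Theorem \ref{line-shelling}, $\partial P$ admits a shelling.

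For dual shellability, the key idea is to use polar duality. I would take the polar dual $P^*$, which is itself a $(d+1)$-polytope, and apply Theorem \ref{line-shelling} to obtain a shelling $(Y_1,\ldots,Y_n)$ of $\partial P^*$. Under polarity, the facet $Y_i$ of $P^*$ corresponds to a vertex $v_i$ of $P$, and the face lattice of $\partial Y_i$ is anti-isomorphic to the face lattice of the vertex figure $P/v_i$ (which, under the identification of faces of $P$ containing $v_i$ with faces of $P^*$ contained in $Y_i$, corresponds to $\partial Y_i$ with reversed dimensions). I would then verify that this correspondence transforms the shelling conditions for $\partial P^*$ (facets $Y_i$, intersections $Y_i \cap \bigcup_{j<i} Y_j$, and shellings of $\partial Y_i$ starting with specified $(d-1)$-faces) into the dual shelling conditions for $\partial P$ (vertices $v_i$, stars $\operatorname{st}v_j$ intersecting $P/v_i$ in a union of vertex-stars, and dual shellings of $P/v_i$ starting with specified vertices), proceeding by induction on $d$.

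The main obstacle, and really the only substantive content, is checking the translation of the intersection condition under polarity: that a pure $(d-1)$-subcomplex of $\partial Y_i$ consisting of facets of $Y_i$ shared with earlier $Y_j$ corresponds exactly to a union of vertices (and their stars) in $P/v_i$ coming from vertices $v_j$ with $j<i$ adjacent to $v_i$. This is a routine but careful check, given that the vertex figure $P/v_i$ has face lattice anti-isomorphic to that of $Y_i$, with $k$-faces of $P/v_i$ matching $(d-1-k)$-faces of $Y_i$; I would unwind the definitions one dimension at a time and appeal to Observation \ref{pl} to know that $P/v_i$ has the right topology. Once this correspondence is established, the induction goes through immediately, since the base case $d=0$ is trivial and the inductive step reduces dual shellability of each $P/v_i$ to shellability of the corresponding $\partial Y_i$, which Theorem \ref{line-shelling} provides.
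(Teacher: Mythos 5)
Your proposal matches the paper's implicit argument: the paper derives this corollary directly from Theorem \ref{line-shelling} together with the remark immediately preceding Corollary \ref{vertex-star} that under polarization a shelling of a polytope becomes a dual shelling of the dual polytope. You flesh out the polarity translation in more detail than the paper (which simply asserts it), but the decomposition into shellability-of-$\partial P$ and shelling-of-$\partial P^*$-via-polarity is identical.
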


The following three lemmas concern the shelling properties of 2- and 3-spheres. We include their proofs for completeness.

\begin{lemma}
\label{2-sphere-shelling}
    Let $S$ be a 2-sphere, and let $W$ be the union of a proper, nonempty subset of the facets of $S$. Then there exists a shelling of $S$ beginning with the facets of $W$ if and only if $W$ is homeomorphic to a closed disc.
\end{lemma}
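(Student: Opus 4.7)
I plan to induct on $k = |W|$. The base case $k=1$ is immediate, as each facet of $S$ is a closed 2-disc. For the inductive step, assume $W' = Y_1 \cup \cdots \cup Y_{k-1}$ is a disc; I show $W = W' \cup Y_k$ is a disc. The shelling condition forces $Y_k \cap W'$ to be the initial segment of some shelling of the 1-sphere $\partial Y_k$, hence a connected arc of consecutive edges on $\partial Y_k$. Since $S$ is a closed 2-manifold, each edge of this intersection is shared by $Y_k$ and exactly one facet of $W'$, so it lies on the boundary circle $\partial W'$. Thus we are attaching the disc $Y_k$ to the disc $W'$ along an arc lying in both boundary circles, which yields another closed disc. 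The hypothesis $k<n$ ensures the arc is proper, so the result really is a disc and not the full 2-sphere.

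\textbf{Reverse direction (disc $\Rightarrow$ shelling).} Suppose $W$ is a closed disc. Applying the Jordan--Sch\"onflies theorem to the 1-sphere $\partial W \subset S$, the complementary closure $W^c := \overline{S \setminus W}$ is also a closed disc with $\partial W^c = \partial W$. I plan to concatenate a shelling of $W$ with a shelling of $W^c$, joined across a common boundary edge. The key auxiliary input is the following strengthened disc-shellability lemma:
\emph{every strongly regular CW 2-disc $D$ admits, for any edge $e \in \partial D$, a shelling $(Z_1, \ldots, Z_m)$ such that $Z_1 \cap \partial D$ is a connected arc containing $e$ and each partial union $Z_1 \cup \cdots \cup Z_i$ meets $\partial D$ in a connected arc.}
Its reverse gives a shelling whose final facet meets $\partial D$ in a connected arc containing a prescribed edge. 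I plan to prove the lemma by induction on the number of facets: in any 2-disc with at least two facets, one can find a facet meeting $\partial D$ in a connected arc (chosen to avoid $e$) whose removal leaves a smaller 2-disc, and one recurses.

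\textbf{Assembly and main obstacle.} Fix any edge $e \subset \partial W = \partial W^c$. Shell $W$ to end with a facet $Y_k$ meeting $\partial W$ in an arc containing $e$, and shell $W^c$ to begin with a facet $Y_{k+1}$ meeting $\partial W^c$ in an arc containing $e$, in the controlled manner above. Then the combined region $W \cup Y_{k+1} \cup \cdots \cup Y_{i-1}$ is the union of two closed discs along a common boundary arc, hence itself a closed disc, and the 2-manifold argument of the forward direction shows $Y_i \cap \bigcup_{j<i} Y_j$ is a connected arc of $\partial Y_i$, yielding the shelling condition at every step. The main obstacle is precisely the strengthened disc-shellability lemma: the requirement that the partial unions keep connected contact with $\partial D$ throughout the shelling of $W^c$ is what prevents a direct concatenation from breaking, and the naive induction on number of facets does not enforce this. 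Refining the induction to track the boundary contact (essentially, shelling the disc ``outward'' from the prescribed arc) is the technical heart of the argument.
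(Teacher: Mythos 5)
Your reverse direction has a genuine gap, and you have accurately identified it yourself. The entire argument rests on the ``strengthened disc-shellability lemma'' (shell any strongly regular CW 2-disc outward from a prescribed boundary edge while maintaining connected boundary contact), which you state but do not prove, explicitly calling it the technical heart. That lemma is not easy: strong regularity does \emph{not} prevent a facet from meeting $\partial D$ in a disconnected subcomplex (e.g.\ triangulate a pentagon $v_1\ldots v_5$ with interior diagonals $v_1v_3$ and $v_3v_5$; the triangle $v_1v_3v_5$ meets $\partial D$ in the edge $v_5v_1$ together with the isolated vertex $v_3$), so a naive ``peel off a facet meeting $\partial D$ in an arc'' induction really does need the extra bookkeeping you flag, and the assembly step then has to reconcile $Y_i\cap\partial W$ with the internal shelling intersection $Y_i\cap(Y_{k+1}\cup\cdots\cup Y_{i-1})$, which your sketch does not do. Until that lemma is proved, the reverse implication is not established.

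The paper sidesteps all of this by a different route. It cones $W$ over its boundary at a new vertex $v$ and cones $\operatorname{cl}(S\setminus W)$ over its boundary at a new vertex $v'$, obtaining two 2-spheres $Q,Q'$; by Steinitz's theorem (Theorem~\ref{Steinitz}) these are boundaries of 3-polytopes, and Corollary~\ref{vertex-star} (Bruggesser--Mani line shellings started or ended at a vertex) produces a shelling of $Q$ beginning with the facets of $W$ and a shelling of $Q'$ ending with the facets of $\operatorname{cl}(S\setminus W)$. Concatenating the $W$-part of the first with the $\operatorname{cl}(S\setminus W)$-part of the second gives the desired shelling of $S$, because for a facet $T'_i\subset\operatorname{cl}(S\setminus W)$ the intersection $T'_i\cap W$ and $T'_i\cap(\text{cone over }\partial W\text{ at }v')$ are both equal to $T'_i\cap\partial W$, so the shelling condition inherited from $Q'$ transfers verbatim. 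In short, the paper buys the hard combinatorial step for free from polytope theory; your plan tries to reprove it by elementary induction, which is a legitimate goal but requires substantially more work than you have done. (Your forward direction is a reasonable self-contained sketch of a fact the paper simply cites from Bj\"orner--Wachs, and is not where the problem lies.)
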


\begin{proof}
    If there exists a shelling of $S$ beginning with the facets of $W$, then $W$ is homeomorphic to a closed disc by \cite[Proposition 4.3]{bjorner84}.

    Conversely, suppose $W$ is homeomorphic to a closed disc. Let $Q$ be the 2-sphere obtained from $W$ by adding a new vertex $v$ and coning over the boundary of $W$ at $v$. Let $Q'$ be the 2-sphere obtained from $\operatorname{cl}(S\backslash W)$ by adding a new vertex $v'$ and coning over the boundary of $\operatorname{cl}(S\backslash W)$ at $v'$. By Theorem \ref{Steinitz}, $Q$ and $Q'$ are each isomorphic to the boundary complex of some 3-polytope.

    By Corollary \ref{vertex-star}, there exists a shelling $(T_1, \ldots, T_m, \ldots, T_n)$ of $Q$ such that $T_1, \ldots, T_m$ are the facets of $W$. Also by Corollary \ref{vertex-star}, there exists a shelling $(T'_1, \ldots, T'_{m'}, \ldots, T'_{n'})$ of $Q'$ such that $T'_{m'+1}, \ldots, T'_{n'}$ are the facets of $\operatorname{cl}(S\backslash W)$.

    Since $(T_1,\ldots,T_n)$ is a shelling of $Q$, for $i=2,\ldots,m$, there exists a shelling of $\partial T_i$ beginning with the edges of $T_i\cap\bigcup_{j=1}^{i-1}T_j$. Meanwhile, since $(T'_1, \ldots, T'_{n'})$ is a shelling of $Q'$, for $i=m'+1, \ldots, n'$, there exists a shelling of $\partial T'_i$ beginning with the edges of
    \[
        T'_i \cap \bigcup_{j=1}^{i-1}T'_j = T'_i \cap \left(\bigcup_{j=1}^m T_j \cup \bigcup_{j=m'+1}^{i-1}T'_j\right).
    \]
    Thus, by definition, $(T_1, \ldots, T_m, T'_{m'+1}, \ldots, T'_{n'})$ is a shelling of $S$ beginning with the facets of $W$.
\end{proof}

The next lemma follows by duality.

\begin{lemma}
\label{2-sphere-dual-shelling}
    Let $S$ be a 2-sphere, and let $U$ be the union of a proper, nonempty subset of the vertices of $S$ and their stars. Then there exists a dual shelling of $S$ beginning with the vertices in $U$ if and only if $U$ is homeomorphic to an open disc.
\end{lemma}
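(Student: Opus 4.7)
The plan is to deduce this lemma from Lemma \ref{2-sphere-shelling} via polar duality. By Theorem \ref{Steinitz}, realize $S$ as the boundary complex $\partial P$ of a 3-polytope $P$, and let $S^* = \partial P^*$ denote the 2-sphere bounding the polar dual. Polar duality gives an inclusion-reversing bijection between faces of $S$ and faces of $S^*$ under which the vertices $v_1,\ldots,v_m$ in $U$ correspond to facets $F_1,\ldots,F_m$ of $S^*$; set $W = \bigcup_{i=1}^m F_i$.

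As noted just after Theorem \ref{line-shelling}, polarization carries shellings to dual shellings and vice versa. Hence a dual shelling of $S$ beginning with $v_1,\ldots,v_m$ exists if and only if a shelling of $S^*$ beginning with $F_1,\ldots,F_m$ exists, and by Lemma \ref{2-sphere-shelling} applied to $S^*$ and $W$, this is equivalent to $W$ being homeomorphic to a closed disc. It therefore remains to verify that $U$ is homeomorphic to an open disc in $S$ if and only if $W$ is homeomorphic to a closed disc in $S^*$.

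The remaining step is topological. Since $S$ and $S^*$ are 2-spheres, this reduces to showing that the closed subcomplex $S\setminus U$ of $S$ (the faces of $S$ that have no $v_i$ among their vertices) is homeomorphic to a closed disc if and only if the open region $S^*\setminus W$ of $S^*$ is homeomorphic to an open disc. Identifying $|S|$ with $|S^*|$ via the common barycentric subdivision, one checks that a subdivision simplex with chain $F_0 \subsetneq \cdots \subsetneq F_k$ lies in $S\setminus U$ precisely when $F_k$ contains no $v_i$, and lies in $S^*\setminus W$ precisely when $F_0$ contains no $v_i$. This exhibits $S^*\setminus W$ as a regular open neighborhood of $S\setminus U$ on the sphere, onto which it deformation retracts by sliding each subdivision simplex along its chain to the longest initial segment that avoids $V' = \{v_1,\ldots,v_m\}$.

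The main obstacle is polishing this last deformation retract. The combinatorial heart---that both $U$ and $W$ are determined entirely by the position of $V'$ in the face lattice---is transparent, so the remaining work lies in cleanly relating the topological types of $S\setminus U$ and $S^*\setminus W$ on the sphere. Alternatively, one might sidestep the barycentric subdivision entirely and mimic the proof of Lemma \ref{2-sphere-shelling} directly, replacing coning over boundary components by its dual operation on vertex figures; but I expect the duality route above to be shorter.
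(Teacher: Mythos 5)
Your duality setup (realize $S=\partial P$ by Steinitz, pass to $S^*=\partial P^*$, and invoke Lemma~\ref{2-sphere-shelling}) is exactly the route the paper intends by ``follows by duality,'' but the remaining topological step is where the argument has a genuine gap. You reduce matters to showing ``$U$ is an open disc in $S$ iff $W$ is a closed disc in $S^*$,'' and you attempt this via a deformation retraction of $S^*\setminus W$ onto $S\setminus U$ in the common barycentric subdivision. That retraction only establishes a homotopy equivalence, and homotopy type cannot detect whether a compact planar subcomplex is a disc: a contractible subcomplex of a $2$-sphere need not be a disc (e.g.\ two $2$-cells joined at a single vertex, or a disc with a pendant edge). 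So from ``$U$ is an open disc'' your argument delivers at most that $W$ is contractible, which is strictly weaker than ``$W$ is a closed disc,'' and that direction is the one the application actually needs.

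Worse, the biconditional you are after is false when $S$ has non-triangular facets. Let $S=\partial(\text{cube})$ and let $v_1,v_2$ be opposite corners of one square facet $F$, so $U=\operatorname{st}v_1\cup\operatorname{st}v_2$ and $\operatorname{st}v_1\cap\operatorname{st}v_2=\operatorname{relint}F$. Then $U$ is open, connected, and simply connected (van Kampen along the contractible intersection), hence an open disc. On the dual side, $F_{v_1},F_{v_2}$ are two triangles of $\partial(\text{octahedron})$ meeting in the single vertex $F^*$, so $W=F_{v_1}\cup F_{v_2}$ is a wedge and not a closed disc; accordingly $F_{v_1}\cap F_{v_2}$ is a point rather than a pure $1$-complex, so no shelling of $S^*$ can begin with $F_{v_1},F_{v_2}$. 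One can also check directly from the paper's recursive definition that no dual shelling of $S$ begins with $v_1,v_2$: the set $S/v_2\cap\operatorname{st}v_1$ is the relative interior of one edge of the cycle $S/v_2$, which is not a union of vertex stars. So ``$U$ is an open disc'' is not equivalent to ``$W$ is a closed disc''; the correct combinatorial characterization must additionally encode that, for every facet $F$ of $S$, the selected vertices on $F$ are cyclically consecutive. Your proof (and, frankly, the lemma as literally stated) needs this extra hypothesis or a sharper translation of ``$W$ is a disc'' back into $S$.
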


From Lemmas \ref{2-sphere-shelling}-\ref{2-sphere-dual-shelling}, we obtain the following.

\begin{lemma}
\label{3-sphere-shelling}
    Let $S$ be a 3-sphere with vertices $v_1,\ldots,v_n$ and facets $T_1,\ldots,T_m$.
    \begin{enumerate}[label=\roman*.]
        \item $(T_1,\ldots,T_m)$ is a shelling of $S$ if and only $T_i \cap \bigcup_{j=1}^{i-1}T_j$ is homeomorphic to a closed disc for $i=2,\ldots,m-1$.
        \item $(v_1,\ldots,v_n)$ is a dual shelling of $S$ if and only if $X/v_i \cap \bigcup_{j=1}^{i-1}\operatorname{st}v_j$ is a union of vertices of $X/v_i$ and their stars, and is homeomorphic to an open disc, for $i=2,\ldots,n-1$.
    \end{enumerate}
\end{lemma}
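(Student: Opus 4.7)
The plan is to verify the shelling (resp. dual shelling) definition clause by clause, pushing each recursive check down from dimension three to dimension two and then applying Lemma \ref{2-sphere-shelling} (resp. Lemma \ref{2-sphere-dual-shelling}). Since $S$ is a 3-sphere, every facet $T_i$ is a closed 3-ball, so $\partial T_i$ is a 2-sphere by Observation \ref{pl}; similarly each vertex figure $S/v_i$ is a 2-sphere. This is what makes the reduction possible.

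For part (i), I would separately handle $i=1$, $i=m$, and the middle range $2 \le i \le m-1$. At $i=1$, the shelling definition only requires that $\partial T_1$ be shellable, which follows from Steinitz (Theorem \ref{Steinitz}) together with Bruggesser--Mani (Theorem \ref{line-shelling}). At $i=m$, the key observation is that every 2-face of the 3-sphere $S$ lies in exactly two facets, so every 2-face of $T_m$ is shared with some earlier $T_j$; hence $T_m \cap \bigcup_{j<m} T_j = \partial T_m$, and the condition reduces to mere existence of a shelling of $\partial T_m$, again supplied by Theorems \ref{Steinitz} and \ref{line-shelling}. For $2 \le i \le m-1$, the intersection $T_i \cap \bigcup_{j<i} T_j$ is a proper subcomplex of the 2-sphere $\partial T_i$, and Lemma \ref{2-sphere-shelling} equates the existence of the required partial shelling of $\partial T_i$ with the condition that this intersection is homeomorphic to a closed disc. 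A two-dimensional subcomplex homeomorphic to a closed disc is automatically a pure 2-complex, so the ``pure $(d-1)$-complex'' clause of the shelling definition is included for free.

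Part (ii) proceeds by the completely analogous case analysis, with $S/v_i$ playing the role of $\partial T_i$ and Lemma \ref{2-sphere-dual-shelling} replacing Lemma \ref{2-sphere-shelling}. The analog of $i=m$ is $i=n$: every face of $S$ that properly contains $v_n$ carries at least one additional vertex $v_j$ with $j<n$ (since cells of positive dimension have more than one vertex), so $\bigcup_{j<n} \operatorname{st} v_j$ covers all of $S/v_n$ and the $i=n$ condition reduces to existence of a dual shelling of $S/v_n$, which exists by Theorems \ref{Steinitz} and \ref{line-shelling} together with Corollary \ref{shel-dual-shel}.

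I do not anticipate any serious obstacle. The only mildly delicate point is the endpoint case $i=m$ (and its dual $i=n$), which hinges on the structural fact that every 2-face of a 3-sphere is shared by exactly two facets (and its vertex-figure analog). Everything else is a transparent translation of the 2-sphere shelling lemmas, with Observation \ref{pl} doing the work of identifying the relevant 2-spheres.
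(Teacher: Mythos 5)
Your proposal is correct and takes the same route the paper intends: the paper states this lemma with no proof, remarking only that it follows from Lemmas \ref{2-sphere-shelling}--\ref{2-sphere-dual-shelling}, and your reduction of each recursive clause of the (dual) shelling definition to a statement about the 2-sphere $\partial T_i$ (resp.\ $S/v_i$), combined with the endpoint cases $i=1,m$ (resp.\ $i=1,n$) handled via Theorems \ref{Steinitz} and \ref{line-shelling} and Corollary \ref{shel-dual-shel}, is the intended filling-in. One detail you assert but do not justify: in the forward direction, to invoke Lemma \ref{2-sphere-shelling} (resp.\ Lemma \ref{2-sphere-dual-shelling}) for $2\le i\le m-1$ (resp.\ $2\le i\le n-1$) you need the intersection to be a \emph{proper} nonempty subcomplex of the 2-sphere; nonemptiness is built into the (dual) shelling definition, and for properness note that if $T_i\cap\bigcup_{j<i}T_j$ equalled all of $\partial T_i$ then $\bigcup_{j\le i}T_j$ would be a closed 3-manifold contained in $S$, hence all of $S$, forcing $i=m$ --- this is essentially the same content as \cite[Proposition~4.3]{bjorner84}, already cited in the proof of Lemma \ref{2-sphere-shelling}. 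With that sentence added, the argument is complete.
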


\subsection{Fatness}
\label{ss-fatness}

Lastly, we introduce $f$-vectors and the fatness and complexity parameters for polytopes and complexes.

\begin{definition}
    Let $X$ be a $(d+1)$-polytope, polytopal $d$-complex, or CW $d$-complex. We define $f_k(X)$ as the number of $k$-faces of $X$ for $k=0, \ldots, d$. We define the \emph{$f$-vector} of $X$ as $f(X) = (f_0(X),\ldots,f_d(X))$.
\end{definition}

\begin{definition}
    Let $X$ be a 4-polytope, polytopal 3-complex, or CW 3-complex with $f(X)=(f_0,f_1,f_2,f_3)$. The \emph{fatness} of $X$ is defined as
    \[
        \mathcal{F}(X)=\frac{f_1+f_2-20}{f_0+f_3-10}.
    \]
\end{definition}

\begin{definition}
    Let $X$ be a 4-polytope, polytopal 3-complex, or CW 3-complex with $f(X)=(f_0,f_1,f_2,f_3)$. Let $f_{03}$ be the number of vertex-facet incidences in $X$. The \emph{complexity} of $X$ is defined as
    \[
        \mathcal{C}(X)=\frac{f_{03}-20}{f_0+f_3-10}.
    \]
\end{definition}

The fatness and complexity of spheres are asymptotically equivalent. Ziegler gives the following in \cite{ziegler02}.

\begin{proposition}[Ziegler]
    For all 3-spheres $S$ not isomorphic to the boundary of the 4-simplex,
    \[
        \mathcal{C}(S) \leq 2\mathcal{F}(S)-2,\qquad\mathcal{F}(S) \leq 2\mathcal{C}(S)-2.
    \]
\end{proposition}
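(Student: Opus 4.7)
The plan is to derive both inequalities from two Dehn--Sommerville-style identities together with a handful of local lower bounds that follow from strong regularity. Since every facet $F$ of $S$ is a closed 3-ball whose boundary is a 2-sphere, Euler's relation gives $f_0(\partial F) - f_1(\partial F) + f_2(\partial F) = 2$; summing over facets and using that every 2-face of $S$ lies in exactly two facets yields
\[
    f_{03} - f_{13} + 2f_2 = 2f_3. \tag{$\ast$}
\]
Applying the same argument to the vertex figures $S/v$, which are 2-spheres by Observation \ref{pl}, gives
\[
    2f_1 - f_{02} + f_{03} = 2f_0. \tag{$\ast\ast$}
\]

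Next I would record three local lower bounds. First, each vertex of a 3-polytope has degree at least $3$, so $2f_1(\partial F) \geq 3f_0(\partial F)$ for every facet $F$. Second, every 2-face is a polygon with at least $3$ vertices, so $f_{02} \geq 3f_2$. Third, the link of every edge in a strongly regular CW 3-sphere is a polygon of length at least $3$, so $f_{13} \geq 3f_1$. To obtain $\mathcal{C}(S) \leq 2\mathcal{F}(S) - 2$, I combine the first bound with Euler on $\partial F$ to get $2f_2(\partial F) \geq f_0(\partial F) + 4$, then sum over facets to obtain $4f_2 \geq f_{03} + 4f_3$. The global Euler relation $f_0 - f_1 + f_2 - f_3 = 0$ gives $4f_2 - 4f_3 = 2f_1 + 2f_2 - 2f_0 - 2f_3$, so $f_{03} \leq 2(f_1+f_2) - 2(f_0+f_3)$. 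Since $S$ is not the boundary of the 4-simplex, $f_0 + f_3 > 10$; subtracting 20 from both sides and dividing by $f_0 + f_3 - 10$ yields the desired inequality. For $\mathcal{F}(S) \leq 2\mathcal{C}(S) - 2$, I add ($\ast$) and ($\ast\ast$) to get
\[
    2f_{03} = f_{13} + f_{02} - 2f_1 - 2f_2 + 2f_0 + 2f_3,
\]
then apply $f_{13} \geq 3f_1$ and $f_{02} \geq 3f_2$ to obtain $2f_{03} \geq f_1 + f_2 + 2f_0 + 2f_3$. The same manipulation (subtract constants, divide by $f_0+f_3-10$) produces the claim.

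There is no substantive obstacle; the difficulty is only bookkeeping. One must keep straight which $f$-numbers are being counted in $S$ globally, on the boundary of a single facet, or on a single vertex figure, and verify that strong regularity genuinely supplies the three local bounds (so that 2-faces are honest polygons, edge links are cycles of length $\geq 3$, and each $\partial F$ is a 2-sphere to which 3-polytopal inequalities apply). Once these are lined up, each half of the proposition reduces to a one-line identity plus a substitution using global Euler.
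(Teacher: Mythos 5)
The paper does not prove this proposition; it simply records it as a result of Ziegler, with a citation to \cite{ziegler02}, so there is no in-paper proof for you to match. Your argument is nevertheless correct and, for what it's worth, is the standard derivation: the two Euler relations on facets and on vertex figures are exactly the flag-vector identities Ziegler uses, and the local bounds (vertex degrees $\geq 3$ in $\partial F$, polygons have $\geq 3$ vertices, edge links have length $\geq 3$) are the correct consequences of strong regularity (each $\partial F$ is a strongly regular CW 2-sphere, hence polytopal by Steinitz, and a 2-gon or a length-2 edge link would force two faces to intersect in more than a face). The arithmetic in both halves checks out, including the substitution $4f_2 - 4f_3 = 2f_1 + 2f_2 - 2f_0 - 2f_3$ via global Euler, and the final division is legitimate because a 3-sphere with $f_0 + f_3 = 10$ is necessarily the boundary of the 4-simplex, which the hypothesis excludes.
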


The next two theorems represent the state of the art on fat polytopes and spheres.

\begin{theorem}[Ziegler]
    There exist 4-polytopes with fatness $9-\varepsilon$ for arbitrarily small $\varepsilon$.
\end{theorem}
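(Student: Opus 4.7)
The plan is to construct, for each sufficiently large $k$ and polygon size $r$, a $4$-polytope $P_{k,r}$ with $\mathcal{F}(P_{k,r}) \to 9$ as $k,r \to \infty$. Following the \emph{deformed product of polygons} strategy mentioned in the introduction, $P_{k,r}$ would be the image of a high-dimensional polytope $Q \subset \mathbb{R}^{2k}$ under a linear projection $\pi \colon \mathbb{R}^{2k} \to \mathbb{R}^{4}$ engineered so that all vertices, edges, and polygon $2$-faces of $Q$ survive to $\partial P_{k,r}$.

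First I would construct $Q$ as a deformed product of $k$ convex $r$-gons $P_1,\ldots,P_k$. Combinatorially $Q$ is equivalent to the Cartesian product $P_1 \times \cdots \times P_k$, but its facet-defining inequalities are perturbed so that a carefully selected subfamily of facet normals lies very close to $\ker\pi$. Under the Projection Lemma of Amenta--Ziegler, this forces a prescribed subcomplex of $\partial Q$ to lie on the upper shadow boundary of $Q$ with respect to $\pi$ and therefore to inject as a subcomplex of $\partial P_{k,r}$. The subcomplex to be preserved consists of every vertex, every edge, and every $2$-face of the form ``one polygon factor $P_i$ at a fixed vertex of each other factor.''

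Second, I would count the $f$-vector of $P_{k,r}$. From the preserved subcomplex one reads off $f_{0}(P_{k,r}) = r^{k}$, $f_{1}(P_{k,r}) = k r^{k}$, and at least $k r^{k-1}$ polygon $2$-faces; the remaining $2$-faces and all $3$-faces of $P_{k,r}$ arise from the shadow boundary and can be bounded by $O(k r^{k})$ and $O(r^{k})$ respectively via a Radon-partition count. Substituting the resulting asymptotics into
\[
\mathcal{F}(P_{k,r}) \;=\; \frac{f_{1}+f_{2}-20}{f_{0}+f_{3}-10},
\]
and matching leading coefficients, the optimal tuning of the deformation parameters yields the limit value $9$, matching the constant obtained in Ziegler's original work.

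The main obstacle, which occupies the bulk of the argument, is the rigorous verification of the Projection Lemma: one must exhibit explicit facet normals of $Q$ whose perturbations are simultaneously (i) consistent with $Q$ being a convex polytope combinatorially equivalent to the honest product of polygons, and (ii) small enough that every chosen low-dimensional face of $Q$ lies on the upper shadow boundary of $Q$ under $\pi$. This is a delicate geometric engineering problem, typically phrased through Gale diagrams or an inductive construction of the perturbed inequalities; once it is handled, the $f$-vector computation and the limit $\mathcal{F}\to 9$ follow by direct enumeration.
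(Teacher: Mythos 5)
The paper does not actually prove this theorem: it is attributed to Ziegler and cited from \cite{ziegler04}, with only a one-sentence gloss (a ``deformed product'' of many large, even-sided polygons projected into $\mathbb{R}^4$, giving fatness approaching $9$ and complexity approaching $16$). So there is no internal proof to compare against; your proposal is a reconstruction of Ziegler's argument. The overall strategy you describe matches the paper's description, and the ``Projection Lemma'' framing for strictly preserved faces is the correct tool.

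The gap is in the $f$-vector analysis, which is the substantive content of the theorem. Your bounds $f_2 = O(kr^k)$ and $f_3 = O(r^k)$ do not and cannot produce the limit $9$. Euler's relation forces $f_2 - f_3 = f_1 - f_0 = (k-1)r^k$, so if $f_3 \le c\, r^k$ with $c$ independent of $k$, then
\[
\mathcal{F}(P_{k,r}) \;\approx\; \frac{(2k-1+c)\,r^k}{(1+c)\,r^k} \;\longrightarrow\; \infty \quad (k\to\infty),
\]
not $9$. The correct order is $f_3 = \Theta(k\,r^k)$ with constant near $1/4$, and deriving this constant is exactly where the number $9$ comes from. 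Concretely: every vertex of the projected $4$-polytope has degree $2k$ (all edges of $Q$ are preserved), so its vertex figure is a $3$-polytope with $2k$ vertices and hence at least $k+2$ facets; the construction is tuned so these vertex figures are (essentially) simple and the facets (essentially) simplicial, pinning $f_{03}$ near $(k+2)r^k$ and $f_3$ near $f_{03}/4 \approx (k/4)\,r^k$. Plugging $f_3 \approx (k/4)r^k$ into the fatness quotient gives $\bigl((2k-1)+k/4\bigr)/\bigl(1+k/4\bigr) \to 9$. Your proposed ``Radon-partition count'' is not the mechanism and does not yield this constant. You also drop the requirement that the polygons be even-sided, which the paper mentions and which is part of Ziegler's deformed-product machinery.
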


Ziegler constructs these polytopes in \cite{ziegler04} by taking a ``deformed product" of many large, even-sided polygons and projecting it into $\mathbb{R}^4$. These polytopes have fatness approaching 9 and complexity approaching 16.

\begin{theorem}[Eppstein, Kuperberg, and Ziegler]
    There exist arbitrarily fat 3-spheres.
\end{theorem}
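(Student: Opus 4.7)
The plan is to construct strongly regular CW 3-spheres by thickening dense cellulations of high-genus surfaces embedded in $S^3$. The driving observation is that on a closed orientable surface $\Sigma_g$ of genus $g$, Euler's relation $f_0 - f_1 + f_2 = 2 - 2g$ places no upper bound on the ratio $f_1/(f_0+f_2)$ as $g$ grows. If we can transfer this imbalance into a cellulation of $S^3$ without inflating the vertex or facet counts, the fatness $(f_1+f_2-20)/(f_0+f_3-10)$ will be unbounded.

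First, for an integer parameter $n$, I would choose a genus $g$ growing polynomially with $n$ and construct a strongly regular CW structure on $\Sigma_g$ with $f_1$ much larger than $f_0 + f_2$. A minimal CW structure on $\Sigma_g$ (one vertex, $2g$ loops, one face) realizes the extreme ratio $f_1/(f_0+f_2) = g$ but is not strongly regular, since all $2g$ edges share both endpoints. I would therefore apply a randomized subdivision: start from the minimal structure and iteratively subdivide edges and 2-cells using random data, then argue via the probabilistic method that with positive probability no two resulting cells intersect in more than the unique face allowed by strong regularity. The end product should be a strongly regular CW structure on $\Sigma_g$ with $f_1 = \Theta(g)$ while $f_0$ and $f_2$ grow as a smaller power of $g$.

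Second, I would embed $\Sigma_g$ in $S^3$ as a Heegaard surface, so that $S^3 = H_1 \cup_{\Sigma_g} H_2$ with $H_1, H_2$ handlebodies of genus $g$. To extend the cellulation of $\Sigma_g$ to each $H_i$, I would choose a system of $g$ meridional disks that cut $H_i$ into a 3-ball, subdivide each disk compatibly with the cellulation of $\Sigma_g$ along its boundary, and cone the resulting 3-ball over a single interior vertex. The new 0- and 3-cells introduced in each handlebody are dominated by $O(g)$ plus the number of boundary cells of the meridional disks, which can be controlled by the same random subdivision. Combining both handlebodies along $\Sigma_g$ gives a strongly regular CW 3-sphere whose $f_1$ and $f_2$ inherit the $\Theta(g)$ growth of the surface while $f_0 + f_3$ remains of strictly smaller order, so fatness diverges as $g \to \infty$.

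The main obstacle I expect is establishing strong regularity throughout the construction. On a high-genus surface with dense edge count, the combinatorial demand that every pair of cells intersects in a face is extremely fragile: many pairs of 2-cells are potentially adjacent in complicated ways, and a naive subdivision creates repeated intersections. The bulk of the work will go into the probabilistic analysis — likely a first moment bound, or a Lov\'asz local lemma estimate, showing that the expected number of violating pairs is strictly less than the number of independent choices in the subdivision — together with a verification that the deterministic extension into the handlebodies introduces no new coincidences. Once strong regularity is in hand, counting cells and applying Euler's relation on $\Sigma_g$ completes the proof.
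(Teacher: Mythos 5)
The paper does not actually prove this theorem: it cites Eppstein, Kuperberg, and Ziegler \cite{eppstein03} and gives only a one-sentence description of their method (random finite covering spaces of high-genus surfaces, then thickening), along with a mention in the introduction that the construction is probabilistic. So there is no proof in the paper to compare against; I will instead assess your sketch on its merits and against the cited method.

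The handlebody-filling step is broken as written. You cut a genus-$g$ handlebody $H$ along meridian disks $D_1,\ldots,D_g$ into a 3-ball $B$, cone $B$ over a single interior vertex $v$, and then reassemble $H$ by re-identifying the two copies $D_i^+,D_i^-$ of each disk on $\partial B$. After re-identification, the two pyramids with apex $v$ over $D_i^+$ and $D_i^-$ share both the disk $D_i$ and the apex $v$, but $v\notin D_i$, so their intersection is disconnected and is not a face. Worse, for each boundary vertex $w$ of $D_i$, the cone contains two distinct edges from $v$ to $w$, one from each side of the cut, which alone violates strong regularity. Coning the cut-open ball is fundamentally incompatible with re-gluing the cut; a repair would require an internal decomposition of the handlebody considerably more elaborate than a single cone, and you give no argument that such a decomposition can be made strongly regular without adding enough new $0$- and $3$-cells to swamp the edge surplus.

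Your route to the surface cellulation is also a different (and more fragile) mechanism than the one EKZ use. Starting from the one-vertex, $2g$-edge, one-face structure and subdividing randomly, the excess $f_1-f_0-f_2=2g-2$ is fixed by Euler's relation, but you need $f_0+f_2=o(g)$ together with strong regularity, and subdivision only adds vertices and faces; there is no a priori reason the two goals are simultaneously achievable, and a first-moment or local-lemma bound is not obviously available because the ``bad events'' are not local. EKZ instead fix a single small strongly regular cellulation of a low-genus base surface and pull it back along a random finite cover: the local combinatorics, and hence the face-count ratios, are inherited automatically, and randomness is used only to break global coincidences between lifts. That choice is what makes the strong-regularity argument tractable and yields the explicit exponents in their $f$-vectors $(\Theta(n^{12}),\Theta(n^{13}),\Theta(n^{13}),\Theta(n^{12}))$, which your sketch does not produce. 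As it stands your proposal has the right flavor but two real gaps: the handlebody cone does not give a CW ball after re-gluing, and the surface-side probabilistic argument is asserted rather than set up.
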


Eppstein et al. construct these spheres in \cite{eppstein03} from randomly chosen, finite covering spaces of high-genus surfaces. It is unknown whether these spheres can be specified to be shellable \cite{eppstein25}.


\section{Warm-up: binary matrices}
\label{s-binary-matrices}

In this section, we motivate our main construction with an analogue in the setting of binary matrices. We construct a family of $n \times n$ binary matrices avoiding the pattern
\[
    \begin{bmatrix}
        \ & 1 & \ & 1 \\
        1 & \ & 1 & \
    \end{bmatrix},
\]
whose total number of ones is superlinear in $n$. F\"uredi and Hajnal gave the first such construction in \cite{furedi92}; ours is broadly the same, but we make some small adjustments for accuracy, and we include for completeness a proof of the superlinear bound.

We first introduce the Ackermann function (\S\ref{ss-ackermann}), which we will use to compute asymptotic bounds both here and in our main construction of spheres. We then give the aforementioned construction of pattern-avoiding matrices (\S\ref{ss-matrix}). Finally, we use these matrices to generate a family of polytopal 3-complexes with unbounded complexity (\S\ref{ss-moment-curve}).

\subsection{The Ackermann function}
\label{ss-ackermann}

The objects we will construct are huge, and we cannot describe their asymptotic behavior with polynomials, exponentials, or similar. Thus, we introduce the extremely fast-growing Ackermann function $A(s,t)$ and a related function $C(s,t)$. Several distinct versions of both functions appear in the literature; we define $A(s,t)$ and $C(s,t)$ as in \cite{sharir95} and \cite{furedi92}, respectively. Our discussion in this subsection parallels \cite[pp.~21-22]{sharir95}.

\begin{definition}
    For positive integers $s$ and $t$, the \emph{Ackermann function} $A(s,t)$ is defined recursively as follows.
    \begin{itemize}
        \item $A(1,t) = 2t$ for all $t$.
        \item $A(s,1) = 2$ for all $s$.
        \item $A(s,t) = A(s-1,A(s,t-1))$ for $s,t>1$.
    \end{itemize}
    The \emph{inverse Ackermann function} is defined as $\alpha(n) = \min\{s \mid A(s,s) \geq n\}$.
\end{definition}

\begin{definition}
    For positive integers $s$ and $t$, the function $C(s,t)$ is defined recursively as follows.
    \begin{itemize}
        \item $C(1,t)=1$ for all $t$.
        \item $C(s,1)=2$ for all $s>1$.
        \item $C(s,t)=C(s,t-1)C(s-1,C(s,t-1))$ for $s,t>1$.
    \end{itemize}
\end{definition}

Values of $A(s,t)$ and $C(s,t)$ for small $s$ and $t$ are given in Table \ref{ackermann}.

\begin{table}[t]
\[
    \begin{NiceArray}[t,first-row]{|c|llll|}
        \multicolumn{5}{c}{A(s,t)}\\
        \hline
        \diagbox{s}{t}& 1 & 2 & 3 & 4\\
        \hline
        1 & 2 & 4 & 6 & 8\\
        2 & 2 & 4 & 8 & 16\\
        3 & 2 & 4 & 16 & 2^{16}\\
        4 & 2 & 4 & 2^{16} & \underbrace{2^{2^{\iddots{}^{{}^{{}^2}}}}}_\text{\tiny $2^{16}$ twos}\\
        \hline
    \end{NiceArray}
    \qquad
    \begin{NiceArray}[t,first-row]{|c|llll|}
        \multicolumn{5}{c}{C(s,t)}\\
        \hline
        \diagbox{s}{t}& 1 & 2 & 3 & 4\\
        \hline
        1 & 1 & 1 & 1 & 1\\
        2 & 2 & 2 & 2 & 2\\
        3 & 2 & 4 & 8 & 16\\
        4 & 2 & 8 & 2^{11} & 2^{11+2^{11}}\\
        \hline
    \end{NiceArray}
\]
    \caption{Values of $A(s,t)$ and $C(s,t)$ for $s,t < 5$.}
    \label{ackermann}
\end{table}

\begin{observation}
    The following are easy to check.
    \[
        A(2,t)=2^t, \qquad A(s,2) = 4, \qquad C(2,t) = 2, \qquad C(3,t) = 2^t, \qquad C(s,2) = 2^{s-1}.
    \]
\end{observation}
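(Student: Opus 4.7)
The plan is to verify each of the five claimed closed forms by a short induction that uses only the recursive definitions of $A(s,t)$ and $C(s,t)$, together with the base cases $A(1,t)=2t$, $A(s,1)=2$, $C(1,t)=1$, and $C(s,1)=2$ (for $s>1$). Since each identity fixes one argument and lets the other vary, in every case I expect the induction to be on the free variable, with the step collapsing after one unfolding of the recurrence.

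First I would handle $A(2,t)=2^{t}$ by induction on $t$: the base case $A(2,1)=2$ matches $2^{1}$, and for $t>1$ the recurrence gives $A(2,t)=A(1,A(2,t-1))=2\cdot A(2,t-1)$, which by the inductive hypothesis equals $2\cdot 2^{t-1}=2^{t}$. Next, for $A(s,2)=4$ I would induct on $s$: the base case is $A(1,2)=4$, and for $s>1$ the recurrence gives $A(s,2)=A(s-1,A(s,1))=A(s-1,2)=4$ by induction.

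Turning to $C$, the identity $C(2,t)=2$ follows by induction on $t$ from $C(2,1)=2$ and the step $C(2,t)=C(2,t-1)\cdot C(1,C(2,t-1))=2\cdot 1=2$. Likewise $C(3,t)=2^{t}$ follows from $C(3,1)=2$ and the step $C(3,t)=C(3,t-1)\cdot C(2,C(3,t-1))=2^{t-1}\cdot 2=2^{t}$, using the previously proved $C(2,\cdot)=2$. Finally, $C(s,2)=2^{s-1}$ follows by induction on $s$ with base case $C(2,2)=2$, since for $s>2$ the recurrence yields $C(s,2)=C(s,1)\cdot C(s-1,C(s,1))=2\cdot C(s-1,2)=2\cdot 2^{s-2}=2^{s-1}$.

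There is no real obstacle here; the observation is essentially a bookkeeping check that the recursive definitions reduce to the stated elementary expressions at these special values, and the only subtlety is the ordering of the inductions so that each step can invoke either the inductive hypothesis or a previously established identity (for example, $C(3,t)=2^{t}$ depends on having first verified $C(2,t)=2$, and $C(s,2)=2^{s-1}$ depends on the base case $C(2,2)=2$).
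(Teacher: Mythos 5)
Your proposal is correct. The paper itself gives no proof (the observation merely asserts these identities are easy to check), and your inductive verifications, each unfolding the defining recurrence once and invoking the previously established identity where needed (in particular $C(2,\cdot)=2$ inside the step for $C(3,\cdot)$), are exactly the routine bookkeeping the paper leaves to the reader.
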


\begin{lemma}
    \label{ackermann-properties}
    Let $s$ and $t$ be positive integers.
    \begin{enumerate}[label=\roman*.]
        \item $A(s,t+1) > A(s,t)$.
        \item $A(s,t) \geq t+2$ for $t > 1$.
        \item $A(s+1,t) \geq A(s,t+1)$ for $t > 2$.
        \item $A(s,3) \geq 2^{s+1}$.
    \end{enumerate}
\end{lemma}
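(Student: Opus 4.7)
The plan is to prove the four parts in the order (ii), (i), (iii), (iv), because each subsequent part leans on the earlier ones. The overall strategy throughout is induction on $s$, with careful use of the defining recursion $A(s,t) = A(s-1, A(s,t-1))$.

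For (ii), I would induct on $s$ with a nested induction on $t$. The base case $s=1$ is direct, since $A(1,t) = 2t \geq t+2$ precisely when $t \geq 2$. For $s \geq 2$, the inner base $t=2$ is the telescoping identity $A(s,2) = A(s-1, A(s,1)) = A(s-1,2) = \cdots = A(1,2) = 4$; for $t \geq 3$, the recursion gives $A(s,t) = A(s-1, A(s,t-1))$, and the inner inductive hypothesis provides $A(s,t-1) \geq (t-1)+2 = t+1 \geq 2$, whereupon (ii) applied one level lower yields $A(s-1, A(s,t-1)) \geq A(s,t-1) + 2 \geq t+3$.

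Part (i) then follows quickly. For $s = 1$ it is immediate, and for $s \geq 2$ the case $t = 1$ is the numerical comparison $A(s,2) = 4 > 2 = A(s,1)$; for $t \geq 2$ one expands $A(s,t+1) = A(s-1, A(s,t))$ and applies (ii) at level $s-1$ to the quantity $A(s,t) \geq t+2 \geq 4$, which gives $A(s,t+1) \geq A(s,t) + 2 > A(s,t)$. Part (iii) reuses the same pair of moves: for $t \geq 3$, expand $A(s+1,t) = A(s, A(s+1,t-1))$, invoke (ii) to get $A(s+1, t-1) \geq t+1$ (which needs $t-1 \geq 2$, hence the hypothesis $t > 2$), and finally apply monotonicity (i) to conclude $A(s, A(s+1,t-1)) \geq A(s, t+1)$.

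The main obstacle is (iv), where a doubly exponential estimate has to be extracted from a single inductive step. I would induct on $s$, verifying $s = 1, 2$ directly from the table, and for $s \geq 3$ unfold twice: $A(s,3) = A(s-1, A(s,2)) = A(s-1, 4)$ and then $A(s-1, 4) = A(s-2, A(s-1, 3))$. Plugging in the inductive hypothesis $A(s-1, 3) \geq 2^s$, it suffices to show $A(s-2, k) \geq 2k$ for $k \geq 2$, which is immediate for $s-2 = 1$ from $A(1,k) = 2k$, and for $s-2 \geq 2$ follows from the stronger tower bound $A(s-2, k) \geq 2^k$ (itself provable by a short induction on $s$ using the recursion, exactly parallel to the argument for (ii)). Applying this with $k = 2^s \geq 4$ yields $A(s,3) \geq 2 \cdot 2^s = 2^{s+1}$, closing the induction.
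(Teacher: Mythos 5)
Your proof is correct, but it takes a different route at two points. For (i)--(ii), the paper proves strict monotonicity (i) first by a direct double induction, and then obtains (ii) by iterating (i) from the anchor $A(s,2)=4$; you invert this, proving the additive lower bound (ii) by double induction first (using ``(ii) at level $s-1$'' as the outer hypothesis to get $A(s-1,m)\geq m+2$) and then deriving (i) from it. Both dependency orders are sound, since $A(s,t+1)=A(s-1,A(s,t))>A(s,t)$ can be justified either by monotonicity of $A(s-1,\cdot)$ or by the additive bound. The more substantive divergence is (iv): the paper notices that (iii) can simply be iterated into the chain $A(s,3)\geq A(s-1,4)\geq A(s-2,5)\geq\cdots\geq A(2,s+1)=2^{s+1}$, a one-line argument that needs nothing beyond (iii) itself and the closed form $A(2,k)=2^k$. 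Your version bypasses (iii) entirely: you unfold the recursion twice to $A(s,3)=A(s-2,A(s-1,3))$, feed in the inductive hypothesis $A(s-1,3)\geq 2^s$, and close via an auxiliary lemma $A(s-2,k)\geq 2k$ for $k\geq 2$ (which for $s-2\geq 2$ you get from the stronger tower bound $A(s',k)\geq 2^k$, itself requiring another short double induction that uses (i)). That is a valid argument, but it adds an extra lemma and misses that (iii) is precisely the tool that trades a decrement in $s$ for an increment in $t$ all the way down to $A(2,\cdot)$ --- which is why the paper proves (iii) in the first place and why its (iv) is immediate from it.
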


\begin{proof}
    We will prove i-iv in order.
    \begin{enumerate}[wide,label=\roman*.]
        \item By definition, $A(1,t+1) = 2(t+1) > 2t = A(1,t)$. We also know that $A(s,2) = 4 > 2 = A(s,1)$. The cases $s,t>1$ follow by induction:
        \[
            A(s,t+1) = A(s-1,A(s,t)) > A(s-1,A(s,t-1)) = A(s,t).
        \]
        \item If $t=2$, we have $A(s,2) = 4 = t+2$. The cases $t>2$ then follow from i.
        \item Let $t > 2$. Then by ii and the recursive definition of $A(s,t)$,
        \[
            A(s+1,t) = A(s,A(s+1,t-1)) \geq A(s,t+1).
        \]
        \item If $s=1$, we have $A(1,3) = 6 > 4 = 2^{s+1}$. If $s>1$, then by iii,
        \[
            A(s,3) \geq A(s-1,4) \geq A(s-2,5) \geq \cdots \geq A(2,s+1) = 2^{s+1}.\qedhere
        \]
    \end{enumerate}
\end{proof}

\begin{lemma}
    \label{c-properties}
    Let $s$ and $t$ be positive integers.
    \begin{enumerate}[label=\roman*.]
        \item $C(s,t+1) \geq C(s,t)$.
        \item $C(s,t+1) \geq 2C(s,t)$ for $s>2$.
    \end{enumerate}
\end{lemma}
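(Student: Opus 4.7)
The plan is to unwind the recursive definition of $C(s,t)$ directly. The only preliminary fact I need is a uniform lower bound: $C(s,t) \geq 1$ always, and $C(s,t) \geq 2$ whenever $s \geq 2$. The first is immediate from the recursion, since all base cases are positive and the recursive step is a product of positive integers. The second follows by induction on $t$: the base case $C(s,1) = 2$ is given, and if $C(s,t) \geq 2$ then $C(s,t+1) = C(s,t) \cdot C(s-1, C(s,t)) \geq 2$, using again that the second factor is a positive integer.

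For part i, I would split on $s$. When $s = 1$, both sides equal $1$, and the inequality is trivial. When $s > 1$, I apply the recursion $C(s,t+1) = C(s,t) \cdot C(s-1, C(s,t))$, noting that the second factor is at least $1$ by the preliminary bound. This immediately gives $C(s,t+1) \geq C(s,t)$.

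For part ii, I assume $s > 2$, so $s - 1 \geq 2$, and I want to show that the factor $C(s-1, C(s,t))$ appearing in the recursion is at least $2$. This follows from the preliminary bound applied with first coordinate $s-1 \geq 2$ and any positive integer second coordinate (here $C(s,t) \geq 1$). The only subtlety is the case $t = 1$, which is handled by directly computing $C(s,2) = C(s,1)\cdot C(s-1, C(s,1)) = 2 \cdot C(s-1,2) \geq 2 \cdot 2 = 2 C(s,1)$. For $t \geq 2$, the same recursion step gives $C(s,t+1) \geq 2 C(s,t)$.

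I do not expect a real obstacle here: the argument is a routine induction bookkeeping exercise on the two-variable recursion. The one place where care is needed is to verify that the hypothesis $s > 2$ in part ii is exactly what is required so that $s-1 \geq 2$ and hence $C(s-1, \cdot) \geq 2$; the analogous statement fails at $s = 2$ because $C(1, \cdot) = 1$, and indeed $C(2,t) = 2$ is constant.
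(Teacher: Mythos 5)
Your proof is correct and follows essentially the same approach as the paper: unwind the recursion $C(s,t+1) = C(s,t)\,C(s-1,C(s,t))$ and bound the second factor below by $1$ (part i) or by $2$ (part ii). The only cosmetic difference is in part ii: the paper deduces $C(s-1,\cdot) \geq 2$ directly from part i together with the base value $C(s-1,1)=2$, whereas you prove the slightly more self-contained preliminary bound $C(s',t) \geq 2$ for $s' \geq 2$ by a fresh induction on $t$; these are the same idea. One small remark: your ``subtlety'' at $t=1$ is not actually a special case, since the defining recursion $C(s,t') = C(s,t'-1)C(s-1,C(s,t'-1))$ is stated for $s,t'>1$, and in computing $C(s,t+1)$ one takes $t' = t+1 \geq 2$, which is always in range.
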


\begin{proof}
    We will prove i and then ii.
    \begin{enumerate}[wide,label=\roman*.]
        \item By definition, $C(1,t+1) = 1 = C(1,t)$. If $s>1$, then
        \[
            C(s,t+1) = C(s,t)C(s-1,C(s,t)) \geq C(s,t).
        \]
        \item Let $s>2$. It follows from i that $C(s-1,t) \geq C(s-1,1) = 2$ for all $t$. Thus,
        \[
            C(s,t+1) = C(s,t)C(s-1,C(s,t)) \geq 2C(s,t).\qedhere
        \]
    \end{enumerate}
\end{proof}

When working with $A(s,t)$ and $C(s,t)$, we will often use their monotonicity in $t$ without explicit reference to Lemmas \ref{ackermann-properties}i and \ref{c-properties}i.

\begin{lemma}
\label{inverse-ackermann}
    For all positive integers $s>1$ and $t$,
    \[
        A(s-1,t) \leq tC(s,t) < A(s,t+1).
    \]
\end{lemma}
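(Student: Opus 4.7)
My plan is to prove the two inequalities separately, in each case by induction on $s$, using the recursive structures $A(s-1,t) = A(s-2)^{(t-1)}(2)$ (iterated $t-1$ times starting from $A(s-1,1)=2$) and $C(s,t) = f_s^{(t-1)}(2)$, where $f_s(x) \coloneq x\,C(s-1,x)$. Throughout I will freely use monotonicity of $A$ and $C$ in the second argument (Lemmas \ref{ackermann-properties}i and \ref{c-properties}i), as well as monotonicity of $A$ in the first argument, which one reads off from Lemma \ref{ackermann-properties}iii together with the base values $A(s,1)=2$ and $A(s,2)=4$.

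For the lower bound $A(s-1,t) \leq t\,C(s,t)$, the base $s=2$ is immediate because $A(1,t) = 2t = t\,C(2,t)$. For the inductive step $s \geq 3$, the induction hypothesis applied to $s-1$ gives exactly $A(s-2,x) \leq x\,C(s-1,x) = f_s(x)$ for all $x \geq 1$. Since $A(s-2,\cdot)$ and $f_s$ are both monotone and agree at the starting value $2$ (in the sense that both iterations begin there), iterating the pointwise inequality $t-1$ times yields $A(s-1,t) = A(s-2)^{(t-1)}(2) \leq f_s^{(t-1)}(2) = C(s,t)$, which is stronger than $t\,C(s,t)$.

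For the upper bound $t\,C(s,t) < A(s,t+1)$, the base $s=2$ is the elementary fact $2t < 2^{t+1}$. For $s \geq 3$, I will induct on $t$; the case $t=1$ is $2 < 4$. For $t \geq 2$, set $u \coloneq C(s,t-1)$, so that $t\,C(s,t) = tu\,C(s-1,u)$. The outer induction hypothesis applied to $s-1$ at $t' = u$ gives $u\,C(s-1,u) < A(s-1,u+1)$, so $t\,C(s,t) < t\,A(s-1,u+1)$. Next, because $A(s-2,x) \geq 2x$ for $s \geq 3$ (follows from monotonicity of $A$ in $s$ and $A(1,x) = 2x$), the map $A(s-1,\cdot)$ at least doubles at each step, so $A(s-1,u+t) \geq 2^{t-1} A(s-1,u+1) \geq t\,A(s-1,u+1)$. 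It therefore suffices to show $A(s,t) \geq u+t$, for then $A(s,t+1) = A(s-1,A(s,t)) \geq A(s-1,u+t) \geq t\,A(s-1,u+1) > t\,C(s,t)$.

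The inequality $A(s,t) \geq u+t$ is where the only real friction arises, because the inner induction hypothesis gives $A(s,t) \geq (t-1)u + 1$, and rearranging to $u+t$ requires $(t-2)u \geq t-1$. This holds comfortably for $t \geq 3$ since $u \geq C(s,1) = 2$, and I will handle $t=2$ separately by noting $A(s,2) = 4 = C(s,1) + 2 = u+t$. That one edge case is the main obstacle; once it is dispatched, both bounds assemble cleanly from the recursive iteration viewpoint.
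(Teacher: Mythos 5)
Your proof is correct, and the upper-bound argument is a genuinely different route from the paper's. The paper strengthens the claim to $tC(s,t) \leq A(s,t+1)-2$ and pushes that slack through the double induction using Lemma~\ref{c-properties}ii (the doubling $C(s,t+1) \geq 2C(s,t)$); the delicate step is turning $(2A(s,t)-2)\,C(s-1,A(s,t)-2)$ into $(A(s,t)-1)\,C(s-1,A(s,t)-1)$ so the IH at level $s-1$ applies. You instead keep the strict inequality, apply the outer IH at $t' = C(s,t-1)$ to get $u\,C(s-1,u) < A(s-1,u+1)$, absorb the factor $t$ via the doubling $A(s-1,k+1) = A(s-2,A(s-1,k)) \geq 2A(s-1,k)$ (valid since $A(s-2,x) \geq 2x$), and then reduce to $A(s,t) \geq C(s,t-1)+t$, which comes from the inner IH with the small $t=2$ edge case handled by hand. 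What your version buys is avoiding the ``guess the right additive constant'' step entirely and leaning only on the unstrengthened statement, at the cost of the $A$-doubling estimate and the auxiliary bound $A(s,t) \geq u+t$. Your lower-bound argument, by contrast, is essentially the paper's double induction re-packaged: you compare the iterates $A(s-2,\cdot)^{(t-1)}(2)$ and $f_s^{(t-1)}(2)$ pointwise, which is the same calculation the paper performs one step at a time, and both yield the stronger intermediate conclusion $A(s-1,t) \leq C(s,t)$ for $s \geq 3$. All steps check out, including the use of weak monotonicity of $A$ in its first argument (which follows from Lemma~\ref{ackermann-properties}iii together with the base values $A(s,1)=2$, $A(s,2)=4$) to justify $A(s-2,x) \geq 2x$.
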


\begin{proof}
    First, we will show that $tC(s,t) \geq A(s-1,t)$. For $s=2$, this is immediate, as $tC(2,t) = 2t = A(1,t)$.

    For $s>2$, we will prove the stronger statement that $C(s,t) \geq A(s-1,t)$. We will do so by double induction on $s$ and $t$, with base cases $s=3$ and $t=1$. The base case $s=3$ is immediate, as $C(3,t) = 2^t = A(2,t)$. The base case $t=1$ is also immediate, as $C(s,1) = 2 = A(s-1,1)$.

    For the inductive step, fix $s>3$, and suppose $C(s-1,t) \geq A(s-2,t)$ for all $t$. Fix $t>1$ as well, and suppose $C(s,t-1) \geq A(s-1,t-1)$. Then
    \begin{align*}
        C(s,t) &= C(s,t-1)C(s-1,C(s,t-1))\\
        &\geq C(s-1,C(s,t-1))\\
        &\geq A(s-2,A(s-1,t-1))\\
        &= A(s-1,t).
    \end{align*}
    This completes the inductive step. We may conclude that $C(s,t) \geq A(s-1,t)$ for all $s>2$, so $tC(s,t) \geq A(s-1,t)$ for all $s>1$ and all $t$.

    Second, we will show that $tC(s,t) \leq A(s,t+1)-2$. We will again use double induction on $s$ and $t$. The base cases will be $s=2,3$ and $t=1$, which we check in order.

    $s=2$:
    \[
        tC(2,t) = 2t \leq 2^{t+1}-2 = A(2,t+1)-2.
    \]
    $s=3$:
    \[
        tC(3,t) = t2^t \leq 2^{2t}-2 \leq 2^{A(3,t)}-2 = A(3,t+1)-2.
    \]
    $t=1$:
    \[
        C(s,1) = 2 = A(s,2)-2.
    \]

    For the inductive step, fix $s>3$, and assume $tC(s-1,t) \leq A(s-1,t+1)-2$ for all $t$. Fix $t>1$ as well, and assume $(t-1)C(s,t-1) \leq A(s,t)-2$.
    
    By the inductive hypothesis,
    \[
        tC(s,t-1) \leq 2(t-1)C(s,t-1) < 2A(s,t)-2.
    \]
    Using the above, Lemma \ref{c-properties}ii, and finally the inductive hypothesis again,
    \begin{align*}
        tC(s,t) &= tC(s,t-1)C(s-1,C(s,t-1))\\
        &< (2A(s,t)-2)C(s-1,A(s,t)-2)\\
        &\leq (A(s,t)-1)C(s-1,A(s,t)-1)\\
        &\leq A(s-1,A(s,t))-2\\
        &= A(s,t+1)-2.
    \end{align*}
    This completes the inductive step. We may conclude that $tC(s,t) \leq A(s,t+1)-2$ for all $s>1$ and all $t$.
\end{proof}

\subsection{Pattern-avoiding matrices}
\label{ss-matrix}

We formally introduce the notion of pattern-avoiding matrices, then proceed with our construction. As mentioned, our construction is adapted from \cite{furedi92}, and we have also consulted \cite[\S 2.3]{keszegh05} as a reference. Recall that a binary matrix is a matrix of ones and zeros.

\begin{definition}
    Let $M$ be a binary matrix. The \emph{weight} of $M$, denoted $\lvert M \rvert$, is the number of ones in $M$.
\end{definition}

\begin{definition}
    Let $M$ and $N$ be binary matrices. We say that $M$ \emph{contains} $N$ if we can obtain $N$ from $M$ by deleting a set of rows, deleting a set of columns, and changing a set of ones to zeros. We say that $M$ \emph{avoids} $N$ if we cannot obtain $N$ from $M$ in this way.
\end{definition}

Let
\[
    N=\begin{bmatrix}
        \ & 1 & \ & 1\\
        1 & \ & 1 & \
    \end{bmatrix},\qquad
    N'=\begin{bmatrix}
        1 & \ & 1\\
        1 & 1 & \
    \end{bmatrix}
\]
(zeros are left blank). We will construct arbitrarily large $n \times n$ binary matrices $M$, avoiding both $N$ and $N'$, such that $\lvert M \rvert = \Theta(n\alpha(n))$. Avoiding $N'$ is not really the goal, but it is counterintuitively easier to prove that our matrices avoid both $N$ and $N'$ than that they just avoid $N$.

For positive integers $s$ and $t$, we will define a $tC(s,t) \times tC(s,t)$ binary matrix $M(s,t)$. For any fixed $M(s,t)$, our definition will ensure that there exist $1 = c_1 < \cdots < c_{C(s,t)} \leq tC(s,t)$ such that the first $t$ rows have leading ones in column $c_1$, the next $t$ rows in column $c_2$, and so on. Following \cite{furedi92}, we will divide $M(s,t)$ into \emph{horizontal blocks} of consecutive rows, ending at rows $t, 2t, \ldots, tC(s,t)$. We will similarly divide $M(s,t)$ into \emph{vertical blocks} of consecutive columns, beginning at columns $c_1, \ldots, c_{C(s,t)}$.

\begin{construction}
    We define $M(s,t)$ recursively as follows.
    \begin{itemize}
        \item $M(1,t)$ is the $t \times t$ matrix with all ones in the first column and all zeros elsewhere.
        \item $M(s,1) = \begin{bsmallmatrix}
            1 & \ \\
            \ & 1
        \end{bsmallmatrix}$ for $s>1$.
        \item $M(s,t)$ for $s,t>1$ is obtained from $M(s,t-1)$ and $M(s-1,C(s,t-1))$ as shown in Figure \ref{mst-construction}. $H_1,\ldots,H_{C(s,t-1)}$ are the horizontal blocks of $M(s,t-1)$, which appears $C(s-1,C(s,t-1))$ many times. $V_1, \ldots, V_{C(s-1,C(s,t-1))}$ are the vertical blocks of $M(s-1,C(s,t-1))$, which appears once. $C(s,t)$ many extra ones are introduced, one per row of $M(s-1,C(s,t-1))$.
    \end{itemize}
\end{construction}

\begin{figure}[p]
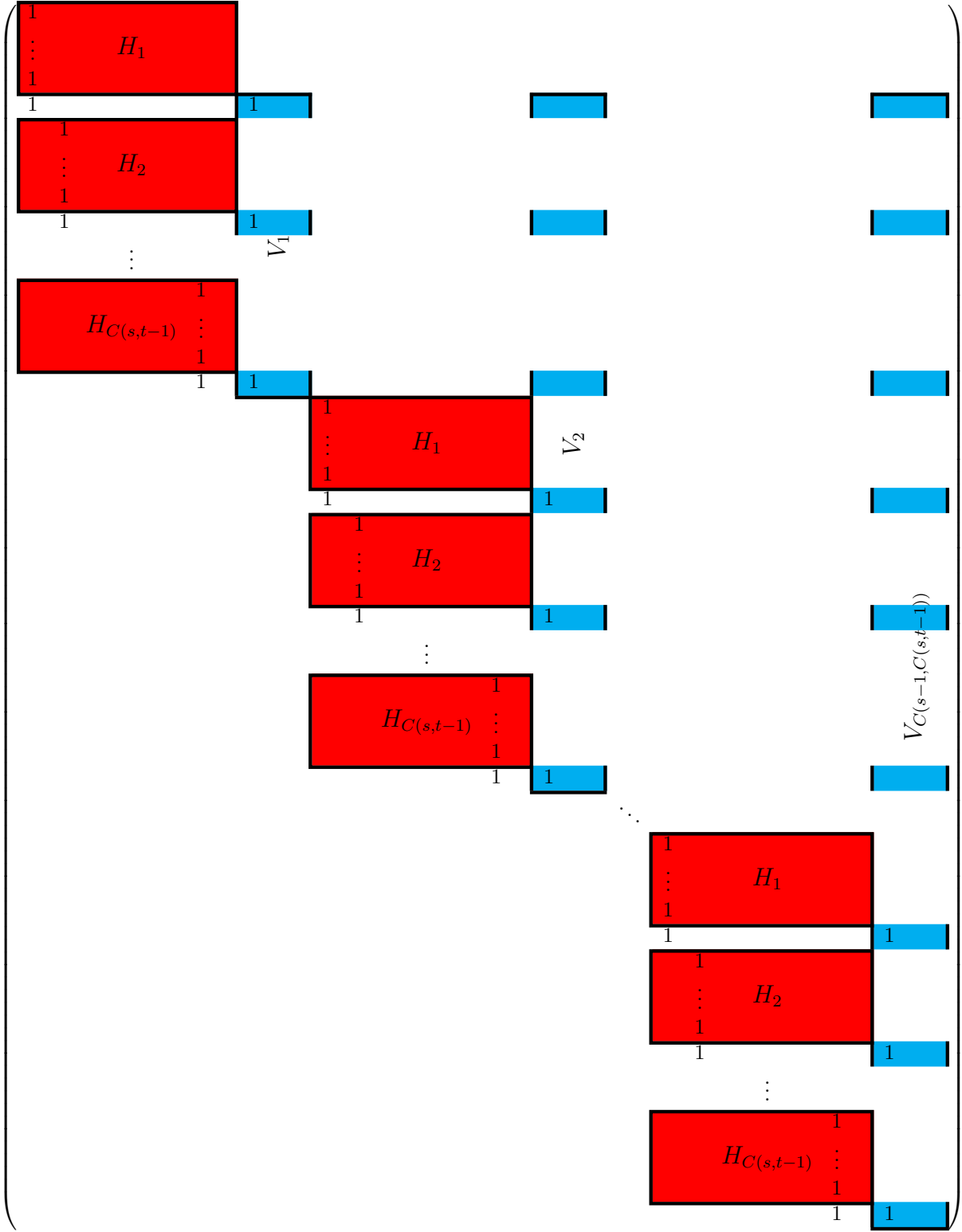

    \[
    \begin{pNiceMatrix}[margin,rules/width=1.5pt]
        \Block[fill=red,draw]{3-9}<\large>{H_1} 1\\
        \vdots\\
        1\\
        1 &&&&&&&&& \Block[fill=cyan,borders={top,left,right}]{1-3}{} \Block{10-3}<\large\rotate>{V_1} 1 &&&&&&&&&&&& \Block[fill=cyan,borders={top,left,right}]{1-3}{} \Block{23-3}<\large\rotate>{V_2} &&&&&&&&&&&&& \Block[fill=cyan,borders={top,left,right}]{1-3}{} \Block{37-3}<\large\rotate>{V_{C(s-1,C(s,t-1))}}\\
        \Block[fill=red,draw]{3-9}<\large>{H_2}& 1\\
       & \vdots\\
       & 1\\
       & 1 &&&&&&&& \Block[fill=cyan,borders={left,right}]{1-3}{} 1 &&&&&&&&&&&& \Block[fill=cyan,borders={left,right}]{1-3}{}&&&&&&&&&&&&& \Block[fill=cyan,borders={left,right}]{1-3}{}\\
        \Block{1-9}{\vdots}\\
        \Block[fill=red,draw]{3-9}<\large>{H_{C(s,t-1)}}&&&&&&& 1\\
       &&&&&&& \vdots\\
       &&&&&&& 1\\
       &&&&&&& 1 && \Block[fill=cyan,borders={bottom,left,right}]{1-3}{} 1 &&&&&&&&&&&& \Block[fill=cyan,borders={left,right}]{1-3}{}&&&&&&&&&&&&& \Block[fill=cyan,borders={left,right}]{1-3}{}\\
       &&&&&&&&&&&& \Block[fill=red,draw]{3-9}<\large>{H_1} 1\\
       &&&&&&&&&&&& \vdots\\
       &&&&&&&&&&&& 1\\
       &&&&&&&&&&&& 1 &&&&&&&&& \Block[fill=cyan,borders={left,right}]{1-3}{} 1 &&&&&&&&&&&&& \Block[fill=cyan,borders={left,right}]{1-3}{}\\
       &&&&&&&&&&&& \Block[fill=red,draw]{3-9}<\large>{H_2}& 1\\
       &&&&&&&&&&&&& \vdots\\
       &&&&&&&&&&&&& 1\\
       &&&&&&&&&&&&& 1 &&&&&&&& \Block[fill=cyan,borders={left,right}]{1-3}{} 1 &&&&&&&&&&&&& \Block[fill=cyan,borders={left,right}]{1-3}{}\\
       &&&&&&&&&&&& \Block{1-9}{\vdots}\\
       &&&&&&&&&&&& \Block[fill=red,draw]{3-9}<\large>{H_{C(s,t-1)}}&&&&&&& 1\\
       &&&&&&&&&&&&&&&&&&& \vdots\\
       &&&&&&&&&&&&&&&&&&& 1\\
       &&&&&&&&&&&&&&&&&&& 1 && \Block[fill=cyan,borders={bottom,left,right}]{1-3}{} 1 &&&&&&&&&&&&& \Block[fill=cyan,borders={left,right}]{1-3}{}\\
        &&&&&&&&&&&&&&&&&&&&&&&&\ddots\\
       &&&&&&&&&&&&&&&&&&&&&&&&& \Block[fill=red,draw]{3-9}<\large>{H_1} 1\\
       &&&&&&&&&&&&&&&&&&&&&&&&& \vdots\\
       &&&&&&&&&&&&&&&&&&&&&&&&& 1\\
       &&&&&&&&&&&&&&&&&&&&&&&&& 1 &&&&&&&&& \Block[fill=cyan,borders={left,right}]{1-3}{} 1\\
       &&&&&&&&&&&&&&&&&&&&&&&&& \Block[fill=red,draw]{3-9}<\large>{H_2}& 1\\
       &&&&&&&&&&&&&&&&&&&&&&&&&& \vdots\\
       &&&&&&&&&&&&&&&&&&&&&&&&&& 1\\
       &&&&&&&&&&&&&&&&&&&&&&&&&& 1 &&&&&&&& \Block[fill=cyan,borders={left,right}]{1-3}{} 1\\
       &&&&&&&&&&&&&&&&&&&&&&&&& \Block{1-9}{\vdots}\\
       &&&&&&&&&&&&&&&&&&&&&&&&& \Block[fill=red,draw]{3-9}<\large>{H_{C(s,t-1)}}&&&&&&& 1\\
       &&&&&&&&&&&&&&&&&&&&&&&&&&&&&&&& \vdots\\
       &&&&&&&&&&&&&&&&&&&&&&&&&&&&&&&& 1\\
       &&&&&&&&&&&&&&&&&&&&&&&&&&&&&&&& 1 && \Block[fill=cyan,borders={bottom,left,right}]{1-3}{} 1 &&
    \end{pNiceMatrix}
    \]
    \caption{Construction of $M(s,t)$ from $C(s-1,C(s,t-1))$ many copies of $M(s,t-1)$, one copy of $M(s-1,C(s,t-1))$, and $C(s,t)$ many extra ones. $H_1, \ldots, H_{C(s,t-1)}$ are the horizontal blocks of $M(s,t-1)$; $V_1, \ldots, V_{C(s-1,C(s,t-1))}$ are the vertical blocks of $M(s-1,C(s,t-1))$.}
    \label{mst-construction}
\end{figure}

We refer the reader to \cite[Definition 7.2]{furedi92} for a narrative description of the recursive step above; our recursive step (Figure \ref{mst-construction}) is identical. However, we use different base cases $M(1,t)$ and $M(s,1)$ to ensure that each has the claimed number of horizontal and vertical blocks. Small examples of $M(s,t)$ are given in Table \ref{matrices}.

\begin{table}[ht!]
\begin{align*}
M(1,1)&=\begin{bsmallmatrix}
    1
\end{bsmallmatrix},
&
M(1,2)&=\begin{bsmallmatrix}
    1 & \ \\
    1 & \ 
\end{bsmallmatrix},
&
M(1,3)&=\begin{bsmallmatrix}
    1 & \ & \ \\
    1 & \ & \ \\
    1 & \ & \
\end{bsmallmatrix},
&
M(1,4)&=\begin{bsmallmatrix}
    1 & \ & \ & \ \\
    1 & \ & \ & \ \\
    1 & \ & \ & \ \\
    1 & \ & \ & \
\end{bsmallmatrix},&\ldots,\\
M(2,1)&=\begin{bsmallmatrix}
    1 & \ \\
    \ & 1
\end{bsmallmatrix},
&
M(2,2)&=\begin{bsmallmatrix}
    \color{red}\textbf{1} & \ & \ & \ \\
    1 & \ & \color{cyan}\underline{1} & \ \\
    \ & \color{red}\textbf{1} & \ & \ \\
    \ & 1 & \color{cyan}\underline{1} & \
\end{bsmallmatrix},
&
M(2,3)&=\begin{bsmallmatrix}
    \color{red}\textbf{1} & \ & \ & \ & \ & \ \\
    \color{red}\textbf{1} & \ & \color{red}\textbf{1} & \ & \ & \ \\
    1 & \ & \ & \ & \color{cyan}\underline{1} & \ \\
    \ & \color{red}\textbf{1} & \ & \ & \ & \ \\
    \ & \color{red}\textbf{1} & \color{red}\textbf{1} & \ & \ & \ \\
    \ & 1 & \ & \ & \color{cyan}\underline{1} & \
\end{bsmallmatrix},
&
M(2,4)&=\begin{bsmallmatrix}
    \color{red}\textbf{1} & \ & \ & \ & \ & \ & \ & \ \\
    \color{red}\textbf{1} & \ & \color{red}\textbf{1} & \ & \ & \ & \ & \ \\
    \color{red}\textbf{1} & \ & \ & \ & \color{red}\textbf{1} & \ & \ & \ \\
    1 & \ & \ & \ & \ & \ & \color{cyan}\underline{1} & \ \\
    \ & \color{red}\textbf{1} & \ & \ & \ & \ & \ & \ \\
    \ & \color{red}\textbf{1} & \color{red}\textbf{1} & \ & \ & \ & \ & \ \\
    \ & \color{red}\textbf{1} & \ & \ & \color{red}\textbf{1} & \ & \ & \ \\
    \ & 1 & \ & \ & \ & \ & \color{cyan}\underline{1} & \
\end{bsmallmatrix},&\ldots,\\
M(3,1)&=\begin{bsmallmatrix}
    1 & \\
   \ & 1
\end{bsmallmatrix},
&
M(3,2)&=\begin{bsmallmatrix}
    \color{red}\textbf{1} & \ & \ & \ & \ & \ & \ & \ \\
    1 & \ & \color{cyan}\underline{1} & \ & \ & \ & \ & \ \\
    \ & \color{red}\textbf{1} & \ & \ & \ & \ & \ & \ \\
    \ & 1 & \color{cyan}\underline{1} & \ & \ & \ & \color{cyan}\underline{1} & \ \\
    \ & \ & \ & \color{red}\textbf{1} & \ & \ & \ & \ \\
    \ & \ & \ & 1 & \ & \color{cyan}\underline{1} & \ & \ \\
    \ & \ & \ & \ & \color{red}\textbf{1} & \ & \ & \ \\
    \ & \ & \ & \ & 1 & \color{cyan}\underline{1} & \color{cyan}\underline{1} & \
\end{bsmallmatrix},
&
M(3,3)&=\mathrlap{\begin{bsmallmatrix}
    \color{red}\textbf{1} & \ & \ & \ & \ & \ & \ & \ & \ & \ & \ & \ & \ & \ & \ & \ & \ & \ & \ & \ & \ & \ & \ \\
    \color{red}\textbf{1} & \ & \color{red}\textbf{1} & \ & \ & \ & \ & \ & \ & \ & \ & \ & \ & \ & \ & \ & \ & \ & \ & \ & \ & \ & \ \\
    1 & \ & \ & \ & \ & \ & \ & \ & \color{cyan}\underline{1} & \ & \ & \ & \ & \ & \ & \ & \ & \ & \ & \ & \ & \ & \ \\
    \ & \color{red}\textbf{1} & \ & \ & \ & \ & \ & \ & \ & \ & \ & \ & \ & \ & \ & \ & \ & \ & \ & \ & \ & \ & \ \\
    \ & \color{red}\textbf{1} & \color{red}\textbf{1} & \ & \ & \ & \color{red}\textbf{1} & \ & \ & \ & \ & \ & \ & \ & \ & \ & \ & \ & \ & \ & \ & \ & \ \\
    \ & 1 & \ & \ & \ & \ & \ & \ & \color{cyan}\underline{1} & \ & \ & \ & \ & \ & \ & \ & \ & \ & \color{cyan}\underline{1} & \ & \ & \ & \ & \ \\
    \ & \ & \ & \color{red}\textbf{1} & \ & \ & \ & \ & \ & \ & \ & \ & \ & \ & \ & \ & \ & \ & \ & \ & \ & \ \\
    \ & \ & \ & \color{red}\textbf{1} & \ & \color{red}\textbf{1} & \ & \ & \ & \ & \ & \ & \ & \ & \ & \ & \ & \ & \ & \ & \ & \ & \ \\
    \ & \ & \ & 1 & \ & \ & \ & \ & \color{cyan}\underline{1} & \ & \ & \ & \ & \ & \ & \ & \ & \ & \ & \ & \color{cyan}\underline{1} & \ & \ & \ \\
    \ & \ & \ & \ & \color{red}\textbf{1} & \ & \ & \ & \ & \ & \ & \ & \ & \ & \ & \ & \ & \ & \ & \ & \ & \ \\
    \ & \ & \ & \ & \color{red}\textbf{1} & \color{red}\textbf{1} & \color{red}\textbf{1} & \ & \ & \ & \ & \ & \ & \ & \ & \ & \ & \ & \ & \ & \ & \ \\
    \ & \ & \ & \ & 1 & \ & \ & \ & \color{cyan}\underline{1} & \ & \ & \ & \ & \ & \ & \ & \ & \ & \ & \ & \ & \ & \color{cyan}\underline{1} & \ \\
    \ & \ & \ & \ & \ & \ & \ & \ & \ & \color{red}\textbf{1} & \ & \ & \ & \ & \ & \ & \ & \ & \ & \ & \ & \ & \ \\
    \ & \ & \ & \ & \ & \ & \ & \ & \ & \color{red}\textbf{1} & \ & \color{red}\textbf{1} & \ & \ & \ & \ & \ & \ & \ & \ & \ & \ & \ \\
    \ & \ & \ & \ & \ & \ & \ & \ & \ & 1 & \ & \ & \ & \ & \ & \ & \ & \color{cyan}\underline{1} & \ & \ & \ & \ & \ & \ \\
    \ & \ & \ & \ & \ & \ & \ & \ & \ & \ & \color{red}\textbf{1} & \ & \ & \ & \ & \ & \ & \ & \ & \ & \ & \ & \ \\
    \ & \ & \ & \ & \ & \ & \ & \ & \ & \ & \color{red}\textbf{1} & \color{red}\textbf{1} & \ & \ & \ & \color{red}\textbf{1} & \ & \ & \ & \ & \ & \ & \ \\
    \ & \ & \ & \ & \ & \ & \ & \ & \ & \ & 1 & \ & \ & \ & \ & \ & \ & \color{cyan}\underline{1} & \color{cyan}\underline{1} & \ & \ & \ & \ & \ \\
    \ & \ & \ & \ & \ & \ & \ & \ & \ & \ & \ & \ & \color{red}\textbf{1} & \ & \ & \ & \ & \ & \ & \ & \ & \ & \ \\
    \ & \ & \ & \ & \ & \ & \ & \ & \ & \ & \ & \ & \color{red}\textbf{1} & \ & \color{red}\textbf{1} & \ & \ & \ & \ & \ & \ & \ & \ \\
    \ & \ & \ & \ & \ & \ & \ & \ & \ & \ & \ & \ & 1 & \ & \ & \ & \ & \color{cyan}\underline{1} & \ & \ & \color{cyan}\underline{1} & \ & \ & \ \\
    \ & \ & \ & \ & \ & \ & \ & \ & \ & \ & \ & \ & \ & \color{red}\textbf{1} & \ & \ & \ & \ & \ & \ & \ & \ & \ \\
    \ & \ & \ & \ & \ & \ & \ & \ & \ & \ & \ & \ & \ & \color{red}\textbf{1} & \color{red}\textbf{1} & \color{red}\textbf{1} & \ & \ & \ & \ & \ & \ & \ \\
    \ & \ & \ & \ & \ & \ & \ & \ & \ & \ & \ & \ & \ & 1 & \ & \ & \ & \color{cyan}\underline{1} & \ & \ & \ & \ & \color{cyan}\underline{1} & \
\end{bsmallmatrix},} &&&\ldots.
\end{align*}
\caption{Selected matrices $M(s,t)$, colored as in the proof of Lemma \ref{avoiding}.}
\label{matrices}
\end{table}

\begin{lemma}
\label{avoiding}
    $M(s,t)$ avoids both $N$ and $N'$ for all positive integers $s$ and $t$.
\end{lemma}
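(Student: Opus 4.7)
The plan is to proceed by double induction on $s$ and $t$, mirroring the recursive construction of $M(s,t)$. The base cases are immediate: $M(1,t)$ has all its ones in a single column, so no sub-selection yields the three or four distinct one-columns needed for $N'$ or $N$; and $M(s,1)$ for $s>1$ is a $2\times 2$ identity matrix with only two ones, short of the four each pattern contains.

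For the inductive step at $(s,t)$ with $s,t>1$, I assume the claim for both $M(s,t-1)$ and $M(s-1,C(s,t-1))$, and suppose for contradiction that $M(s,t)$ contains $N$ or $N'$ via witnessing rows $r_1<r_2$ and columns. The construction places $C(s-1,C(s,t-1))$ copies of $M(s,t-1)$ along the diagonal---sharing no rows or columns with one another---together with $C(s,t)$ extra ones whose row-column placements are encoded by $M(s-1,C(s,t-1))$. I split on whether $r_1$ and $r_2$ lie in the same diagonal copy. In the same-copy case, I argue that the witnessing columns must all fall within that copy's column range, because any one at an outside column is either in a later diagonal copy (whose rows lie strictly below $r_2$) or an extra one strictly to the right of the current copy, neither of which can supply the alternating column structure required by $N$ or $N'$. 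The witness then reduces to an instance inside $M(s,t-1)$, contradicting the inductive hypothesis. In the different-copies case, since diagonal copies are column-disjoint, any one shared by the two rows in a single column must be an extra one; translating each horizontal block of $M(s,t)$ to its corresponding row of $M(s-1,C(s,t-1))$ and each extra-one column of $M(s,t)$ to the column of its encoding one in $M(s-1,C(s,t-1))$, the witness descends to an instance of $N$ or $N'$ there---again a contradiction.

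The main obstacle will be the bookkeeping at the interfaces where the two categories of ones meet: the bottom ``extra row'' of each horizontal block hosts both a leading one of a diagonal copy and an extra one, and the leading column of each vertical block is shared between the copy structure and the column of extra ones above it. Showing that no forbidden pattern stitches across these interfaces in a way that escapes both cases is the delicate heart of the argument. Treating $N$ and $N'$ together in a single induction is essential: a cross-copy witness mixing copy ones with extra ones may descend to an $N'$ rather than an $N$ in $M(s-1,C(s,t-1))$ (or vice versa), so both avoidance properties must propagate together.
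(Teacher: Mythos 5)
Your high-level strategy (double induction, treating $N$ and $N'$ together, reducing to one of the two sub-matrices) matches the paper, but there are two substantive gaps that your outline does not confront.

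First, you mischaracterize the construction. You write that $M(s,t)$ consists of diagonal copies of $M(s,t-1)$ ``together with $C(s,t)$ extra ones whose row-column placements are encoded by $M(s-1,C(s,t-1))$.'' In fact there are \emph{three} disjoint classes of ones: (i) the ones inherited from the $C(s-1,C(s,t-1))$ diagonal copies of $M(s,t-1)$; (ii) a genuine embedded copy of $M(s-1,C(s,t-1))$, occupying the extra rows and the columns of its vertical blocks, contributing $\lvert M(s-1,C(s,t-1))\rvert$ ones (not $C(s,t)$); and (iii) $C(s,t)$ extra ones, one per extra row, each sitting at the \emph{leading column of its own horizontal block} --- their column positions are dictated by the block structure of $M(s,t)$ itself, not encoded by $M(s-1,C(s,t-1))$. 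Your plan has only two classes and appears to merge (ii) and (iii). This matters because the inductive hypothesis on $M(s-1,C(s,t-1))$ can only be applied to the class-(ii) ones; if your ``translation'' collapses the class-(iii) ones onto $M(s-1,C(s,t-1))$ as well, the descended configuration need not be a sub-configuration of the avoiding matrix.

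Second, your two cases (same diagonal copy / different diagonal copies) do not partition the problem. Each diagonal copy occupies only the \emph{non-extra} rows of its row-group; the extra rows, where all class-(ii) and class-(iii) ones live, belong to no copy. In particular the unique color pattern that the paper shows $N$ must take has $(r_1,k_2),(r_1,k_4),(r_2,k_3)$ coming from the embedded $M(s-1,C(s,t-1))$ (both $r_1$ and $r_2$ are extra rows), while $(r_2,k_1)$ may come from any class. Your translation maps horizontal-block indices to rows of $M(s-1,C(s,t-1))$ and ``extra-one columns'' to columns there, but the ones inside copy-column ranges (such as $(r_2,k_1)$ when it is a class-(iii) one) are not in any vertical block of the embedded matrix, so the image is not a witness inside $M(s-1,C(s,t-1))$. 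The paper fixes this by locating an additional class-(ii) one in row $r_2$ at column $k\le k_2$ (the leading column of the relevant vertical block) and using it to \emph{replace} $(r_2,k_1)$, producing an all-blue $N$ or $N'$ inside the embedded copy. Without that replacement step, the descent fails.

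The paper's device for making all this manageable is the explicit three-coloring together with properties (a)--(e), which force the colors and turn the descent into a finite check. If you pursue your outline, you would effectively have to rediscover (a)--(e); as written, the plan is missing the blue/black distinction and the replacement idea, so the ``delicate heart'' you flag is a genuine gap, not mere bookkeeping.
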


\begin{proof}
    We argue by double induction on $s$ and $t$. For the base cases, observe that each matrix $M(1,t)$ or $M(s,1)$ avoids both $N$ and $N'$.

    For the inductive step, fix $s,t>1$, and suppose $M(s,t-1)$ and $M(s-1,C(s,t-1))$ each avoid $N,N'$. In $M(s,t)$, color each ``1" red, blue, or black: red if inherited from a copy of $M(s,t-1)$, blue if inherited from $M(s-1,C(s,t-1))$, or black otherwise. Our construction of $M(s,t)$ implies (a)-(d) below, while the inductive hypothesis implies (e). We \textbf{bold} red ones and \underline{underline} blue ones to assist colorblind readers.
    \begin{enumerate}[label=(\alph*)]
        \item A red ``1" cannot be in the same row as a blue or black ``1".
        \item A blue ``1" cannot be in the same column as a red or black ``1".
        \item Given a black ``1", any other ``1" must be in a higher row or a further-right column.
        \item $M(s,t)$ avoids the colored matrices
        \[
            \begin{bmatrix}
                \ & \color{red}\textbf{1}\\
                \color{cyan}\underline{1} & \
            \end{bmatrix},\qquad
            \begin{bmatrix}
                \ & \color{cyan}\underline{1} & \ \\
                \color{red}\textbf{1} & \ & \color{red}\textbf{1}
            \end{bmatrix}.
        \]
        \item $M(s,t)$ avoids the colored matrices
        \[
            \begin{bmatrix}
                \ & \color{red}\textbf{1} & \ & \color{red}\textbf{1}\\
                \color{red}\textbf{1} & \ & \color{red}\textbf{1} & \
            \end{bmatrix},\qquad
            \begin{bmatrix}
                \color{red}\textbf{1} & \ & \color{red}\textbf{1}\\
                \color{red}\textbf{1} & \color{red}\textbf{1} & \
            \end{bmatrix},\qquad
            \begin{bmatrix}
                \ & \color{cyan}\underline{1} & \ & \color{cyan}\underline{1}\\
                \color{cyan}\underline{1} & \ & \color{cyan}\underline{1} & \
            \end{bmatrix},\qquad
            \begin{bmatrix}
                \color{cyan}\underline{1} & \ & \color{cyan}\underline{1}\\
                \color{cyan}\underline{1} & \color{cyan}\underline{1} & \
            \end{bmatrix}.
        \]
    \end{enumerate}
    No coloring of $N'$ is consistent with (a)-(e), so $M(s,t)$ must avoid $N'$.
    
    It remains to prove that $M(s,t)$ avoids $N$. By way of contradiction, suppose $M(s,t)$ contains $N$. Let $1 \leq r_1 < r_2 \leq tC(s,t)$ be the rows and $1 \leq k_1 < k_2 < k_3 < k_4 \leq tC(s,t)$ the columns of $M(s,t)$ in which $N$ appears. Then the unique coloring of $N$ consistent with (a)-(e) is
    \[
        \begin{bNiceMatrix}[first-row,first-col]
            & k_1 & k_2 & k_3 & k_4\\
            r_1 & \ & \color{cyan}\underline{1} & \ & \color{cyan}\underline{1}\\
            r_2 & 1 & \ & \color{cyan}\underline{1} & \
        \end{bNiceMatrix}.
    \]
    
    Let $V_i$ be the first vertical block of $M(s-1,C(s,t-1))$ that appears in $M(s,t)$ to the right of column $k_1$. Let $k>k_1$ be the column containing the first blue ``1" in row $r_2$. By construction, $k$ is the first column of $V_i$. It follows that $k \leq k_2$; otherwise, the blue ``1" in position $(r_1,k_2)$ must have come from a vertical block $V_j$ of $M(s-1,C(s,t-1))$ with $j<i$, contradicting the minimality of $i$.

    $M(s,t)$ therefore contains one of the colored matrices
    \[
        \begin{bNiceMatrix}[first-row,first-col]
            & k_1 & & k_2 & k_3 & k_4\\
            r_1 & \ & \ & \color{cyan}\underline{1} & \ & \color{cyan}\underline{1}\\
            r_2 & 1 & \color{cyan}\underline{1} & \ & \color{cyan}\underline{1} & \
        \end{bNiceMatrix},\qquad
        \begin{bNiceMatrix}[first-row,first-col]
            & k_1 & k_2 & k_3 & k_4\\
            r_1 & \ & \color{cyan}\underline{1} & \ & \color{cyan}\underline{1}\\
            r_2 & 1 & \color{cyan}\underline{1} & \color{cyan}\underline{1} & \
        \end{bNiceMatrix}.
    \]
    This contradicts (e). We may conclude that $M(s,t)$ avoids $N$, and the inductive step is complete.
\end{proof}

For binary matrices $M$, recall that $\lvert M \rvert$ denotes the weight of $M$. By construction, $\lvert M(s,t) \rvert$ satisfies the following for all $s$ and $t$.
\begin{itemize}
    \item $\lvert M(1,t) \rvert = t$ for all $t$.
    \item $\lvert M(s,1) \rvert = 2$ for all $s>1$.
    \item $\lvert M(s,t) \rvert = C(s,t) + C(s-1,C(s,t-1))\lvert M(s,t-1)\rvert + \lvert M(s-1,C(s,t-1)) \rvert$ for $s,t>1$.
\end{itemize}

For $s,t>1$, the recursive formula for $\lvert M(s,t) \rvert$ may be rearranged to obtain
\[
    \frac{\vert M(s,t) \rvert}{C(s,t)} - \frac{\lvert M(s,t-1) \rvert}{C(s,t-1)} = 1 + \frac{\lvert M(s-1,C(s,t-1)) \rvert}{C(s,t-1)C(s-1,C(s,t-1))}.
\]
If we substitute the dummy variable $u$ for $t-1$ in the above, then sum the resulting equation from $u=1$ to $u=t-1$, we find
\begin{alignat}{3}
    &&\frac{\lvert M(s,t) \rvert}{C(s,t)} - \frac{\lvert M(s,1) \rvert}{C(s,1)} &= (t-1) + \sum_{u=1}^{t-1} \frac{\lvert M(s-1,C(s,u)) \rvert}{C(s,u)C(s-1,C(s,u))}\notag\\
    \mathllap{\Rightarrow\quad}&& \frac{\lvert M(s,t) \rvert}{C(s,t)} &= t + \sum_{u=1}^{t-1} \frac{\lvert M(s-1,C(s,u)) \rvert}{C(s,u)C(s-1,C(s,u))}\notag\\
    \mathllap{\Rightarrow\quad}&& \frac{\lvert M(s,t) \rvert}{tC(s,t)} &= 1 + \frac{1}{t}\sum_{u=1}^{t-1} \frac{\lvert M(s-1,C(s,u)) \rvert}{C(s,u)C(s-1,C(s,u))}.\label{density-recursion}
\end{alignat}

\begin{lemma}
\label{weight}
    For all positive integers $s$ and $t$ with $t \geq s$,
    \[
        \frac{s}{3} < \frac{\lvert M(s,t) \rvert}{tC(s,t)} \leq s.
    \]
\end{lemma}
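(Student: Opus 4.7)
The plan is to work with the density $\rho(s,t) := \lvert M(s,t)\rvert/(t\,C(s,t))$ and exploit the recursion
\[
\rho(s,t) \;=\; 1 + \frac{1}{t}\sum_{u=1}^{t-1} \rho(s-1, C(s,u))
\]
just derived as (\ref{density-recursion}). The upper bound $\rho(s,t)\leq s$ falls out of a simple induction on $s$: the case $s=1$ gives $\rho(1,t)=1$, and for $s\geq 2$ the inductive hypothesis $\rho(s-1,\cdot)\leq s-1$ plugs into the recursion to yield $\rho(s,t)\leq 1+\tfrac{t-1}{t}(s-1)<s$. This in fact holds for all $t\geq 1$, with no constraint $t\geq s$.

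For the lower bound $\rho(s,t)>s/3$, I would first record the elementary observation that $\rho(s,t)\geq 1$, which is immediate from the recursion (or from the fact that every row of $M(s,t)$ carries a leading ``1''). I would then induct on $s$. The base cases $s=1$ and $s=2$ are easy: $\rho(1,t)=1$, and since $C(2,u)\equiv 2$ we have $\rho(2,t)=2-1/t\geq 3/2$ for $t\geq 2$. For $s\geq 3$ and $t\geq s$, the crux is applying the inductive hypothesis $\rho(s-1,t')>(s-1)/3$ to each summand $\rho(s-1,C(s,u))$, which requires $C(s,u)\geq s-1$. Using $C(s,2)=2\,C(s-1,2)=2^{s-1}$ and the monotonicity of $C$ in its second argument (Lemma \ref{c-properties}i), this holds for every $u\geq 2$. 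The main obstacle I anticipate is the lone $u=1$ term, since $C(s,1)=2$ can be strictly smaller than $s-1$ once $s>3$ and the inductive hypothesis does not apply; I would handle it with just the crude universal bound $\rho(s-1,2)\geq 1$.

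Combining these estimates gives
\[
\rho(s,t) \;\geq\; 1 + \frac{1}{t}\!\left[\,1 + (t-2)\,\frac{s-1}{3}\,\right].
\]
Clearing denominators, the desired inequality $\rho(s,t)>s/3$ is equivalent to $2t+5-2s>0$, i.e., $t>s-\tfrac{5}{2}$, which is immediate from the hypothesis $t\geq s$. The only remaining thing to check is that $t-2\geq 1$ so that the displayed sum has content, but this is automatic since $t\geq s\geq 3$ in the inductive step. This closes the induction and establishes both bounds.
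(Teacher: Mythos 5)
Your proof is correct and follows essentially the same route as the paper: induction on $s$ driven by the recursion (\ref{density-recursion}), with the $u>1$ summands controlled via $C(s,u)\geq C(s,2)=2^{s-1}>s-1$ and the inductive hypothesis. The only cosmetic differences are that you retain the $u=1$ term with the crude bound $\rho(s-1,2)\geq 1$ (the paper simply drops it, landing on $t>s-1$ instead of your $t>s-\tfrac{5}{2}$), and you carry an extra explicit base case $s=2$ which the paper's choice of $s\geq 2$ inductive step renders unnecessary.
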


\begin{proof}
    First, we will prove that $\lvert M(s,t) \rvert/(tC(s,t)) > s/3$ for $t \geq s$. We argue by induction on $s$. The base case $s=1$ is immediate, as $\lvert M(1,t) \rvert/(tC(1,t)) = 1 > 1/3$ for all $t$.

    For the inductive step, fix $s>1$, and suppose that $\lvert M(s-1,t) \rvert/(tC(s-1,t)) > (s-1)/3$ for all $t\geq s-1$. For all $u>1$, we can observe that $C(s,u) \geq C(s,2) = 2^{s-1} > s-1$. Thus, by the inductive hypothesis, for all $u>1$,
    \[
        \frac{\lvert M(s-1,C(s,u)) \rvert}{C(s,u)C(s-1,C(s,u))} > \frac{s-1}{3}.
    \]
    Hence, by (\ref{density-recursion}), for all $t\geq s$,
    \[
        \frac{\lvert M(s,t) \rvert}{tC(s,t)} \geq 1+\frac{(s-1)(t-2)}{3t} > \frac{s}{3}.
    \]
    This completes the inductive step. We may conclude that $\lvert M(s,t) \rvert/(tC(s,t)) > s/3$ for $t \geq s$.

    Second, we will prove that $\lvert M(s,t) \rvert/(tC(s,t)) \leq s$ for all $s$ and $t$. We argue by induction on $s$. The base case $s=1$ is again immediate, as $\lvert M(1,t) \rvert/(tC(1,t)) = 1$ for all $t$.

    For the inductive step, fix $s>1$, and suppose that $\lvert M(s-1,t) \rvert/(tC(s-1,t))\leq s-1$ for all $t$. If $t=1$, we have $\lvert M(s,1) \rvert/C(s,1)=1\leq s$, so we are done. Suppose $t>1$. By the inductive hypothesis, for all $u$,
    \[
        \frac{\lvert M(s-1,C(s,u)) \rvert}{C(s,u)C(s-1,C(s,u))} \leq s-1.
    \]
    Thus, by (\ref{density-recursion}),
    \[
        \frac{\lvert M(s,t) \rvert}{tC(s,t)} \leq 1 + \frac{(s-1)(t-1)}{t} < s.
    \]
    This completes the inductive step. We may conclude that $\lvert M(s,t) \rvert/(tC(s,t)) \leq s$ for all $s$ and $t$. Hence, $s/3 < \lvert M(s,t) \rvert/(tC(s,t)) \leq s$ for all $s$ and $t\geq s$.
\end{proof}

\begin{theorem}[F\"uredi and Hajnal]
\label{ds-matrices}
    For arbitrarily large $n$, there exist $n \times n$ binary matrices $M$ avoiding $N$ such that $\lvert M \rvert = \Theta(n\alpha(n))$.
\end{theorem}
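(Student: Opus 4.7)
The plan is to apply the construction directly, taking $n = n_s \coloneq sC(s,s)$ for $s \to \infty$. By construction, $M(s,s)$ is an $n_s \times n_s$ binary matrix, and Lemma \ref{avoiding} tells us it avoids $N$. Lemma \ref{weight} applied at $t=s$ yields $s/3 < |M(s,s)|/n_s \leq s$, so $|M(s,s)| = \Theta(s\cdot n_s)$. Since $n_s \to \infty$ with $s$, producing arbitrarily large instances requires only that I prove $\alpha(n_s) = \Theta(s)$; then $|M(s,s)| = \Theta(n_s \alpha(n_s))$ along the subsequence $n = n_s$, which is exactly what ``arbitrarily large $n$'' demands.

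For the upper bound $\alpha(n_s) \leq s+1$, I invoke Lemma \ref{inverse-ackermann} at both arguments equal to $s$ to get $n_s = sC(s,s) < A(s,s+1)$. Then, assuming $s \geq 3$, Lemma \ref{ackermann-properties}(iii) (applied with $t = s > 2$) gives $A(s,s+1) \leq A(s+1,s)$, and monotonicity (Lemma \ref{ackermann-properties}(i)) gives $A(s+1,s) \leq A(s+1,s+1)$. Hence $A(s+1,s+1) > n_s$, so $\alpha(n_s) \leq s+1$.

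For the lower bound $\alpha(n_s) \geq s$, I will show $A(s-1,s-1) < n_s$. Lemma \ref{inverse-ackermann} with the same $s$ and $t = s-1$ gives $A(s-1,s-1) \leq (s-1)C(s,s-1)$, while Lemma \ref{c-properties}(ii) (valid for $s > 2$) gives $C(s,s) \geq 2C(s,s-1)$. Chaining these,
\[
    n_s = sC(s,s) \;\geq\; 2sC(s,s-1) \;\geq\; \frac{2s}{s-1}\,A(s-1,s-1) \;>\; A(s-1,s-1).
\]
Combined with the fact that $A(s',s')$ is non-decreasing in $s'$ for $s' \geq 3$ (itself an easy consequence of Lemma \ref{ackermann-properties}(iii)), this shows no $s' \leq s-1$ satisfies $A(s',s') \geq n_s$, hence $\alpha(n_s) \geq s$ for all sufficiently large $s$.

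Putting the two estimates together, $s \leq \alpha(n_s) \leq s+1$ for all large $s$, so $\alpha(n_s) = \Theta(s)$ and the bound $|M(s,s)| = \Theta(n_s\alpha(n_s))$ follows. The only real obstacle is the Ackermann bookkeeping in the two directions above; once the three supporting lemmas (\ref{ackermann-properties}, \ref{c-properties}, \ref{inverse-ackermann}) are dispatched at the right arguments, the remainder is routine substitution.
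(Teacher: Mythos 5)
Your proposal is correct and follows the paper's own proof: same choice of $n = sC(s,s)$ and $M = M(s,s)$, same invocation of Lemma \ref{avoiding} for pattern avoidance, Lemma \ref{weight} for $\lvert M \rvert = \Theta(s \cdot n)$, and Lemma \ref{inverse-ackermann} to tie $s$ to $\alpha(n)$. You have merely spelled out the Ackermann bookkeeping (the two-sided bound $s \leq \alpha(n_s) \leq s+1$ via Lemmas \ref{ackermann-properties} and \ref{c-properties}) that the paper leaves implicit in the final equality $\Theta(s^2 C(s,s)) = \Theta(n\alpha(n))$.
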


\begin{proof}
    Let $n=sC(s,s)$ and $M=M(s,s)$. Then $M$ is an $n \times n$ binary matrix avoiding $N$ (Lemma \ref{avoiding}). Furthermore, by Lemmas \ref{inverse-ackermann} and \ref{weight}, we have $\lvert M \rvert = \Theta(s^2C(s,s)) = \Theta(n\alpha(n))$.
\end{proof}

\begin{remark}
    The growth rate $\Theta(n\alpha(n))$ in Theorem \ref{ds-matrices} is the highest possible (see \cite[Theorem 8.1]{furedi92}).
\end{remark}


\subsection{Polytopal 3-complexes}
\label{ss-moment-curve}

To conclude this section, we use our pattern-avoiding matrices to construct polytopal 3-complexes with unbounded complexity. Recall that complexity is defined as $(f_{03}-20)/(f_0+f_3-10)$.

\begin{definition}
    The \emph{moment curve} $x:\mathbb{R} \to \mathbb{R}^d$ is the parametric curve defined by
    \[
        x(t) = (t, t^2 \ldots, t^d).
    \]
\end{definition}

\begin{remark}
    It is well known that any finite set of points on the moment curve is in general position (i.e. a subset of size $k$ has affine hull of dimension $\min\{k-1,d\}$).
\end{remark}

The most famous combinatorial property of the moment curve is \emph{Gale's evenness criterion}, which characterizes the facets of any polytope with vertices on the moment curve. We will use a related property derived from Gale's criterion and Radon's theorem by Lee and Nevo \cite[Propositions 2.2\&2.4]{lee24}.

\begin{proposition}[Lee and Nevo]
    Let $A,B \subset \mathbb{R}$ be finite sets with $\lvert A\cap B \rvert \leq d$, and let $x:\mathbb{R}\to\mathbb{R}^d$ be the moment curve. Then $\operatorname{conv}x(A) \cap \operatorname{conv} x(B) \supsetneq \operatorname{conv}x(A\cap B)$ if and only if there exist $t_1 < \cdots < t_{d+2}$ such that either
    \begin{itemize}
        \item $\{t_i \mid i \text{~odd}\} \subseteq A$ and $\{t_i \mid i \text{~even}\} \subseteq B$, or
        \item $\{t_i \mid i \text{~odd}\} \subseteq B$ and $\{t_i \mid i \text{~even}\} \subseteq A$.
    \end{itemize}
\end{proposition}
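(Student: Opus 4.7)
My plan is to prove the proposition using Radon's theorem together with the following classical consequence of the Vandermonde determinant, which I will cite/prove as a preliminary: among any $d+2$ distinct points $x(t_1),\ldots,x(t_{d+2})$ on the moment curve in $\mathbb{R}^d$ with $t_1<\cdots<t_{d+2}$, the unique (up to scaling) affine dependence has coefficients whose signs strictly alternate along this ordering. Consequently, such a set has exactly one Radon partition, and it separates the odd-indexed parameters from the even-indexed ones.

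For the backward direction, suppose we are given $t_1<\cdots<t_{d+2}$ with (say) $\{t_i : i \text{ odd}\}\subseteq A$ and $\{t_i : i \text{ even}\}\subseteq B$. Letting $p$ be the Radon point of $\{x(t_1),\ldots,x(t_{d+2})\}$, the alternating-sign fact above shows
\[
    p\in\operatorname{conv}x(\{t_i : i\text{ odd}\})\cap\operatorname{conv}x(\{t_i : i\text{ even}\})\subseteq\operatorname{conv}x(A)\cap\operatorname{conv}x(B).
\]
It remains to verify $p\notin\operatorname{conv}x(A\cap B)$. If it did lie there, I would combine a convex representation of $p$ from $x(A\cap B)$ with the convex representation of $p$ from, say, $x(\{t_i : i\text{ odd}\})$; after subtraction this produces an affine dependence among moment-curve points on a set of size at most $|A\cap B|+\lceil(d+2)/2\rceil$. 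The hypothesis $|A\cap B|\leq d$ combined with a careful accounting of overlaps (using general position to exclude a dependence on $\leq d+1$ points) yields a contradiction.

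For the forward direction, let $p\in\operatorname{conv}x(A)\cap\operatorname{conv}x(B)$ with $p\notin\operatorname{conv}x(A\cap B)$. Apply Carath\'eodory to fix minimal supports $S\subseteq A$ and $T\subseteq B$ with $p$ in the relative interior of both $\operatorname{conv}x(S)$ and $\operatorname{conv}x(T)$. I would first argue $S\cap T=\emptyset$: any common parameter $t^\ast\in S\cap T$ could be shifted from one combination to the other, reducing $|S|+|T|$ and violating minimality, unless doing so ultimately pushes $p$ into $\operatorname{conv}x(A\cap B)$, which is excluded. Subtracting the two convex representations then yields a true affine dependence supported on $x(S)\cup x(T)$; by general position on the moment curve, this forces $|S|+|T|=d+2$. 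The alternating-sign property of the unique Vandermonde dependence on $d+2$ moment-curve points now forces the parameters of $S$ and $T$ to interlace, producing the desired $t_1<\cdots<t_{d+2}$ with odd indices in $S\subseteq A$ and even indices in $T\subseteq B$ (or the symmetric case).

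The main obstacle I anticipate is verifying $S\cap T=\emptyset$ in the forward direction: a shared parameter $t^\ast$ contributes to both convex combinations, and ruling out every way of redistributing its weight without either trivializing the problem or landing $p$ inside $\operatorname{conv}x(A\cap B)$ requires careful case analysis. Once this disjointness is established, the rest is a clean application of the Vandermonde alternation and Radon's theorem.
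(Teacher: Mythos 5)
The paper does not prove this statement; it is quoted from Lee and Nevo \cite{lee24} (their Propositions 2.2 and 2.4), so there is no in-paper argument to compare against. Judged on its own terms, your backward direction has a genuine gap, and it is not of the kind that ``careful accounting'' repairs: the claim that the Radon point $p$ of $x(t_1),\ldots,x(t_{d+2})$ lies outside $\operatorname{conv}x(A\cap B)$ is false under the stated hypotheses. Take $d=3$, $A=\{1,2,3,4,5\}$, $B=\{2,4\}$, and $t_i=i$ for $i=1,\ldots,5$. Then $\lvert A\cap B\rvert=2\le d$, the odd $t_i$ lie in $A$, and the even $t_i$ lie in $B$, so the right-hand side of the stated equivalence holds. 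But $A\cap B=\{2,4\}$ is \emph{exactly} the even-indexed side of the (unique, sign-alternating) Radon partition of $\{x(1),\ldots,x(5)\}$, so $p\in\operatorname{conv}\{x(2),x(4)\}=\operatorname{conv}x(A\cap B)$; in fact $\operatorname{conv}x(B)\subseteq\operatorname{conv}x(A)$ here, so $\operatorname{conv}x(A)\cap\operatorname{conv}x(B)=\operatorname{conv}x(A\cap B)$ exactly and the ``$\supsetneq$'' conclusion fails outright. Subtracting your two convex representations of $p$ merely reproduces the known Radon dependence on $\{1,\ldots,5\}$; there is no contradiction to extract.

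This is more than a broken step --- it shows the proposition, read literally as you have read it, cannot be established, so you should re-examine either the $t_i$ condition (Lee and Nevo's version presumably requires the shuffle to alternate between $A\setminus B$ and $B\setminus A$, or imposes some equivalent disjointness from $A\cap B$) or the exact hypotheses in the source. Separately, your forward-direction step $S\cap T=\varnothing$ is also under-argued: for a witness $p$ chosen close to $x(t^\ast)$ with $t^\ast\in A\cap B$, the minimal supports on both sides will generically contain $t^\ast$, and the ``shifting'' maneuver you sketch moves $p$ rather than shrinking the supports. The cleaner route is to pick the witness $p$ from the outset as an actual Radon point of a well-chosen $(d+2)$-element subset of $x(A\cup B)$, rather than as a generic point of the strict intersection, and that choice is where the interlacing really gets produced.
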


\begin{corollary}
\label{incidence-matrix}
    Let $M$ be an $m\times n$ binary matrix with pairwise distinct rows, and let $x:\mathbb{R}\to\mathbb{R}^3$ be the moment curve. For $i=1,\ldots,m$, let $P_i=\operatorname{conv}\{x(j) \mid M_{ij}=1\}$. Then the collection of polytopes $P_1, \ldots, P_m$ and their faces is a polytopal 3-complex if and only if $M$ avoids the matrices
    \[
        \begin{bmatrix}
            1 & 1 & 1 & 1\\
            1 & 1 & 1 & 1
        \end{bmatrix},\qquad
        \begin{bmatrix}
            1 & \ & 1 & \ & 1\\
            \ & 1 & \ & 1 & \
        \end{bmatrix},\qquad
        \begin{bmatrix}
            \ & 1 & \ & 1 & \ \\
            1 & \ & 1 & \ & 1
        \end{bmatrix}.
    \]
\end{corollary}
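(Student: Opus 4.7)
The plan is to reduce the polytopal-$3$-complex condition on $\{P_1,\ldots,P_m\}$ to a pairwise condition, and then translate each failure of that pairwise condition into one of the three forbidden matrix patterns. Let $A_i=\{j:M_{ij}=1\}$, so $P_i=\operatorname{conv}x(A_i)$ is a cyclic $3$-polytope (or a lower-dimensional simplex when $|A_i|\leq 3$). Because any face of $P_i$ has its vertex set contained in $x(A_i)$, and similarly for $P_j$, the complex property is equivalent to demanding, for every $i\neq j$, both that $P_i\cap P_j=\operatorname{conv}x(A_i\cap A_j)$ (the reverse containment being automatic) and that this common set is a face of each of $P_i,P_j$.

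For the forward direction, I argue by contrapositive. If $M$ contains the $4\times 4$ all-ones pattern, then $|A_i\cap A_j|\geq 4$ for some $i\neq j$; since moment-curve points are in general position in $\mathbb{R}^3$, the set $\operatorname{conv}x(A_i\cap A_j)\subseteq P_i\cap P_j$ is a $3$-simplex with nonempty interior, so $P_i\cap P_j$ is full-dimensional and cannot be a proper face of the $3$-polytope $P_i$, forcing $A_i=A_j$ and contradicting distinct rows. If $M$ contains one of the alternating patterns, I obtain columns $c_1<\cdots<c_5$ such that $\{c_1,c_3,c_5\}\subseteq A_i$ and $\{c_2,c_4\}\subseteq A_j$ (or vice versa); the Lee--Nevo proposition then yields $\operatorname{conv}x(A_i)\cap\operatorname{conv}x(A_j)\supsetneq\operatorname{conv}x(A_i\cap A_j)$, contradicting the pairwise equality.

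For the backward direction, avoidance of the $4\times 4$ pattern gives $|A_i\cap A_j|\leq 3$, and avoidance of the alternating patterns together with Lee--Nevo gives $P_i\cap P_j=\operatorname{conv}x(A_i\cap A_j)$. To complete the argument I must verify that this set is a face of both $P_i$ and $P_j$. Cyclic $3$-polytopes are $1$-neighborly, so every vertex and every pair of vertices spans a face, disposing of the cases $|A_i\cap A_j|\leq 2$. The remaining case $|A_i\cap A_j|=3$ is where I expect the main obstacle: I need the common triangle to be a facet of $P_i$ (and of $P_j$). I plan to prove this by contradiction using Gale's evenness criterion for cyclic $3$-polytopes. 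If $\operatorname{conv}\{x(a),x(b),x(c)\}$ with $a<b<c$ is not a facet of $P_i$, there exist $u<w$ in $A_i\setminus\{a,b,c\}$ with an odd number (either $1$ or $3$) of the triangle's vertices lying strictly between $u$ and $w$. A short case analysis on which of $\{a,b,c\}$ lies in $(u,w)$ then locates five columns in rows $i,j$ of $M$ exhibiting one of the alternating patterns, contradicting our avoidance hypothesis. The hard part is this Gale-based case analysis; once it is carried out, the corollary follows from the reductions above.
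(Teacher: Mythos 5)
Your proposal is correct. The paper states this corollary without proof, presenting it as an immediate consequence of the Lee--Nevo proposition, so there is no internal argument to compare against; your reconstruction supplies the missing details. The reduction to the pairwise conditions $P_i\cap P_j=\operatorname{conv}x(A_i\cap A_j)$ and ``this set is a common face'' is valid, and both directions are sound. The Gale-based case analysis you flag as the main obstacle does go through: writing $A_i\cap A_j=\{a<b<c\}$, Gale's evenness criterion supplies $u<w$ in $A_i\setminus\{a,b,c\}$ with an odd number of $\{a,b,c\}$ in $(u,w)$, and since $u,w\notin A_j$, the four possible orderings $u<a<b<c<w$, $u<a<w<b<c$, $a<u<b<w<c$, and $a<b<u<c<w$ each produce five columns on which rows $i$ and $j$ exhibit one of the two alternating patterns (for instance in the first case the odd-position columns $u,b,w$ lie in $A_i$ and the even-position columns $a,c$ lie in $A_j$). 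Two small points to tidy: the all-ones pattern is $2\times 4$, not $4\times 4$; and in the forward direction you should note that $|A_i\cap A_j|\leq 3$ holds before invoking Lee--Nevo on the alternating pattern (if $|A_i\cap A_j|\geq 4$ the first-pattern argument already yields the contradiction), since the cited proposition carries that hypothesis.
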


\begin{theorem}
    For arbitrarily large $n$, there exist polytopal 3-complexes $Q$ such that $f_0(Q),f_3(Q) \leq n$ and $f_{03}(Q) = \Theta(n\alpha(n))$.
\end{theorem}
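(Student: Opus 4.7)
The plan is to apply Corollary~\ref{incidence-matrix} to $M = M(s,s)$ with $n = sC(s,s)$, producing a polytopal 3-complex $Q$ whose top-dimensional cells are convex hulls of points on the moment curve in $\mathbb{R}^3$.

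First I would check that $M(s,s)$ avoids each of the three forbidden patterns in Corollary~\ref{incidence-matrix}. This is immediate from Lemma~\ref{avoiding}: the pattern $\begin{bmatrix} 1 & \ & 1 & \ & 1\\ \ & 1 & \ & 1 & \ \end{bmatrix}$ contains $N$ after deleting the leftmost column, the pattern $\begin{bmatrix} \ & 1 & \ & 1 & \ \\ 1 & \ & 1 & \ & 1 \end{bmatrix}$ contains $N$ after deleting the rightmost column, and the $2\times 4$ all-ones pattern contains $N$ after toggling four of its entries from 1 to 0. Second, I would confirm that $M(s,s)$ has pairwise distinct rows for $s \geq 2$. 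The natural route is double induction on $s$ and $t$: rows lying in different horizontal blocks of $M(s,t)$ have distinct leading columns $c_i$; and within a single block, the top $t-1$ rows come from a block of $M(s,t-1)$ (distinct by the inductive hypothesis), while the bottom row of the block inherits its 1's from $M(s-1,C(s,t-1))$ in columns outside the $M(s,t-1)$ region, making it distinguishable from the others.

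With both hypotheses verified, Corollary~\ref{incidence-matrix} yields a polytopal 3-complex $Q$ whose polytopes are $P_i = \operatorname{conv}\{x(j) : M_{ij}=1\}$. Because $Q$ has at most $n$ vertices (one per column of $M$) and at most $n$ top-dimensional cells (one per row), we get $f_0(Q) \leq n$ and $f_3(Q) \leq n$. For the incidence count, observe that a row $i$ contributes to $f_{03}(Q)$ only when $P_i$ is 3-dimensional, which happens precisely when row $i$ has at least four ones (any four distinct points on the moment curve are affinely independent). Rows with fewer than four ones contribute at most $3n$ ones in total, so
\[
    f_{03}(Q) \geq |M(s,s)| - 3n.
\]
By Lemma~\ref{weight}, $|M(s,s)| = \Theta(s^2 C(s,s)) = \Theta(sn)$. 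By Lemma~\ref{inverse-ackermann} applied with $t=s$, we have $A(s-1,s) \leq n < A(s,s+1)$, which pinches $s \leq \alpha(n) \leq s+1$, giving $s = \Theta(\alpha(n))$. Combining the bounds, $f_{03}(Q) = \Theta(n\alpha(n))$.

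I expect the main obstacle to be cleanly proving pairwise distinctness of the rows of $M(s,s)$: the recursive construction interleaves three kinds of contributions (copies of $M(s,t-1)$, the single copy of $M(s-1,C(s,t-1))$, and the extra 1's), and tracking the distinguishing 1 in the ``bottom'' row of each block requires unwinding several levels of recursion. If the direct inductive argument turns out to be delicate, a robust backup is to simply pass to the submatrix obtained by deleting duplicate rows; since each row pattern appears only $O(1)$ times in the recursion, the total weight drops by at most a constant factor and the $\Theta(n\alpha(n))$ bound on $f_{03}$ is preserved.
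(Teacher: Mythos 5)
Your proposal is correct and takes essentially the same approach as the paper: apply Corollary~\ref{incidence-matrix} to $M(s,s)$ with $n = sC(s,s)$ on moment-curve points and derive the asymptotics from Lemmas~\ref{inverse-ackermann} and~\ref{weight}. The paper deletes the low-weight rows before forming $Q$ rather than retaining them and discounting their contribution to $f_{03}$, but the bookkeeping comes to the same thing. Your explicit check of the pairwise-distinct-rows hypothesis of Corollary~\ref{incidence-matrix} is a genuine addition that the paper's proof silently omits, and your inductive sketch is sound (note the base case must be $M(s,1)$ with $s>1$, since every row of $M(1,t)$ is identical, which is exactly why restricting to $s\geq 2$ is necessary; one should also observe that no row of any $M(s',t')$ is all zero, so the bottom row of each block really does carry a distinguishing ``blue'' one). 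One caution: the fallback claim that ``each row pattern appears only $O(1)$ times in the recursion'' is not justified as stated and should not be leaned on, though it turns out to be unnecessary since the direct induction works.
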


\begin{proof}
    Let $M$ be an $n\times n$ matrix as in Theorem \ref{ds-matrices}. Delete any row of $M$ containing fewer than four ones, and let $m\leq n$ be the number of remaining rows. Since we have deleted at most $3n$ many ones, we still have $\lvert M \rvert = \Theta(n\alpha(n))$.

    Let $P_i=\operatorname{conv}\{ x(j) \mid M_{ij}=1 \}$ for $i=1,\ldots,m$. Then each $P_i$ is a 3-polytope, since it is the convex hull of at least four points in general position. Let $Q$ be the collection of 3-polytopes $P_1, \ldots, P_m$ and their faces. By Corollary \ref{incidence-matrix}, $Q$ is a polytopal 3-complex.

    We have $f_3(Q)=m$, and $f_0(Q)$ is the number of nonempty columns of $M$. Thus, $f_0(Q),f_3(Q) \leq n$. Finally, by construction, we have $f_{03}(Q)=\lvert M \rvert = \Theta(n\alpha(n))$.
\end{proof}


\section{Shellable spheres}
\label{s-main-section}

This section is dedicated to our main construction: a family of shellable, dual shellable 3-spheres with unbounded fatness. We obtain these spheres by gluing together balls. Where we previously constructed a matrix from two smaller matrices, entering ones where columns of the first matrix intersect rows of the second, we now construct a ball from two smaller balls, creating incidences between vertices of the first ball and facets of the second.

We begin by introducing new recursive functions that will describe combinatorial properties of our spheres (\S\ref{ss-functions}). We then give our main construction (\S\ref{ss-construction}). Next, we prove that our spheres are shellable and dual shellable (\S\ref{ss-shellable}) and give asymptotic bounds on their face numbers (\S\ref{ss-asymptotics}). We use these bounds to prove our main result: the existence of arbitrarily fat, shellable, dual shellable 3-spheres (\S\ref{ss-main-result}). Finally, we conjecture that a subfamily of our spheres can be realized as arbitrarily fat 4-polytopes (\S\ref{ss-questions}).

\subsection{Auxiliary functions}
\label{ss-functions}

We aim to construct 3-spheres using a similar double recursion as in Section \ref{s-binary-matrices}. In place of $C(s,t)$, we will use two new functions $K(s,t)$ and $K'(s,t)$.

\begin{definition}
\label{def-k}
    For positive integers $s$ and $t$, the functions $K(s,t)$ and $K'(s,t)$ are defined recursively as follows.
    \begin{itemize}
        \item $K(1,t)=2,\ K'(1,t)=5$ for all $t$.
        \item $K(s,1)=2^s,\ K'(s,1)=2^s+3$ for all $s$.
        \item For $s,t>1$,
            \begin{align*}
                K(s,t) &= K(s,t-1)K(s-1,K'(s,t-1)),\\
                K'(s,t) &= K'(s,t-1)K(s-1,K'(s,t-1)) + K'(s-1,K'(s,t-1)).
            \end{align*}
    \end{itemize}
\end{definition}

Values of $K(s,t)$ and $K'(s,t)$ for small $s$ and $t$ are given in Table \ref{K-table}. We note that there is a more concise way to define $K(s,t)$ and $K'(s,t)$ for $t \geq 0$ by setting $K(s,0)=2$ and $K'(s,0)=1$ for $s>1$, but we avoid this to remain consistent in our use of positive indices.

\begin{table}[t]
\[
    \begin{NiceArray}[t,first-row]{|c|lll|}
        \multicolumn{4}{c}{K(s,t)}\\
        \hline
        \diagbox{s}{t}& 1 & 2 & 3\\
        \hline
        1 & 2 & 2 & 2\\
        2 & 4 & 8 & 16\\
        3 & 8 & 2^{15} & 2^{7\cdot2^{13}+11}\\
        \hline
    \end{NiceArray}
    \qquad
    \begin{NiceArray}[t,first-row]{|c|lll|}
        \multicolumn{4}{c}{K'(s,t)}\\
        \hline
        \diagbox{s}{t}& 1 & 2 & 3\\
        \hline
        1 & 5 & 5 & 5\\
        2 & 7 & 19 & 43\\
        3 & 11 & 7\cdot2^{13}-5 & (7\cdot2^{12}-1)2^{7\cdot2^{13}-3}-5\\
        \hline
    \end{NiceArray}
\]
    \caption{Values of $K(s,t)$ and $K'(s,t)$ for $s,t < 4$.}
    \label{K-table}
\end{table}

\begin{observation}
\label{k2t}
    It is easy to check that for all positive integers $t$,
    \[
        K(2,t)=2^{t+1},\qquad K'(2,t)=3\cdot 2^{t+1}-5.
    \]
\end{observation}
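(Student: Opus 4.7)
The plan is to prove both identities simultaneously by induction on $t$. The base case $t=1$ reduces to checking the second bullet of Definition \ref{def-k} at $s=2$: we get $K(2,1) = 2^2 = 4$, matching $2^{1+1}$, and $K'(2,1) = 2^2 + 3 = 7$, matching $3 \cdot 2^2 - 5 = 7$.

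For the inductive step at some $t > 1$, I would substitute the hypothesis $K(2,t-1) = 2^t$ and $K'(2,t-1) = 3 \cdot 2^t - 5$ into the recursive rules at $s=2$. The key observation making this painless is that the first bullet of Definition \ref{def-k} makes $K(1,\cdot)$ and $K'(1,\cdot)$ constant functions of their second argument, equal to $2$ and $5$ respectively. Hence, in the $s=2$ recursion, both $K(s-1, K'(s,t-1)) = K(1, K'(2,t-1))$ and $K'(s-1, K'(s,t-1)) = K'(1, K'(2,t-1))$ evaluate to the constants $2$ and $5$, independent of the (potentially huge) value of $K'(2,t-1)$. The double recursion thus collapses into the simple linear recurrences $K(2,t) = 2 \, K(2,t-1)$ and $K'(2,t) = 2 \, K'(2,t-1) + 5$, which immediately yield $K(2,t) = 2 \cdot 2^t = 2^{t+1}$ and $K'(2,t) = 2(3 \cdot 2^t - 5) + 5 = 3 \cdot 2^{t+1} - 5$, completing the induction.

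There is no real obstacle here; the statement is labeled an observation precisely because the $s=1$ layer acts as a constant boundary that trivializes the double recursion as soon as $s=2$. I expect the author's actual write-up to be a short induction of essentially the form above, or simply a pointer to the two-line computation.
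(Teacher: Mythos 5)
Your proof is correct and is precisely the short induction the paper has in mind; the paper omits the argument entirely (it is labeled an ``Observation'' with no proof), and your observation that $K(1,\cdot)\equiv 2$ and $K'(1,\cdot)\equiv 5$ collapse the double recursion at $s=2$ into $K(2,t)=2K(2,t-1)$ and $K'(2,t)=2K'(2,t-1)+5$ is exactly the intended two-line check.
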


\begin{lemma}
\label{K-properties}
    Let $s$ and $t$ be positive integers.
    \begin{enumerate}[label=\roman*.]
        \item $K(s,t+1) \geq K(s,t)$.
        \item $K'(s,t+1)\geq K'(s,t)$.
        \item $K'(s,t+1)>2K'(s,t)$ for $s>1$.
        \item $K'(s,t)>K(s,t)$.
        \item $K(s,t)\geq2^{st-t+1}$.
    \end{enumerate}
\end{lemma}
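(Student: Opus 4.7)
The plan is to prove the five inequalities in the order in which they are listed so that later parts may cite earlier ones. Only part v is substantive: it requires a double induction. Parts i--iv all reduce to one observation about the recursion, plus a short induction in the case of iv. Before anything else, I would record the trivial uniform bound $K(s,t)\geq 2$, which holds for all positive $s,t$ by an easy induction from the base cases $K(1,t)=2$ and $K(s,1)=2^s$ (in the recursive step, $K(s,t)$ is a product of two factors, each of which is $\geq 2$ by induction). I will use this bound repeatedly below.

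For part i, the claim is trivial when $s=1$, and for $s>1$ the recursion rewrites as $K(s,t+1)=K(s,t)\,K(s-1,K'(s,t))$, whose second factor is $\geq 2$. For ii and iii, the analogous rewriting gives
\[
K'(s,t+1) = K'(s,t)\,K(s-1,K'(s,t)) + K'(s-1,K'(s,t))
\]
for $s>1$; the first summand is $\geq 2K'(s,t)$ and the second is strictly positive, which gives iii, and ii follows (equality in the case $s=1$). For iv, I would induct on $t$. After checking the base cases $s=1$ and $t=1$ by hand, subtracting the two recursions gives the clean identity
\[
K'(s,t) - K(s,t) = \bigl[K'(s,t-1) - K(s,t-1)\bigr]\,K(s-1,K'(s,t-1)) + K'(s-1,K'(s,t-1)),
\]
whose right-hand side is visibly positive by the inductive hypothesis together with the positivity of $K'$.

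The main obstacle is part v, where I would use double induction with outer variable $s$ and inner variable $t$. The base cases $s=1$ and $t=1$ are both equalities: $K(1,t)=2=2^{t-t+1}$ and $K(s,1)=2^s=2^{s-1+1}$. For the inductive step with $s,t>1$, combining the inner hypothesis on $(s,t-1)$ with the outer hypothesis on $(s-1,K'(s,t-1))$ via the recursion gives
\[
K(s,t) = K(s,t-1)\,K(s-1,K'(s,t-1)) \geq 2^{(s-1)(t-1)+1}\cdot 2^{(s-2)K'(s,t-1)+1}.
\]
The target bound $K(s,t)\geq 2^{(s-1)t+1}$ then reduces to checking $(s-2)K'(s,t-1)\geq s-2$, which is immediate: equality when $s=2$, and for $s\geq 3$ it becomes $K'(s,t-1)\geq 1$, trivially true. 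The only point requiring any attention is tracking which inductive hypothesis plays the role of outer versus inner, but no genuine difficulty is hidden in the arithmetic.
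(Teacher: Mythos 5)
Your proposal is correct and follows essentially the same route as the paper. The one small deviation is in part v: where you invoke the outer inductive hypothesis on $s-1$ to bound $K(s-1,K'(s,t-1))\geq 2^{(s-2)K'(s,t-1)+1}$ and then observe the exponent check is trivial, the paper instead bounds that factor more directly as $K(s-1,K'(s,t-1))\geq K(s-1,1)=2^{s-1}$ by monotonicity (part i), so its argument for v is a single induction on $t$ rather than a full double induction. Both routes yield the same exponent $2^{s-1}$ for the second factor and are equally valid; your clean subtraction identity in iv is a nice presentational touch but amounts to the same one-line calculation the paper does.
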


\begin{proof}
    We prove i-v in order.
    \begin{enumerate}[wide,label=\roman*.]
        \item By definition, $K(1,t+1)=2=K(1,t)$ for all $t$. If $s>1$, then
        \[
            K(s,t+1) = K(s,t)K(s-1,K'(s,t)) \geq K(s,t).
        \]
        \item By definition, $K'(1,t+1)=5=K'(1,t)$ for all $t$. If $s>1$, then
        \[
            K'(s,t+1) = K'(s,t)K(s-1,K'(s,t))+K'(s-1,K'(s,t)) > K'(s,t).
        \]
        \item Let $s>1$. It follows from i that $K(s-1,t) \geq K(s-1,1) \geq 2$ for all $t$. Thus,
        \[
            K'(s,t+1) = K'(s,t)K(s-1,K'(s,t))+K'(s-1,K'(s,t)) > 2K'(s,t).
        \]
        \item We know $K'(1,t)=5>2=K(1,t)$ for all $t$, and $K'(s,1)=2^s+3>2^s=K(s,1)$ for all $s$. The cases $s,t>1$ follow by induction:
        \[
            K'(s,t) = K'(s,t-1)K(s-1,K'(s,t-1))+K'(s-1,K'(s,t-1)) > K(s,t-1)K(s-1,K'(s,t-1)) = K(s,t).
        \]
        \item For $s=1$ and any $t$, we have $K(1,t)=2=2^{st-t+1}$. For $t=1$ and any $s$, we have $K(s,1)=2^s=2^{st-t+1}$. The cases $s,t>1$ follow by induction:
        \[
            K(s,t) = K(s,t-1)K(s-1,K'(s,t-1)) \geq 2^{s(t-1)-t+2}2^{s-1} = 2^{st-t+1}.\qedhere
        \]
    \end{enumerate}
\end{proof}

For convenience, we will generally use Lemma \ref{K-properties}i-ii without explicit reference.

\begin{lemma}
\label{K-quotient}
    For all positive integers $s$ and $t$,
    \[
        1 < \frac{K'(s,t)}{K(s,t)} < 1+2^{3-s}.
    \]
\end{lemma}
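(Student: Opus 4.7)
The lower bound $K'(s,t)/K(s,t) > 1$ is immediate from Lemma~\ref{K-properties}iv, so the substance of this lemma is the upper bound. My plan is to introduce the ``slack'' quantity
\[
E(s,t) := (1+2^{3-s})K(s,t) - K'(s,t),
\]
for which the desired inequality becomes $E(s,t) > 0$. I will in fact prove the sharper statement $E(s,t) \geq 5$ by double induction on $s$ and $t$.

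The base cases will compute exactly. Using $K(1,t)=2$ and $K'(1,t)=5$ gives $E(1,t) = 5\cdot 2 - 5 = 5$, and using $K(s,1)=2^s$ and $K'(s,1)=2^s+3$ gives $E(s,1) = (1+2^{3-s})\cdot 2^s - (2^s+3) = 2^s + 8 - 2^s - 3 = 5$. That $E$ lands on $5$ exactly along both boundary lines strongly suggests $5$ is the right constant to track.

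For the inductive step I fix $s,t>1$, assume $E(s-1,r) \geq 5$ for all $r$ and $E(s,t-1) \geq 5$, and substitute the recursive formulas from Definition~\ref{def-k} into $E(s,t)$. Writing $A=K(s,t-1)$, $B=K'(s,t-1)$, $C=K(s-1,B)$, $D=K'(s-1,B)$, the recursions say $K(s,t)=AC$ and $K'(s,t)=BC+D$, so
\[
E(s,t) = \bigl[(1+2^{3-s})A - B\bigr]\,C - D = E(s,t-1)\cdot C - \bigl[(1+2^{4-s})C - E(s-1,B)\bigr],
\]
which rearranges to the key identity
\[
E(s,t) = \bigl[E(s,t-1) - (1+2^{4-s})\bigr]\,C + E(s-1,B).
\]
Since $s\geq 2$ we have $1+2^{4-s} \leq 5$, so the bracket is nonnegative by the inductive hypothesis on $t$; combined with $C \geq 1$ and $E(s-1,B) \geq 5$ (from the induction on $s$), this gives $E(s,t) \geq 5$, closing the induction.

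The main obstacle is getting the slack to propagate cleanly. A direct telescoping of $K'(s,t)/K(s,t)$ turns out to be too lossy at $s=2$, where $K'(2,t)/K(2,t) = 3 - 5/2^{t+1}$ actually approaches the claimed bound $1+2^{3-2}=3$ from below, leaving no room for error. Tracking the additive slack $E$ rather than the ratio forces exact cancellation at $s=2$: the constant $5$ is calibrated precisely so that $5 - (1+2^{4-s}) \geq 0$ on the inductive range $s \geq 2$, with equality at $s=2$.
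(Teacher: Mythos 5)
Your proof is correct, and it takes a genuinely different route from the paper. The paper telescopes the ratio $K'(s,t)/K(s,t)$ into an explicit formula and then bounds the resulting sum by a geometric series via the estimate $K(s,u)\geq 2^{su-u+1}$; this turns out to be too lossy at $s=2$ (where, as you observe, the ratio $3-5/2^{t+1}$ approaches the bound $3$), so the paper must treat $s=1$ and $s=2$ both as base cases and only start the geometric-series argument at $s>2$. You instead track the additive slack $E(s,t)=(1+2^{3-s})K(s,t)-K'(s,t)$ and discover it satisfies the exact recurrence $E(s,t)=\bigl[E(s,t-1)-(1+2^{4-s})\bigr]K(s-1,K'(s,t-1))+E(s-1,K'(s,t-1))$, so that a single double induction with base cases $E(1,t)=E(s,1)=5$ closes immediately, with no geometric estimates and no special treatment of $s=2$ (indeed $E(2,t)=5$ identically, which is exactly why the constant $5$ propagates with equality there). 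This is tighter and more self-contained; what the paper's approach buys in exchange is the standalone formula for $K'(s,t)/K(s,t)$ as a partial sum, analogous to the formulas the paper also derives for $\lvert M(s,t)\rvert$ and the $F_i$, giving a uniform presentation across the paper's several telescoping arguments.
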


\begin{proof}
    The inequality $K'(s,t)/K(s,t)>1$ follows immediately from Lemma \ref{K-properties}iv. It remains to prove that $K'(s,t)/K(s,t)<1+2^{3-s}$.

    For $s,t>1$, the recursive formulas for $K(s,t)$ and $K'(s,t)$ imply
    \[
        \frac{K'(s,t)}{K(s,t)} - \frac{K'(s,t-1)}{K(s,t-1)} = \frac{1}{K(s,t-1)} \cdot \frac{K'(s-1,K'(s,t-1))}{K(s-1,K'(s,t-1))}.
    \]
    Substituting the dummy variable $u$ for $t-1$ and summing from $u=1$ to $u=t-1$, we find
    \begin{alignat}{3}
        &&\frac{K'(s,t)}{K(s,t)} - \frac{K'(s,1)}{K(s,1)} &= \sum_{u=1}^{t-1} \frac{1}{K(s,u)} \cdot \frac{K'(s-1,K'(s,u))}{K(s-1,K'(s,u))}\notag\\
        \mathllap{\Rightarrow\quad}&& \frac{K'(s,t)}{K(s,t)} &= 1+3\cdot2^{-s}+\sum_{u=1}^{t-1} \frac{1}{K(s,u)} \cdot \frac{K'(s-1,K'(s,u))}{K(s-1,K'(s,u))}.\label{K-quotient-formula}
    \end{alignat}

    We proceed by induction on $s$. Our base cases are $s=1,2$. For $s=1$, we have $K'(1,t)/K(1,t)\allowbreak = 5/2\allowbreak < 5\allowbreak = 1+2^{3-s}$. For $s=2$, we have $K'(2,t)/K(2,t) < 3 = 1+2^{3-s}$ (see Observation \ref{k2t}).
    
    For the inductive step, fix $s>2$, and suppose that for all $t$,
    \[
        \frac{K'(s-1,t)}{K(s-1,t)} < 1+2^{4-s}.
    \]
    Note that this implies $K'(s-1,t)/K(s-1,t) < 3$.

    If $t=1$, we have $K'(s,1)/K(s,1)=1+3\cdot2^{-s}<1+2^{3-s}$, so we are done. If $t>1$, then by (\ref{K-quotient-formula}), the inductive hypothesis, and Lemma \ref{K-properties}v,
    \begin{align*}
        \frac{K'(s,t)}{K(s,t)} &< 1+3\cdot 2^{-s} + 3\sum_{u=1}^{t-1} \frac{1}{K(s,u)}\\
        &\leq 1+3\cdot 2^{-s} + 3\sum_{u=1}^{t-1} 2^{u-su-1}\\
        &< 1 + 3\cdot 2^{-s} + \frac{3}{2^s-2}\\
        &< 1 + 2^{3-s}.
    \end{align*}
    This completes the inductive step. We may conclude that $1 < K'(s,t)/K(s,t) < 1+2^{3-s}$ for all $s$ and $t$.
\end{proof}

\begin{lemma}
\label{K-Ackermann}
    For all positive integers $s$ and $t$,
    \[
        A(s,t) \leq tK(s,t) < tK'(s,t) < A(s+1,t+2).
    \]
\end{lemma}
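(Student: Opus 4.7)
The middle inequality $tK(s,t) < tK'(s,t)$ is immediate from Lemma \ref{K-properties}iv, so the plan is to prove the outer two inequalities by double induction on $s$ and $t$, parallel in spirit to Lemma \ref{inverse-ackermann}.

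For the lower bound $A(s,t) \leq tK(s,t)$, I would first establish an auxiliary claim: $A(s,t) \leq K'(s,t)$ for $s \geq 2$. This is proved by its own double induction, with base $s=2$ reducing to $2^t \leq 6\cdot 2^t - 5$, and inductive step chaining
\[
    A(s,t) = A(s-1,A(s,t-1)) \leq A(s-1,K'(s,t-1)) \leq K'(s-1,K'(s,t-1)) \leq K'(s,t),
\]
where the middle step uses the inductive hypothesis on $s-1$ and the last step uses that $K'(s-1,K'(s,t-1))$ is one summand in the definition of $K'(s,t)$. With the auxiliary in hand, the main inequality proceeds similarly: $A(s,t) = A(s-1,A(s,t-1)) \leq A(s,t-1) \cdot K(s-1,A(s,t-1))$ by the inductive hypothesis on $s-1$, and then the subinductive hypothesis $A(s,t-1) \leq (t-1)K(s,t-1)$ together with the auxiliary $A(s,t-1) \leq K'(s,t-1)$ (which is what allows the argument of $K(s-1,\cdot)$ to be replaced by $K'(s,t-1)$) collapses everything to $(t-1) K(s,t) \leq tK(s,t)$.

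For the upper bound $tK'(s,t) < A(s+1,t+2)$, both bases are direct: $s=1$ amounts to $5t < 2^{t+2}$, and $t=1$ amounts to $2^s+3 < 2^{s+2} \leq A(s+1,3)$ via Lemma \ref{ackermann-properties}iv. For the inductive step, set $u = K'(s,t-1)$ and bound $K(s-1,\cdot)$ by $K'(s-1,\cdot)$ in the recursion, yielding $K'(s,t) \leq (u+1)K'(s-1,u)$. Applying the inductive hypothesis on $s-1$ at $t'=u$ gives $uK'(s-1,u) < A(s,u+2)$, whence $K'(s,t) < 2A(s,u+2) \leq A(s,u+3)$; here I would use that $A(s,m+1) \geq 2A(s,m)$ for $s \geq 2$, which follows from $A(s-1,m) \geq A(1,m) = 2m$ (monotonicity of $A$ in its first argument). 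Multiplying by $t$ and absorbing the factor of $t$ as $\lceil \log_2 t \rceil$ extra Ackermann steps gives $tK'(s,t) < A(s, u+3+\lceil\log_2 t\rceil)$, and it remains to verify $u + 3 + \lceil\log_2 t\rceil \leq A(s+1,t+1)$, which then yields the target $A(s, A(s+1,t+1)) = A(s+1,t+2)$.

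The main obstacle is precisely this last verification. For $t \geq 3$ the subinductive hypothesis $(t-1) u < A(s+1,t+1)$ combined with the sheer size of $A(s+1,t+1)$ easily absorbs the additive $3 + \lceil\log_2 t\rceil$. The delicate case is $t=2$, where the subinductive bound degenerates to $u < A(s+1,3)$ and cannot on its own give $u+4 \leq A(s+1,3)$. Here I would plug in the explicit value $u = K'(s,1) = 2^s+3$ and verify $2^s+7 \leq A(s+1,3)$ using $A(s+1,3) \geq 2^{s+2}$; the inequality requires $s \geq 2$, which is precisely the regime of the inductive step, so there is no conflict with the $s=1$ base case. Once this corner is dispatched, the full induction closes.
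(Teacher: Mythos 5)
Your proof is correct, but it takes a genuinely different route from the paper's on both halves. For the lower bound $A(s,t)\leq tK(s,t)$, you first isolate an auxiliary claim $A(s,t)\leq K'(s,t)$ (for $s\geq 2$), prove it by its own double induction, and then feed it into a second induction. The paper instead proves the stronger statement $K(s,t)\geq A(s,t)$ directly for $s>1$ by one double induction; the auxiliary fact $K'(s,t)\geq A(s,t)$ then falls out for free from Lemma~\ref{K-properties}iv rather than needing its own argument. For the upper bound, the contrast is larger. The paper deliberately strengthens the target to $tK'(s,t)\leq A(s+1,t+2)-3$, takes base cases $s=1,2$ and $t=1$, and in the inductive step uses Lemma~\ref{K-properties}iii to fold the factor $2A(s+1,t+1)-4$ into $K'(s-1,\cdot)$ by incrementing its argument; the $-3$ slack is exactly what makes the recursion close without fuss. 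You instead prove the weak strict inequality $tK'(s,t)<A(s+1,t+2)$ directly from bases $s=1$, $t=1$, absorbing the factor of $t$ into $\lceil\log_2 t\rceil$ extra Ackermann steps (using $A(s,m+1)\geq 2A(s,m)$ for $s\geq 2$), which then forces a delicate split between $t\geq 3$ (where $(t-1)u<A(s+1,t+1)$ leaves plenty of room) and $t=2$ (where you must plug in $u=2^s+3$ explicitly). Your version is correct and more self-contained in not needing to guess the additive slack, but the paper's strengthened induction is tighter and avoids the $\log$ bookkeeping and $t=2$ corner case entirely. If you wanted to polish your $t\geq 3$ case, note that Lemma~\ref{ackermann-properties}ii--iii give $A(s+1,t+1)\geq 2^{t+2}$ for $t\geq 2$, which immediately dominates $6+2\lceil\log_2 t\rceil$, so the ``sheer size'' remark can be made precise in one line.
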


\begin{proof}
    The inequality $tK(s,t)<tK'(s,t)$ follows immediately from Lemma \ref{K-properties}iv. Thus, it will suffice to prove that $A(s,t)\leq tK(s,t)$ and $tK'(s,t)<A(s+1,t+2)$.

    First, we will prove that $tK(s,t)\geq A(s,t)$. For $s=1$, this is immediate, as $tK(1,t) = 2t = A(1,t)$.

    For $s>1$, we will prove the stronger statement that $K(s,t) \geq A(s,t)$. We will do so by double induction on $s$ and $t$, with base cases $s=2$ and $t=1$. The base case $s=2$ is immediate, as $K(2,t)=2^{t+1}>2^t=A(2,t)$. The base case $t=1$ is also immediate, as $K(s,1)=2=A(s,1)$.

    For the inductive step, fix $s>2$, and suppose $K(s-1,t)\geq A(s-1,t)$ for all $t$. Fix $t>1$ as well, and suppose $K(s,t-1) \geq A(s,t-1)$. Then
    \begin{align*}
        K(s,t) &= K(s,t-1)K(s-1,K'(s,t-1))\\
        &\geq K(s-1,K'(s,t-1))\\
        &\geq A(s-1,A(s,t-1))\\
        &= A(s,t).
    \end{align*}
    This completes the inductive step. We may conclude that $K(s,t) \geq A(s,t)$ for all $s>1$, so $tK(s,t) \geq A(s,t)$ for all $s$ and $t$.

    Second, we will prove that $tK'(s,t) \leq A(s+1,t+2)-3$. We will again use double induction on $s$ and $t$. The base cases are $s=1,2$ and $t=1$, which we check in order.

    $s=1$:
    \[
        tK'(1,t) = 5t \leq 2^{t+2}-3 = A(2,t+2)-3.
    \]
    $s=2$:
    \[
        tK'(2,t) = t(3\cdot2^{t+1}-5) < 2^{2(t+1)}-3 \leq 2^{A(3,t+1)}-3 = A(3,t+2)-3.
    \]
    $t=1$, using Lemma \ref{ackermann-properties}iv:
    \[
        K'(s,1) = 2^s+3 \leq 2^{s+2}-3 \leq A(s+1,3)-3.
    \]

    For the inductive step, fix $s>2$, and suppose $tK'(s-1,t) \leq A(s,t+2)-3$ for all $t$. Fix $t>1$ as well, and suppose $(t-1)K'(s,t-1) \leq A(s+1,t+1)-3$.

    By the inductive hypothesis,
    \begin{align*}
        t( K'(s,t-1)+1 ) &= tK'(s,t-1)+(t-2)+2\\
        &\leq (2t-2)K'(s,t-1)+2\\
        &\leq 2A(s+1,t+1)-4.
    \end{align*}
    Using the above, Lemma \ref{K-properties}iii, and the inductive hypothesis,
    \begin{align*}
        tK'(s,t) &= t(K'(s,t-1)K(s-1,K'(s,t-1))+K'(s-1,K'(s,t-1)))\\
        &< t( K'(s,t-1)+1 )K'(s-1,K'(s,t-1))\\
        &\leq ( 2A(s+1,t+1)-4 )K'(s-1,A(s+1,t+1)-3)\\
        &< ( A(s+1,t+1)-2 )K'(s-1,A(s+1,t+1)-2)\\
        &\leq A(s,A(s+1,t+1))-3\\
        &= A(s+1,t+2)-3.
    \end{align*}
    This completes the inductive step. We may conclude that $tK'(s,t)\leq A(s+1,t+2)-3$ for all $s$ and $t$.
\end{proof}

\subsection{Construction}
\label{ss-construction}

We will construct a family of CW 3-complexes $X(s,t)$, where $s$ and $t$ range over the positive integers. The complex $X(s,t)$ will be a 3-ball for $t>1$. Subsequently, we will use $X(s,t)$ to construct our desired family of 3-spheres $S(s,t)$.

We will draw $X(s,t)$ from a ``bird's-eye view" in $\mathbb{R}^3$, so the boundary of each facet will have a well-defined top and bottom, as will the boundary of $X(s,t)$ itself for $t>1$. In our drawing, each ridge and facet will have the shape of a convex polygon, giving $X(s,t)$ the appearance of a polytopal complex. This is an optical illusion; we do not necessarily claim that $X(s,t)$ can be realized with convex polytopes in $\mathbb{R}^3$.

\begin{figure}[p]
    \centering
    \tikzmath{\r = 8;}

    \colorlet{root}{lightgray}
    \colorlet{tooth1}{red}
    \colorlet{tooth2}{yellow}
    \colorlet{tooth3}{blue}
    \colorlet{fillet1}{purple}
    \colorlet{fillet2}{orange}
    \colorlet{fillet3}{green}
 
    \begin{tikzpicture}[line cap=round, line join=round, thick]
        \coordinate (o) at (-90:\r);

        \coordinate (a1) at (-50:\r);
        \coordinate (a2) at (-10:\r);
        \coordinate (a3) at (30:\r);
        \coordinate (a4) at (70:\r);
        \coordinate (a5) at (110:\r);
        \coordinate (a6) at (150:\r);
        \coordinate (a7) at (190:\r);
        \coordinate (a8) at (230:\r);

        \coordinate (b1) at (-40:\r);
        \coordinate (c11) at (-30:\r);
        \coordinate (c12) at (-20:\r);

        \coordinate (b2) at (0:\r);
        \coordinate (c21) at (10:\r);
        \coordinate (c22) at (20:\r);

        \coordinate (b3) at (40:\r);

        \coordinate (b5) at (120:\r);
        \coordinate (c51) at (130:\r);
        \coordinate (c52) at (140:\r);

        \coordinate (b6) at (160:\r);
        \coordinate (c61) at (170:\r);
        \coordinate (c62) at (180:\r);

        \coordinate (b7) at (200:\r);

        \draw[{Latex}-{Latex}, shorten <=2cm, shorten >=10pt, line width=2.4] (o) -- (a1) -- node[sloped,below=3pt] {\textbf{profile}} (b1) -- (c11) -- (c12) -- (a2);

        \path[fill=root] (o) -- (a1) -- (a2) -- (a3) -- (a4) -- (a5) -- (a6) -- (a7) -- (a8) -- cycle;

        \path[fill=tooth1] (a1) -- (b1) -- (c12) -- (a2) -- cycle;
        \path[fill=tooth2] (a2) -- (b2) -- (c22) -- (a3) -- cycle;
        \path[fill=tooth3] (a3) -- (b3) -- (a4) -- cycle;
        \path[fill=tooth1] (a5) -- (b5) -- (c52) -- (a6) -- cycle;
        \path[fill=tooth2] (a6) -- (b6) -- (c62) -- (a7) -- cycle;
        \path[fill=tooth3] (a7) -- (b7) -- (a8) -- cycle;

        \path[fill=fillet1] (b1) -- (c11) -- (c12) -- cycle;
        \path[fill=fillet2] (b2) -- (c21) -- (c22) -- cycle;
        \path[fill=fillet1] (b5) -- (c51) -- (c52) -- cycle;
        \path[fill=fillet2] (b6) -- (c61) -- (c62) -- cycle;
        
        \draw[dashed,very thick] (a1) -- (a2) -- (a3);
        \draw[dashed,ultra thick] (a3) -- (a4);
        \draw[dashed,very thick] (a5) -- (a6) -- (a7);
        \draw[dashed,ultra thick] (a7) -- (a8);
        \draw[loosely dashed,very thick] (o) -- (a2);
        \draw[loosely dashed,ultra thick] (o) -- (a3);
        \draw[loosely dashed,ultra thick] (o) -- (a4);
        \draw[loosely dashed,very thick] (o) -- (a5);
        \draw[loosely dashed,very thick] (o) -- (a6);
        \draw[loosely dashed,very thick] (o) -- (a7);
        
        \draw[densely dashed] (b1) -- (a2);
        \draw[densely dashed] (b1) -- (c12);

        \draw[densely dashed] (b2) -- (a3);
        \draw[densely dashed] (b2) -- (c22);

        \draw[densely dashed] (b5) -- (a6);
        \draw[densely dashed] (b5) -- (c52);

        \draw[densely dashed] (b6) -- (a7);
        \draw[densely dashed] (b6) -- (c62);

        \path[fill=tooth2] (c12) -- (a2) -- (b2) -- cycle;
        \path[fill=tooth3] (c22) -- (a3) -- (b3) -- cycle;
        \path[fill=tooth2] (c52) -- (a6) -- (b6) -- cycle;
        \path[fill=tooth3] (c62) -- (a7) -- (b7) -- cycle;

        \path[fill=fillet2] (c11) -- (c12) -- (b2) -- cycle;
        \path[fill=fillet3] (c21) -- (c22) -- (b3) -- cycle;
        \path[fill=fillet2] (c51) -- (c52) -- (b6) -- cycle;
        \path[fill=fillet3] (c61) -- (c62) -- (b7) -- cycle;
        
        \draw (a1) -- (b1) -- (c11) -- (c12) -- (a2) -- (b2) -- (c21) -- (c22) -- (a3) -- (b3) -- (a4);
        \draw (a5) -- (b5) -- (c51) -- (c52) -- (a6) -- (b6) -- (c61) -- (c62) -- (a7);
        \draw[ultra thick] (a7) -- (b7) -- (a8);
        \draw[very thick] (a8) -- (o) -- (a1);
        \draw[very thick] (a4) -- (a5);

        \draw (b2) -- (c11);
        \draw (b2) -- (c12);
        \draw (b3) -- (c21);
        \draw (b3) -- (c22);
        \draw (b6) -- (c51);
        \draw (b6) -- (c52);
        \draw (b7) -- (c61);
        \draw (b7) -- (c62);

        \fill (o) circle (2.5pt) node[below=3pt] {hub};

        \fill (a3) circle (2.5pt);
        \fill (a4) circle (2.5pt);
        
        \fill (c51) circle (2.5pt) (c52) circle (2.5pt) (a6) circle (2.5pt);
        \draw[decorate,decoration={brace,raise=10pt}] (a6) -- node[above=13pt,sloped]{fillet} (c51);

        \path (o) -- (a3) -- (a4) -- cycle;
        \node at (barycentric cs:o=1,a3=1,a4=1) {\textbf{root triangle}};

        \node (roots) at (barycentric cs:o=0.25,a3=1,a4=1) {roots};
        \draw[->,shorten >=1cm] (roots) -- (a3);
        \draw[->,shorten >=1cm] (roots) -- (a4);

        \node (tooth) at (barycentric cs:a7=1,b7=1,a8=1) {};
        \node[right=4pt] at (tooth) {\textbf{tooth}};

        \fill (b7) circle (2.5pt) node[left=3pt] {tip};
    \end{tikzpicture}
    \caption{Part of the boundary of a hypothetical $X(s,t)$ with nine roots, six teeth, and four fillets of three vertices each. Adjacent ridges drawn in different colors (i.e. any pair of adjacent ridges pictured that includes a root triangle and a tooth, or includes neither) necessarily belong to distinct facets.}
    \label{fig-xst-nomenclature}
\end{figure}
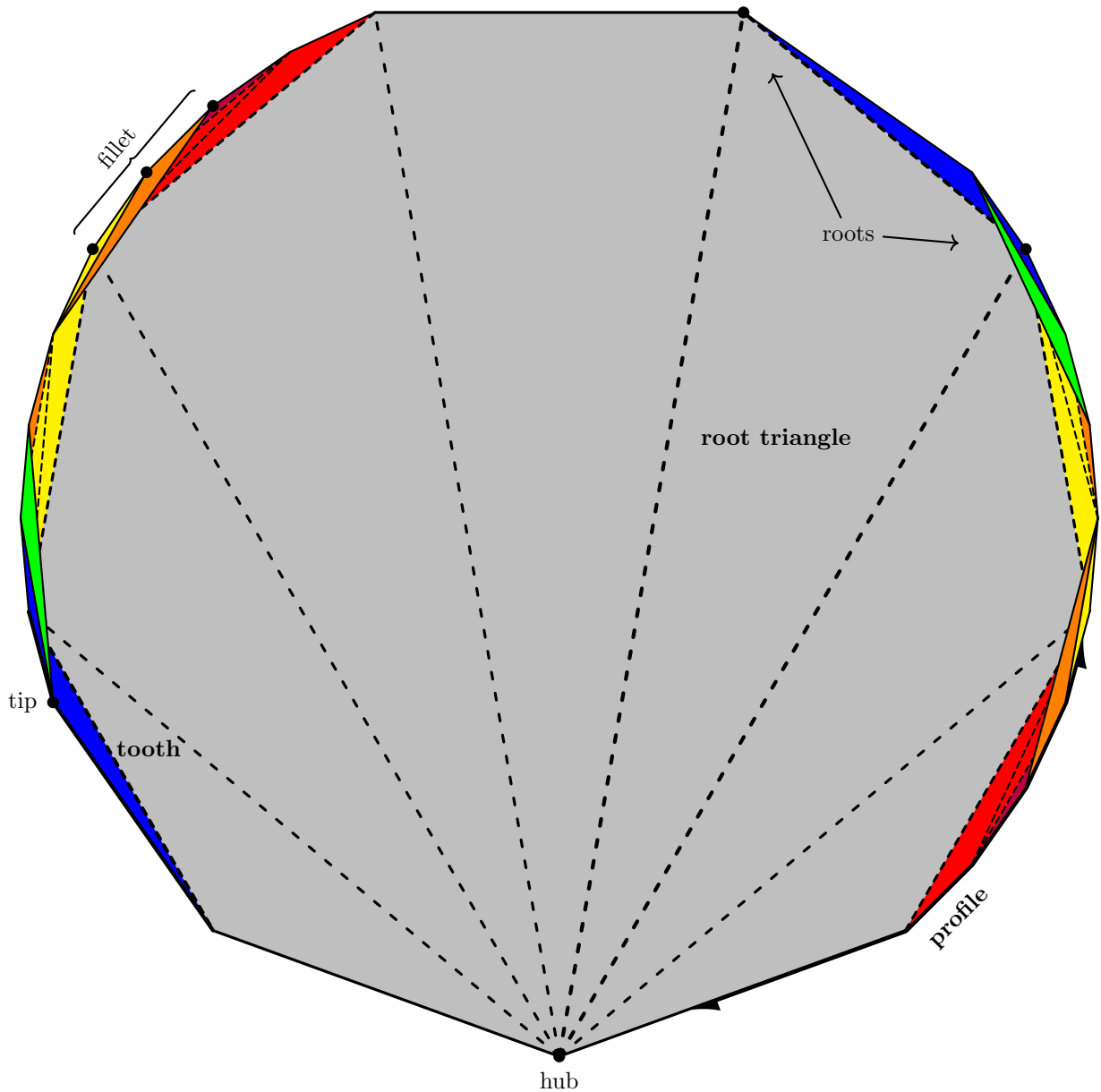

Our construction of $X(s,t)$ will ensure the following (see Figure \ref{fig-xst-nomenclature}).
\begin{enumerate}[label=(\alph*)]
    \item $X(s,t)$ is a 3-ball for $t>1$.\label{property-3ball}
    
    \item Projected onto the page, all vertices of $X(s,t)$ map to points on the unit circle, $X(s,t)$ maps to their convex hull, and the image of each face is the convex hull of the images of its vertices. The projection map acts homeomorphically on the top and bottom of $\partial X(s,t)$, as well as the top and bottom of the boundary of each facet. We call the boundary of the image of $X(s,t)$ the \emph{profile} of $X(s,t)$.\label{property-circle}
    
    \item The bottom of $\partial X(s,t)$ contains a fan of triangles $\bigtriangleup a_1 a_2 a_3, \allowbreak \bigtriangleup a_1 a_3 a_4, \allowbreak \ldots, \allowbreak \bigtriangleup a_1 a_{K'(s,t)-1} a_{K'(s,t)}$, where vertices $a_1, \ldots, a_{K'(s,t)}$ appear counterclockwise along the profile of $X(s,t)$, and $a_{K'(s,t)},a_1,a_2$ are consecutive. We call vertex $a_1$ the \emph{hub}, vertices $a_1,\ldots,a_{K'(s,t)}$ the \emph{roots}, and these triangles the \emph{root triangles} of $X(s,t)$.\label{property-root}
   
    \item For each index $1 < i < K'(s,t)$, one of the following holds:
    \begin{itemize}
        \item $a_i, a_{i+1}$ are adjacent on the profile of $X(s,t)$,
        \item the bottom of $\partial X(s,t)$ contains a triangle $\bigtriangleup a_i b_i a_{i+1}$ such that $a_i,b_i,a_{i+1}$ appear consecutively counterclockwise on the profile of $X(s,t)$, or
        \item the bottom of $\partial X(s,t)$ contains triangles $\bigtriangleup a_i b_i a_{i+1}$ and $\bigtriangleup b_i d_i a_{i+1}$ belonging to a common facet, vertices $a_i,b_i,d_i,a_{i+1}$ appear counterclockwise on the profile of $X(s,t)$, vertices $a_i,b_i$ are consecutive, vertices $d_i,a_{i+1}$ are consecutive, and vertices $a_i,d_i$ do not belong to a common ridge.
    \end{itemize}
    In the latter two cases, we call $\bigtriangleup a_i b_i a_{i+1}$ a \emph{tooth} and $b_i$ its \emph{tip}.\label{property-tooth}
    
    \item For exactly $K(s,t)$ many indices $2 < i < K'(s,t)$, there is a list of consecutive vertices $b_{i-1}, c_1,\allowbreak \ldots,\allowbreak c_{t-1},\allowbreak c_t=a_i,\allowbreak b_i$ ordered counterclockwise along the profile of $X(s,t)$ such that
    \begin{itemize}
        \item $c_t$ is a root, and $b_{i-1},b_i$ are tips of consecutive teeth that both contain $c_t$,
        \item triangles $\bigtriangleup b_{i-1} c_1 c_2, \bigtriangleup b_{i-1} c_2 c_3, \ldots, \bigtriangleup b_{i-1} c_{t-1} c_t$ are on the bottom of $\partial X(s,t)$,
        \item triangles $\bigtriangleup b_i c_1 c_2, \bigtriangleup b_i c_2 c_3, \ldots, \bigtriangleup b_i c_{t-1} c_t$ are on the top of $\partial X(s,t)$,
        \item each edge $b_ic_1, \ldots, b_ic_t$ belongs to exactly one ridge outside $\{\bigtriangleup b_i c_1 c_2, \bigtriangleup b_i c_2 c_3, \ldots, \bigtriangleup b_i c_{t-1} c_t\}$, which is a triangle whose additional vertex appears counterclockwise from $b_i$, and
        \item if we pick two nonconsecutive vertices from the list $(c_1, \ldots, c_t)$, these two vertices will never belong to a common facet.
    \end{itemize}
    We call the list of vertices $(c_1, \ldots, c_t)$ a \emph{fillet} of $X(s,t)$.\label{property-fillet}

    \item The hub $a_1$ does not belong to a common facet with any vertex outside $\{a_1,\ldots,a_{K'(s,t)}\}$.\label{property-hub}
\end{enumerate}

\begin{observation}
    $X(s,t)$ has $t$ many vertices in each fillet, $K(s,t)$ many fillets, and $K'(s,t)$ many roots including the hub.
\end{observation}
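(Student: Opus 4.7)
The plan is to observe that each of the three claims in this observation follows by directly reading the relevant property from the list (a)--(f) in the construction specification of $X(s,t)$. No induction on $(s,t)$ is needed at this point, since the observation is about the combinatorial data that every $X(s,t)$ is stipulated to carry.

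First, I would handle the number of vertices per fillet. Property (f) defines a fillet as the list $(c_1,\ldots,c_t)$ appearing in the prescribed counterclockwise pattern $b_{i-1},c_1,\ldots,c_{t-1},c_t=a_i,b_i$ on the profile. The length of this list is literally $t$, so each fillet has exactly $t$ vertices by definition.

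Second, the count of fillets: property (f) begins with ``For exactly $K(s,t)$ many indices $2<i<K'(s,t)$'' where the fillet configuration occurs, and each such index contributes one fillet (the vertex $c_t=a_i$ is determined by $i$, and the remaining vertices $c_1,\ldots,c_{t-1}$ between the consecutive tips $b_{i-1}$ and $b_i$ are determined by the consecutive-on-profile condition). Hence the number of fillets equals $K(s,t)$.

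Third, the count of roots: property (c) names the roots as $a_1,\ldots,a_{K'(s,t)}$ and designates $a_1$ as the hub. The total number of roots, including the hub, is therefore $K'(s,t)$ by definition.

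Since the observation is a direct bookkeeping consequence of properties (c) and (f), the only conceivable obstacle would be verifying that these properties can be simultaneously satisfied by some complex $X(s,t)$ at all. That existence claim, however, is the content of the recursive construction given in the following subsection, and is not the business of this observation.
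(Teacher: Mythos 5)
Your reasoning is correct and essentially matches the paper's implicit justification: the observation is a direct reading of the stipulated properties, and you rightly note that the question of whether such a complex exists is deferred to the recursive construction that follows. The one slip is a labeling error: you refer to the fillet property as (f), but in the paper's list (a)--(f) the fillet property is (e); property (f) is the separate hub-nonadjacency condition, which plays no role in this count.
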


\begin{construction}
\label{ball-construction}
For all positive integers $s$ and $t$, we define $X(s,t)$ recursively as follows.
\begin{itemize}
    \item $X(s,1)$ is a triangulated disc consisting of $2^s+1$ many root triangles and $2^s+1$ many teeth (Figure \ref{fig-xs1}). We regard this as a ``degenerate" 3-complex with no facets, so $\partial X(s,1)=X(s,1)$, and the ``top" and ``bottom" of $X(s,1)$ both equal $X(s,1)$ in its entirety. We let each root contained in two teeth be a fillet. (The second, third, and fifth statements in \ref{property-fillet} are here vacuously true.)
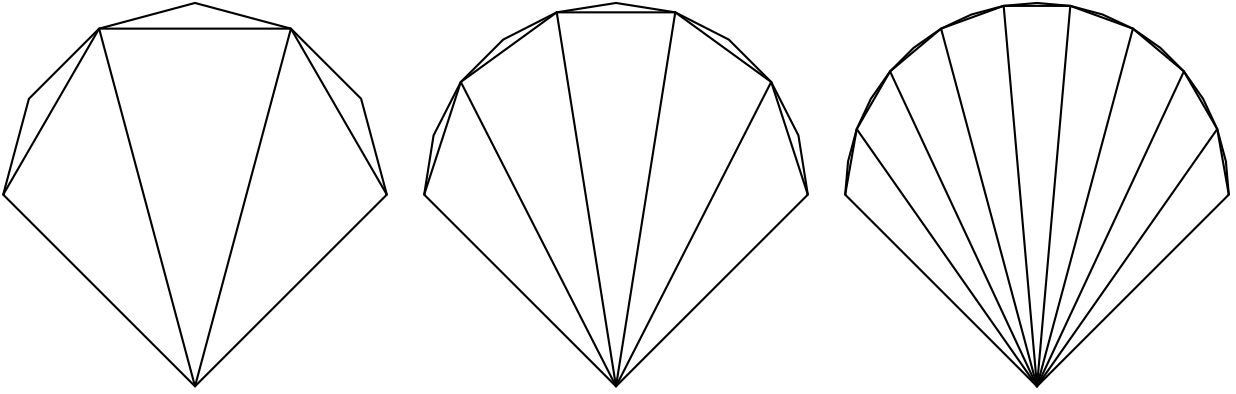
\begin{figure}[t]
    \centering
    \tikzmath{\r = 2.55;}
    \begin{tikzpicture}[line cap=round, line join=round, thick]
        \coordinate (o) at (-90:\r);
        \coordinate (a1) at (0:\r);
        \coordinate (a2) at (60:\r);
        \coordinate (a3) at (120:\r);
        \coordinate (a4) at (180:\r);
        \coordinate (b1) at (30:\r);
        \coordinate (b2) at (90:\r);
        \coordinate (b3) at (150:\r);

        \draw (o) -- (a1) -- (a2) -- (a3) -- (a4) -- cycle;
        \draw (a1) -- (b1) -- (a2) -- (b2) -- (a3) -- (b3) -- (a4);
        \draw (o) -- (a2);
        \draw (o) -- (a3);
    \end{tikzpicture}
    \quad
    \begin{tikzpicture}[line cap=round, line join=round, thick]
        \coordinate (o) at (-90:\r);
        \coordinate (a1) at (0:\r);
        \coordinate (a2) at (36:\r);
        \coordinate (a3) at (72:\r);
        \coordinate (a4) at (108:\r);
        \coordinate (a5) at (144:\r);
        \coordinate (a6) at (180:\r);
        \coordinate (b1) at (18:\r);
        \coordinate (b2) at (54:\r);
        \coordinate (b3) at (90:\r);
        \coordinate (b4) at (126:\r);
        \coordinate (b5) at (162:\r);

        \draw (o) -- (a1) -- (a2) -- (a3) -- (a4) -- (a5) -- (a6) -- cycle;
        \draw (a1) -- (b1) -- (a2) -- (b2) -- (a3) -- (b3) -- (a4) -- (b4) -- (a5) -- (b5) -- (a6);
        \draw (o) -- (a2);
        \draw (o) -- (a3);
        \draw (o) -- (a4);
        \draw (o) -- (a5);
    \end{tikzpicture}
    \quad
    \begin{tikzpicture}[line cap=round, line join=round, thick]
        \coordinate (o) at (-90:\r);
        \coordinate (a1) at (0:\r);
        \coordinate (a2) at (20:\r);
        \coordinate (a3) at (40:\r);
        \coordinate (a4) at (60:\r);
        \coordinate (a5) at (80:\r);
        \coordinate (a6) at (100:\r);
        \coordinate (a7) at (120:\r);
        \coordinate (a8) at (140:\r);
        \coordinate (a9) at (160:\r);
        \coordinate (a10) at (180:\r);
        \coordinate (b1) at (10:\r);
        \coordinate (b2) at (30:\r);
        \coordinate (b3) at (50:\r);
        \coordinate (b4) at (70:\r);
        \coordinate (b5) at (90:\r);
        \coordinate (b6) at (110:\r);
        \coordinate (b7) at (130:\r);
        \coordinate (b8) at (150:\r);
        \coordinate (b9) at (170:\r);

        \draw (o) -- (a1) -- (a2) -- (a3) -- (a4) -- (a5) -- (a6) -- (a7) -- (a8) -- (a9) -- (a10) -- cycle;
        \draw (a1) -- (b1) -- (a2) -- (b2) -- (a3) -- (b3) -- (a4) -- (b4) -- (a5) -- (b5) -- (a6) -- (b6) -- (a7) -- (b7) -- (a8) -- (b8) -- (a9) -- (b9) -- (a10);
        \draw (o) -- (a2);
        \draw (o) -- (a3);
        \draw (o) -- (a4);
        \draw (o) -- (a5);
        \draw (o) -- (a6);
        \draw (o) -- (a7);
        \draw (o) -- (a8);
        \draw (o) -- (a9);
    \end{tikzpicture}
    \caption{From left to right, CW complexes $X(s,1)$ for $s=1,2,3$.}
    \label{fig-xs1}
\end{figure}

    \item For $t>1$, $X(1,t)$ is a 3-ball with hub $a_1$, roots $a_1,\ldots,a_5$ in counterclockwise order, teeth $\bigtriangleup a_2b_2a_3,\allowbreak\bigtriangleup a_3b_3a_4,\allowbreak\bigtriangleup a_4b_4a_5$, and fillets $(c_1,\ldots,c_{t-1},a_3),(c'_1,\ldots,c'_{t-1},a_4)$ (Figure \ref{fig-x1t}). It has the following facets:
    \begin{itemize}
        \item a facet on vertices $a_1a_2a_3a_4a_5$ (gray), isomorphic to a connected sum of simplices $a_1a_2a_3a_5$ and $a_1a_3a_4a_5$,
        \item a facet on vertices $a_2b_2c_{t-1}a_3a_5$ (blue), isomorphic to a connected sum of simplices $a_2b_2a_3a_5$ and $b_2c_{t-1}a_3a_5$,
        \item a facet on vertices $c_{t-1}a_3b_3c'_{t-1}a_4a_5$ (pink), isomorphic to a connected sum of simplices $c_{t-1}a_3b_3a_5$, $a_3b_3a_4a_5$, and $b_3c'_{t-1}a_4a_5$,
        \item a facet isomorphic to a simplex on vertices $c'_{t-1}a_4b_4a_5$ (yellow), and
        \item for each $i=1,\ldots,t-2$, four facets isomorphic to simplices on vertices $b_2c_ic_{i+1}a_5$, $c_ic_{i+1}b_3a_5$, $b_3c'_ic'_{i+1}a_5$, and $c'_ic'_{i+1}b_4a_5$ (not colored).
    \end{itemize}
\begin{figure}[t]
    \centering
    \tikzmath{\r = 2.4;\opac = 0.75;}
    
    \colorlet{root}{gray}
    \colorlet{tooth1}{cyan}
    \colorlet{tooth2}{magenta}
    \colorlet{tooth3}{yellow}
    
    \begin{tikzpicture}[line cap=round, line join=round, thick]
        \coordinate (o) at (-90:\r);
        \coordinate (x1) at (0:\r);
        \coordinate (x2) at (15:\r);
        \coordinate (x3) at (45:\r);
        \coordinate (x4) at (60:\r);
        \coordinate (x5) at (75:\r);
        \coordinate (x6) at (105:\r);
        \coordinate (x7) at (120:\r);
        \coordinate (x8) at (135:\r);
        \coordinate (x9) at (180:\r);

        \draw[loosely dashed,very thick] (o) -- (x4);
        \draw[loosely dashed,very thick] (o) -- (x7);
        \path[fill=root, opacity=\opac] (o) -- (x1) -- (x4) -- (x7) -- (x9) -- cycle;
        \draw[very thick] (x9) -- (o) -- (x1);
        \draw[dashed,very thick] (x1) -- (x4) -- (x7);

        \draw[densely dashed] (x2) -- (x4);
        \path[fill=tooth1, opacity=\opac] (x1) -- (x2) -- (x3) -- (x4) -- (x9) -- cycle;
        \draw[very thick] (x1) -- (x9);
        \draw (x1) -- (x2) -- (x9);
        \draw[dashed,very thick] (x4) -- (x9);

        \draw[densely dashed] (x5) -- (x7);
        \path[fill=tooth2, opacity=\opac] (x3) -- (x4) -- (x5) -- (x6) -- (x7) -- (x9) -- cycle;
        \draw (x3) -- (x4) -- (x5) -- cycle;
        \draw (x5) -- (x9);
        \draw[dashed,very thick] (x7) -- (x9);

        \path[fill=tooth3, opacity=\opac] (x6) -- (x7) -- (x8) -- (x9) -- cycle;
        \draw (x6) -- (x7) -- (x8) -- (x9);
        \draw (x6) -- (x8);

        \draw (x2) -- (x3) -- (x9);
        \draw (x5) -- (x6) -- (x9);

        \node[below] at (o) {$a_1$};
        \node[right=-2pt] at (x1) {$\mathrlap{a_2}$};
        \node[right=-1pt] at (x2) {$\mathrlap{b_2}$};
        \node[right] at (x3) {$c_1$};
        \node[above=4pt,right=-2pt] at (x4) {$a_3$};
        \node[above] at (x5) {$b_3$};
        \node[above] at (x6) {$c'_1$};
        \node[above=4pt,left=-2pt] at (x7) {$a_4$};
        \node[left] at (x8) {$b_4$};
        \node[left=-2pt] at (x9) {$a_5$};
    \end{tikzpicture}
    \quad
    \begin{tikzpicture}[line cap=round, line join=round, thick]
        \coordinate (o) at (-90:\r);
        \coordinate (x1) at (0:\r);
        \coordinate (x2) at (15:\r);
        \coordinate (x3) at (45:\r);
        \coordinate (x4) at (60:\r);
        \coordinate (x5) at (75:\r);
        \coordinate (x6) at (105:\r);
        \coordinate (x7) at (120:\r);
        \coordinate (x8) at (135:\r);
        \coordinate (x9) at (180:\r);

        \draw[loosely dashed,very thick] (o) -- (x4);
        \draw[loosely dashed,very thick] (o) -- (x7);
        \path[fill=root, opacity=\opac] (o) -- (x1) -- (x4) -- (x7) -- (x9) -- cycle;
        \draw[very thick] (x9) -- (o) -- (x1);
        \draw[dashed,very thick] (x1) -- (x4) -- (x7);

        \draw[densely dashed] (x2) -- (x4);
        \path[fill=tooth1, opacity=\opac] (x1) -- (x2) -- (x3) -- (x4) -- (x9) -- cycle;
        \draw[very thick] (x1) -- (x9);
        \draw (x1) -- (x2) -- (x9);
        \draw[dashed,very thick] (x4) -- (x9);

        \draw[densely dashed] (x5) -- (x7);
        \path[fill=tooth2, opacity=\opac] (x3) -- (x4) -- (x5) -- (x6) -- (x7) -- (x9) -- cycle;
        \draw (x3) -- (x4) -- (x5) -- cycle;
        \draw (x5) -- (x9);
        \draw[dashed,very thick] (x7) -- (x9);

        \path[fill=tooth3, opacity=\opac] (x6) -- (x7) -- (x8) -- (x9) -- cycle;
        \draw (x6) -- (x7) -- (x8) -- (x9);
        \draw (x6) -- (x8);

        \draw[densely dashed] (x2) -- (x3) -- (x9);
        \draw[densely dashed] (x5) -- (x6) -- (x9);

        \coordinate (c1) at (35:\r);
        \coordinate (d1) at (95:\r);

        \draw (x2) -- (c1) -- (x9);
        \draw (x3) -- (c1) -- (x5);

        \draw (x5) -- (d1) -- (x9);
        \draw (x6) -- (d1) -- (x8);

        \node[below] at (o) {$\phantom{a_1}$};
        \node[right] at (x3) {$c_2$};
        \node[right] at (c1) {$c_1$};
        \node[above] at (x6) {$c'_2$};
        \node[above] at (d1) {$c'_1$};
    \end{tikzpicture}
    \quad
    \begin{tikzpicture}[line cap=round, line join=round, thick]
        \coordinate (o) at (-90:\r);
        \coordinate (x1) at (0:\r);
        \coordinate (x2) at (15:\r);
        \coordinate (x3) at (45:\r);
        \coordinate (x4) at (60:\r);
        \coordinate (x5) at (75:\r);
        \coordinate (x6) at (105:\r);
        \coordinate (x7) at (120:\r);
        \coordinate (x8) at (135:\r);
        \coordinate (x9) at (180:\r);

        \draw[loosely dashed,very thick] (o) -- (x4);
        \draw[loosely dashed,very thick] (o) -- (x7);
        \path[fill=root, opacity=\opac] (o) -- (x1) -- (x4) -- (x7) -- (x9) -- cycle;
        \draw[very thick] (x9) -- (o) -- (x1);
        \draw[dashed,very thick] (x1) -- (x4) -- (x7);

        \draw[densely dashed] (x2) -- (x4);
        \path[fill=tooth1, opacity=\opac] (x1) -- (x2) -- (x3) -- (x4) -- (x9) -- cycle;
        \draw[very thick] (x1) -- (x9);
        \draw (x1) -- (x2) -- (x9);
        \draw[dashed,very thick] (x4) -- (x9);

        \draw[densely dashed] (x5) -- (x7);
        \path[fill=tooth2, opacity=\opac] (x3) -- (x4) -- (x5) -- (x6) -- (x7) -- (x9) -- cycle;
        \draw (x3) -- (x4) -- (x5) -- cycle;
        \draw (x5) -- (x9);
        \draw[dashed,very thick] (x7) -- (x9);

        \path[fill=tooth3, opacity=\opac] (x6) -- (x7) -- (x8) -- (x9) -- cycle;
        \draw (x6) -- (x7) -- (x8) -- (x9);
        \draw (x6) -- (x8);

        \draw[densely dashed] (x2) -- (x3) -- (x9);
        \draw[densely dashed] (x5) -- (x6) -- (x9);

        \coordinate (c1) at (35:\r);
        \coordinate (c0) at (25:\r);
        \coordinate (d1) at (95:\r);
        \coordinate (d0) at (85:\r);

        \draw[densely dashed] (x2) -- (c1) -- (x9);
        \draw (x3) -- (c1) -- (x5);

        \draw (x2) -- (c0) -- (x9);
        \draw (c1) -- (c0) -- (x5);

        \draw[densely dashed] (x5) -- (d1) -- (x9);
        \draw (x6) -- (d1) -- (x8);

        \draw (x5) -- (d0) -- (x9);
        \draw (d1) -- (d0) -- (x8);

        \node[below] at (o) {$\phantom{a_1}$};
        \node[right] at (x3) {$c_3$};
        \node[right] at (c1) {$c_2$};
        \node[right] at (c0) {$c_1$};
        \node[above] at (x6) {$c'_3$};
        \node[above] at (d1) {$c'_2$};
        \node[above] at (d0) {$c'_1$};
    \end{tikzpicture}
    \caption{From left to right, CW complexes $X(1,t)$ for $t=2,3,4$.}
    \label{fig-x1t}
\end{figure}
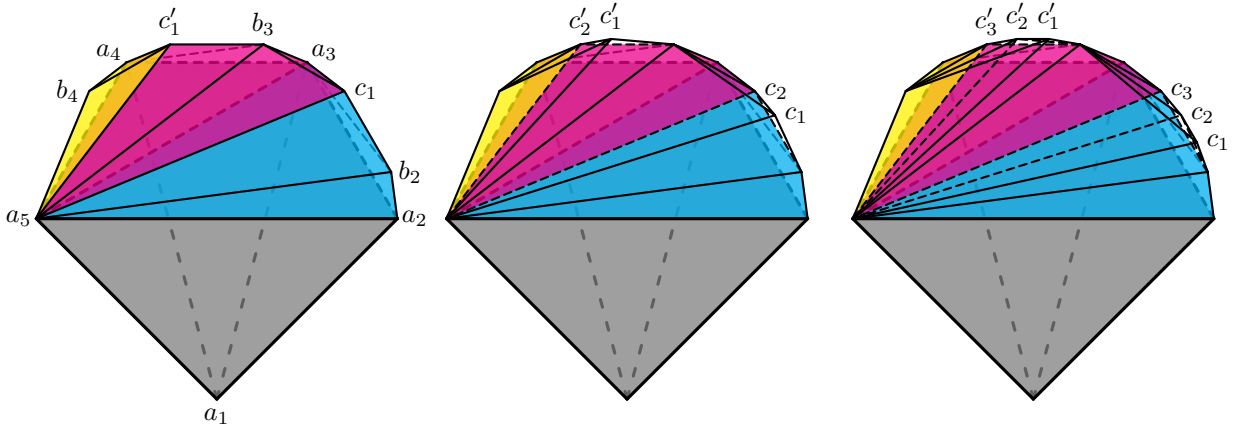

    \item For $s,t>1$, $X(s,t)$ is constructed from $X(s-1,K'(s,t-1))$ and $K(s-1,K'(s,t-1))$ many copies of $X(s,t-1)$. Let $o$ be the hub of $X(s-1,K'(s,t-1))$. First, we reflect $X(s-1,K'(s,t-1))$ horizontally. Next, for each fillet of $X(s-1,K'(s,t-1))$, we do the following (see Figure \ref{fig-xst-recursion}).
    \begin{enumerate}
        \item Label the vertices in the fillet $c_1,\ldots,c_{K'(s,t-1)}$ going \emph{clockwise} (as we have reflected $X(s-1,K'(s,t-1))$, this is consistent with our notation in \ref{property-fillet}). Let $b$ be the tip clockwise of our fillet, $b'$ the tip counterclockwise of our fillet, and $a$ the root counterclockwise of $b'$, so $\bigtriangleup c_{K'(s,t-1)} b' a$ is a tooth and $\bigtriangleup o c_{K'(s,t-1)} a$ a root triangle. By \ref{property-fillet}, triangles $\bigtriangleup b c_2 c_1, \allowbreak \bigtriangleup b c_3 c_2, \allowbreak \ldots, \allowbreak \bigtriangleup b c_{K'(s,t-1)} c_{K'(s,t-1)-1}$ are on the top of $\partial X(s-1,K'(s,t-1))$, while triangles $\bigtriangleup b' c_2 c_1, \allowbreak \bigtriangleup b' c_3 c_2, \allowbreak \ldots, \allowbreak \bigtriangleup b' c_{K'(s,t-1)} c_{K'(s,t-1)-1}$ are on the bottom of $\partial X(s-1,K'(s,t-1))$.

        For $i=1, \ldots, K'(s,t-1)-1$, let $Y_i$ be the facet containing $\bigtriangleup b c_{i+1} c_i$, and let $Z_i$ be the unique ridge such that $bc_i \subset Z_i \subset Y_i$ and $Z_i \neq \bigtriangleup b c_{i+1} c_i$. By \ref{property-fillet}, each $Z_i$ is a triangle lying to the left of $\overrightarrow{bc_i}$, and $Y_{i-1} \cap Y_i=Z_i$ for $i>1$.\label{step-1}

        \item On the bottom, attach a new facet isomorphic to a simplex on vertices $o,a,b',c_{K'(s,t-1)}$, introducing ridges $\bigtriangleup o b' a$ and $\bigtriangleup o c_{K'(s,t-1)} b'$. By \ref{property-hub}, vertices $o$ and $b'$ were not previously on a common facet, so we maintain strong regularity.\label{step-2}

        \item On the bottom, attach a new facet on vertices $o,b',c_1,\ldots, c_{K'(s,t-1)}$, isomorphic to a connected sum of simplices $oc_2c_1b',\allowbreak oc_3c_2b',\allowbreak\ldots,\allowbreak oc_{K'(s,t-1)}c_{K'(s,t-1)-1}b'$, introducing ridges $\bigtriangleup oc_1b',\allowbreak\bigtriangleup oc_2c_1,\allowbreak\bigtriangleup oc_3c_2,\allowbreak\ldots,\allowbreak\bigtriangleup oc_{K'(s,t-1)}c_{K'(s,t-1)-1}$. By \ref{property-fillet}-\ref{property-hub}, no two nonconsecutive vertices in the list $(c_1,\allowbreak\ldots,\allowbreak c_{K'(s,t-1)},\allowbreak o)$ were originally on a common facet, so we maintain strong regularity. Vertices $b',\allowbreak c_1,\allowbreak \ldots,\allowbreak c_{K'(s,t-1)-1}$ will become roots of $X(s,t)$. Ridges $\bigtriangleup o b' a, \allowbreak \bigtriangleup o c_1 b', \bigtriangleup o c_2 c_1, \allowbreak \bigtriangleup o c_3 c_2, \allowbreak \ldots, \allowbreak \bigtriangleup o c_{K'(s,t-1)} c_{K'(s,t-1)-1}$ will replace $\bigtriangleup o c_{K'(s,t-1)} a$ as root triangles.\label{step-3}

        \item Draw new vertices $\alpha_1, \ldots, \alpha_{K'(s,t-1)-1}$ and $\beta_1, \ldots, \beta_{K'(s,t-1)-2}$ so that projected onto the page, the following vertices lie in clockwise order on the unit circle:
        \[
            c_1,\ \alpha_1,\ \beta_1,\ \ldots,\ c_{K'(s,t-1)-2},\ \alpha_{K'(s,t-1)-2},\ \beta_{K'(s,t-1)-2},\ c_{K'(s,t-1)-1},\ \alpha_{K'(s,t-1)-1},\ c_{K'(s,t-1)}.
        \]
        For each $i=1,\ldots,K'(s,t-1)-1$, extend ridge $Z_i$ to include $\alpha_i$, replacing edge $bc_i$ with edges $b\alpha_i,\alpha_ic_i$. For $i < K'(s,t-1)-1$, extend facet $Y_i$ to include $\alpha_i$, $\beta_i$, and $\alpha_{i+1}$, keeping the extended $Z_i$ and $Z_{i+1}$ as faces, and replacing $\bigtriangleup b c_{i+1} c_i$ with triangles $\bigtriangleup b \alpha_{i+1} \alpha_i, \allowbreak \bigtriangleup \alpha_{i+1} \beta_i \alpha_i, \allowbreak \bigtriangleup \alpha_{i+1} c_{i+1} \beta_i$ on the top and $\bigtriangleup c_{i+1} \beta_i c_i, \allowbreak \bigtriangleup \beta_i \alpha_i c_i$ on the bottom of $\partial Y_i$. For $i=K'(s,t-1)-1$, extend facet $Y_i$ to include $\alpha_i$, keeping the extended $Z_i$ as a face, and replacing $\bigtriangleup b c_{i+1} c_i$ with triangles $\bigtriangleup b c_{i+1} \alpha_i$ on the top and $\bigtriangleup c_{i+1} \alpha_i c_i$ on the bottom of $\partial Y_i$. Triangles
        \[
            \bigtriangleup c_2 \beta_1 c_1,\ \ldots,\ \bigtriangleup c_{K'(s,t-1)-1} \beta_{K'(s,t-1)-2} c_{K'(s,t-1)-2},\ \bigtriangleup c_{K'(s,t-1)} \alpha_{K'(s,t-1)-1} c_{K'(s,t-1)-1}
        \]
        will replace $\triangle c_{K'(s,t-1)} b' a$ as teeth of $X(s,t)$.\label{step-4}

        \item Attach a copy of $X(s,t-1)$ to our complex by identifying the hub of $X(s,t-1)$ with vertex $b$, the non-hub roots of $X(s,t-1)$ with vertices $\alpha_1, \ldots, \alpha_{K'(s,t-1)-1}$, and the teeth of $X(s,t-1)$ with a subset of the triangles
        \[
            \bigtriangleup \alpha_2 \beta_1 \alpha_1,\ \ldots,\ \bigtriangleup \alpha_{K'(s,t-1)-1} \beta_{K'(s,t-1)-2} \alpha_{K'(s,t-1)-2}.
        \]
        By construction, no two nonconsecutive vertices in the list $(\alpha_1, \ldots, \alpha_{K'(s,t-1)-1})$ were previously on a common facet, so our hub and root identifications preserve strong regularity. By \ref{property-hub}, no tip of $X(s,t-1)$ was previously on a common facet with the hub of $X(s,t-1)$, so our tooth identifications preserve strong regularity as well.

        In each fillet of $X(s,t-1)$, we have identified the last vertex with $\alpha_i$ for some $1<i<K'(s,t-1)-1$. Append to each fillet of $X(s,t-1)$ the corresponding vertex $c_i$ to obtain a fillet of $X(s,t)$.\label{step-5}
    \end{enumerate}
    We repeat these steps $K(s-1,K'(s,t-1))$ many times, once for each fillet of $X(s-1,K'(s,t-1))$. The original complex $X(s-1,K'(s,t-1))$ contributes $K'(s-1,K'(s,t-1))$ many roots, and each repetition adds $K(s,t-1)$ many fillets and $K'(s,t-1)$ many roots. The resulting complex therefore has $K(s,t-1)K(s-1,K'(s,t-1))=K(s,t)$ many fillets and $K'(s,t-1)K(s-1,K'(s,t-1))+K'(s-1,K'(s,t-1))=K'(s,t)$ many roots, as desired. Each fillet contains $t-1$ many vertices from $X(s,t-1)$ and one new vertex, for the desired total of $t$.

    Finally, consider any tooth $\bigtriangleup a b a'$ of $X(s-1,K'(s,t-1))$ remaining on the boundary of our complex, where $a,b,a'$ appear counterclockwise along the profile. If $a,b,a'$ are consecutive, we simply let $\bigtriangleup aba'$ be a tooth of $X(s,t)$. Otherwise, by \ref{property-tooth}, the bottom of our complex contains another triangle $\bigtriangleup a d b$ belonging to the same facet, vertices $a,d$ are consecutive, vertices $b,a'$ are consecutive, and vertices $a',d$ do not belong to a common ridge. In this case, we delete edge $ab$ and the two triangles, draw a new edge $a'd$ and triangles $\bigtriangleup a d a', \bigtriangleup dba'$, and let $\bigtriangleup a d a'$ be a tooth of $X(s,t)$. We repeat this for every tooth of $X(s-1,K'(s,t-1))$ remaining on the boundary of our complex.

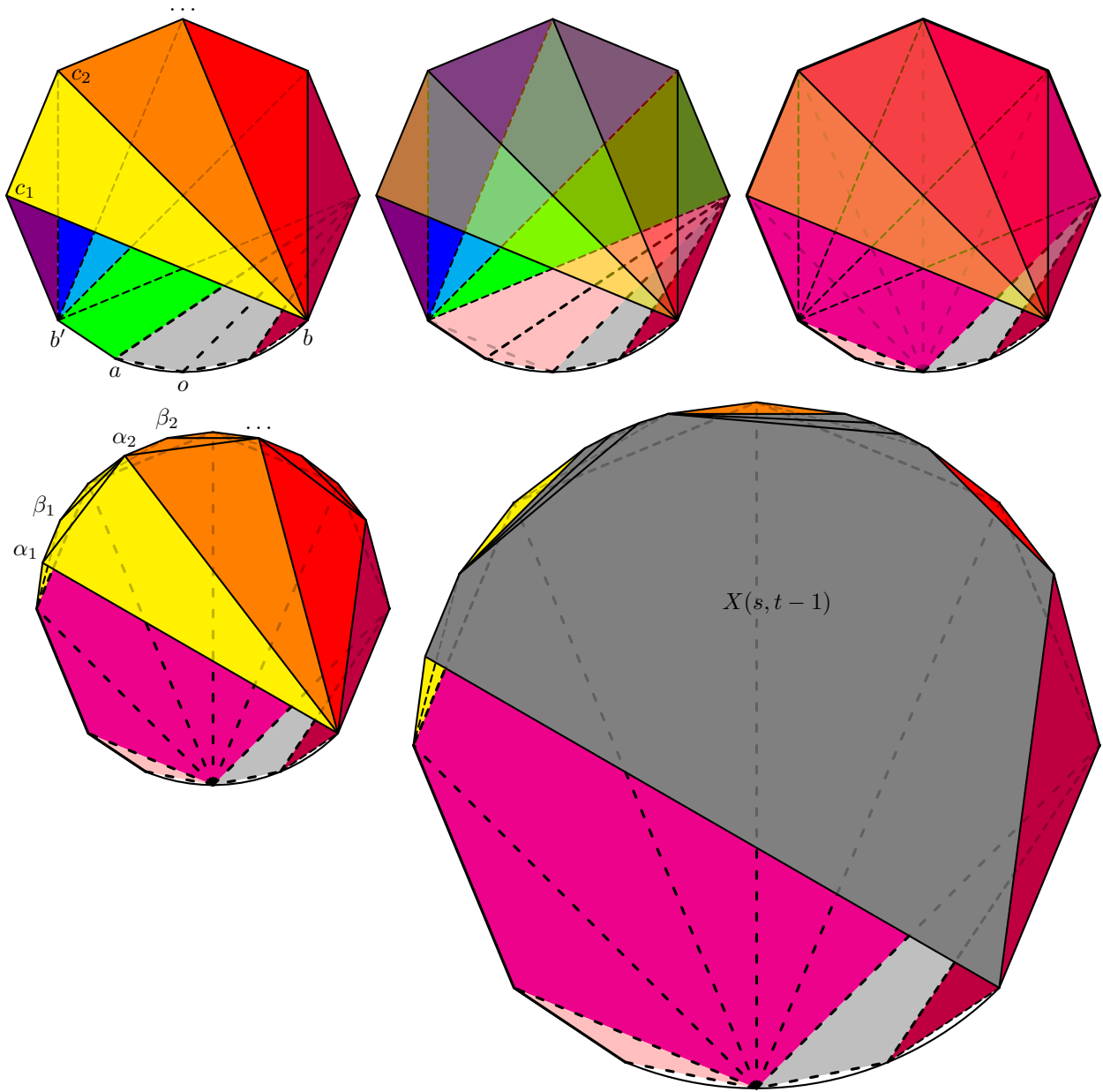
\begin{figure}[p]
    \centering
    \tikzmath{\r = 2.66;\transparent = 0.5;\translucent = 0.75;}
    \colorlet{root}{lightgray}
    \colorlet{back1}{green}
    \colorlet{back2}{cyan}
    \colorlet{back3}{blue}
    \colorlet{back4}{violet}
    \colorlet{front1}{purple}
    \colorlet{front2}{red}
    \colorlet{front3}{orange}
    \colorlet{front4}{yellow}
    \colorlet{facet1}{pink}
    \colorlet{facet2}{magenta}
    \colorlet{complex}{gray}
    \begin{tikzpicture}[line cap=round,line join=round, thick,baseline=(current bounding box.north)]
        \coordinate (o) at (-90:\r);
        \coordinate (a) at (-67.5:\r);
        \coordinate (b) at (-45:\r);
        \coordinate (c5) at (0:\r);
        \coordinate (c4) at (45:\r);
        \coordinate (c3) at (90:\r);
        \coordinate (c2) at (135:\r);
        \coordinate (c1) at (180:\r);
        \coordinate (b') at (225:\r);
        \coordinate (a') at (247.5:\r);

        \path[fill=root] (o) -- (a) -- (c5) -- (a') -- cycle;
        \path[fill=back1] (a') -- (c5) -- (c4) -- (b') -- cycle;
        \path[fill=back2] (b') -- (c4) -- (c3) -- cycle;
        \path[fill=back3] (b') -- (c3) -- (c2) -- cycle;
        \path[fill=back4] (b') -- (c2) -- (c1) -- cycle;
        \path[fill=front1] (a) -- (b) -- (c5) -- cycle;

        \draw[loosely dashed,very thick] (a') -- (o) -- (a);
        \draw[loosely dashed,very thick] (o) -- (c5);
        \draw[dashed, very thick] (a) -- (c5) -- (a');
        \draw[densely dashed] (a) -- (b);
        \draw[densely dashed] (b') -- (c5);
        \draw[densely dashed] (b') -- (c4);
        \draw[densely dashed] (b') -- (c3);
        \draw[densely dashed] (b') -- (c2);
        \draw (a') arc (-112.5:-45:\r);

        \path[fill=front1] (b) -- (c5) -- (c4) -- cycle;
        \path[fill=front2] (b) -- (c4) -- (c3) -- cycle;
        \path[fill=front3] (b) -- (c3) -- (c2) -- cycle;
        \path[fill=front4] (b) -- (c2) -- (c1) -- cycle;

        \draw (b) -- (c5) -- (c4) -- (c3) -- (c2) -- (c1) -- (b') -- (a');
        \draw (b) -- (c4);
        \draw (b) -- (c3);
        \draw (b) -- (c2);
        \draw (b) -- (c1);

        \begin{scope}
            \clip (b) -- (c5) -- (c4) -- (c3) -- (c2) -- (c1) -- cycle;

            \draw[densely dashed,opacity=0.25] (b') -- (c5);
            \draw[densely dashed,opacity=0.25] (b') -- (c4);
            \draw[densely dashed,opacity=0.25] (b') -- (c3);
            \draw[densely dashed,opacity=0.25] (b') -- (c2);
            \draw[loosely dashed,very thick,opacity=0.25] (o) -- (c5);
            \draw[dashed, very thick,opacity=0.25] (a) -- (c5) -- (a');
        \end{scope}

        \node[below] at (o) {$o$};
        \node[below] at (a') {$a$};
        \node[below] at (b') {$b'$};
        \node[above=2pt,right] at (c1) {$c_1$};
        \node[below=2.5pt,right=2pt] at (c2) {$c_2$};
        \node[above] at (c3) {$\ldots$};

        \node[below] at (b) {$b$};
    \end{tikzpicture}
    \
    \begin{tikzpicture}[line cap=round,line join=round, thick,baseline=(current bounding box.north)]
        \coordinate (o) at (-90:\r);
        \coordinate (a) at (-67.5:\r);
        \coordinate (b) at (-45:\r);
        \coordinate (c5) at (0:\r);
        \coordinate (c4) at (45:\r);
        \coordinate (c3) at (90:\r);
        \coordinate (c2) at (135:\r);
        \coordinate (c1) at (180:\r);
        \coordinate (b') at (225:\r);
        \coordinate (a') at (247.5:\r);

        \path[fill=root] (o) -- (a) -- (c5) -- cycle;
        \path[fill=back1] (c5) -- (c4) -- (b') -- cycle;
        \path[fill=back2] (b') -- (c4) -- (c3) -- cycle;
        \path[fill=back3] (b') -- (c3) -- (c2) -- cycle;
        \path[fill=back4] (b') -- (c2) -- (c1) -- cycle;
        \path[fill=front1] (a) -- (b) -- (c5) -- cycle;

        \path[fill=facet1] (o) -- (c5) -- (b') -- (a') -- cycle;
        \draw[loosely dashed,very thick,opacity=0.25] (o) -- (b');

        \draw[loosely dashed,very thick] (a') -- (o) -- (a);
        \draw[loosely dashed,very thick] (o) -- (c5);
        \draw[dashed,very thick] (a) -- (c5) -- (a');
        \draw[densely dashed] (a) -- (b);
        \draw[densely dashed] (b') -- (c5);
        \draw[densely dashed] (b') -- (c4);
        \draw[densely dashed] (b') -- (c3);
        \draw[densely dashed] (b') -- (c2);
        \draw (a') arc (-112.5:-45:\r);

        \path[fill=front1,opacity=\transparent] (b) -- (c5) -- (c4) -- cycle;
        \path[fill=front2,opacity=\transparent] (b) -- (c4) -- (c3) -- cycle;
        \path[fill=front3,opacity=\transparent] (b) -- (c3) -- (c2) -- cycle;
        \path[fill=front4,opacity=\transparent] (b) -- (c2) -- (c1) -- cycle;

        \draw (b) -- (c5) -- (c4) -- (c3) -- (c2) -- (c1) -- (b');
        \draw[very thick] (b') -- (a');
        \draw (b) -- (c4);
        \draw (b) -- (c3);
        \draw (b) -- (c2);
        \draw (b) -- (c1);

        \node[above] at (c3) {$\phantom\ldots$};
    \end{tikzpicture}
    \
    \begin{tikzpicture}[line cap=round,line join=round, thick,baseline=(current bounding box.north)]
        \coordinate (o) at (-90:\r);
        \coordinate (a) at (-67.5:\r);
        \coordinate (b) at (-45:\r);
        \coordinate (c5) at (0:\r);
        \coordinate (c4) at (45:\r);
        \coordinate (c3) at (90:\r);
        \coordinate (c2) at (135:\r);
        \coordinate (c1) at (180:\r);
        \coordinate (b') at (225:\r);
        \coordinate (a') at (247.5:\r);

        \path[fill=root] (o) -- (a) -- (c5) -- cycle;
        \path[fill=front1] (a) -- (b) -- (c5) -- cycle;

        \path[fill=facet2] (o) -- (c5) -- (c4) -- (c3) -- (c2) -- (c1) -- (b') -- cycle;
        \draw[loosely dashed,very thick,opacity=0.25] (o) -- (c4);
        \draw[loosely dashed,very thick,opacity=0.25] (o) -- (c3);
        \draw[loosely dashed,very thick,opacity=0.25] (o) -- (c2);
        \draw[loosely dashed,very thick,opacity=0.25] (o) -- (c1);

        \path[fill=facet1] (o) -- (b') -- (a') -- cycle;
        \draw[loosely dashed,very thick] (o) -- (b');

        \draw[loosely dashed,very thick] (a') -- (o) -- (a);
        \draw[loosely dashed,very thick] (o) -- (c5);
        \draw[dashed,very thick] (a) -- (c5);
        \draw[densely dashed] (a) -- (b);
        \draw[densely dashed] (b') -- (c5);
        \draw[densely dashed] (b') -- (c4);
        \draw[densely dashed] (b') -- (c3);
        \draw[densely dashed] (b') -- (c2);
        \draw (a') arc (-112.5:-45:\r);

        \path[fill=front1,opacity=\transparent] (b) -- (c5) -- (c4) -- cycle;
        \path[fill=front2,opacity=\transparent] (b) -- (c4) -- (c3) -- cycle;
        \path[fill=front3,opacity=\transparent] (b) -- (c3) -- (c2) -- cycle;
        \path[fill=front4,opacity=\transparent] (b) -- (c2) -- (c1) -- cycle;

        \draw (b) -- (c5);
        \draw[very thick] (c5) -- (c4) -- (c3) -- (c2) -- (c1) -- (b') -- (a');
        \draw (b) -- (c4);
        \draw (b) -- (c3);
        \draw (b) -- (c2);
        \draw (b) -- (c1);

        \node[above] at (c3) {$\phantom\ldots$};
    \end{tikzpicture}

    \begin{tikzpicture}[line cap=round, line join=round, thick,baseline=(current bounding box.north)]
        \coordinate (o) at (-90:\r);
        \coordinate (a) at (-67.5:\r);
        \coordinate (b) at (-45:\r);
        \coordinate (c5) at (0:\r);
        \coordinate (c4) at (45:\r);
        \coordinate (c3) at (90:\r);
        \coordinate (c2) at (135:\r);
        \coordinate (c1) at (180:\r);
        \coordinate (b') at (225:\r);
        \coordinate (a') at (247.5:\r);

        \coordinate (d4) at (30:\r);
        \coordinate (e3) at (60:\r);
        \coordinate (d3) at (75:\r);
        \coordinate (e2) at (105:\r);
        \coordinate (d2) at (120:\r);
        \coordinate (e1) at (150:\r);
        \coordinate (d1) at (165:\r);

        \path[fill=root] (o) -- (a) -- (c5) -- cycle;
        \path[fill=front1] (a) -- (b) -- (c5) -- cycle;

        \path[fill=facet2] (o) -- (c5) -- (c4) -- (c3) -- (c2) -- (c1) -- (b') -- cycle;
        \draw[loosely dashed,very thick] (o) -- (c4);
        \draw[loosely dashed,very thick] (o) -- (c3);
        \draw[loosely dashed,very thick] (o) -- (c2);
        \draw[loosely dashed,very thick] (o) -- (c1);

        \path[fill=facet1] (o) -- (b') -- (a') -- cycle;
        \draw[loosely dashed,very thick] (o) -- (b');

        \draw[loosely dashed,very thick] (a') -- (o) -- (a);
        \draw[loosely dashed,very thick] (o) -- (c5);
        \draw[dashed,very thick] (a) -- (c5);
        \draw[densely dashed] (a) -- (b);
        \draw (a') arc (-112.5:-45:\r);

        \path[fill=front4] (c2) -- (e1) -- (d1) -- (c1) -- cycle;
        \draw[dashed,very thick] (c2) -- (c1);
        \draw[densely dashed] (e1) -- (c1);

        \path[fill=front1] (b) -- (c5) -- (d4) -- cycle;
        \path[fill=front2] (b) -- (d4) -- (c4) -- (e3) -- (d3) -- cycle;
        \path[fill=front3] (b) -- (d3) -- (c3) -- (e2) -- (d2) -- cycle;
        \path[fill=front4] (b) -- (d2) -- (c2) -- (e1) -- (d1) -- cycle;

        \begin{scope}
            \clip (b) -- (c5) -- (d4) -- (c4) -- (e3) -- (d3) -- (c3) -- (e2) -- (d2) -- (c2) -- (e1) -- (d1) -- cycle;
        
            \draw[dashed,very thick,opacity=0.25] (c5) -- (c4) -- (c3) -- (c2) -- (c1);
            \draw[densely dashed,opacity=0.25] (e3) -- (c3);
            \draw[densely dashed,opacity=0.25] (e2) -- (c2);
            \draw[densely dashed,opacity=0.25] (e1) -- (c1);
            \draw[loosely dashed,very thick,opacity=0.25] (o) -- (c5);
            \draw[loosely dashed,very thick,opacity=0.25] (o) -- (c4);
            \draw[loosely dashed,very thick,opacity=0.25] (o) -- (c3);
            \draw[loosely dashed,very thick,opacity=0.25] (o) -- (c2);
            \draw[dashed,very thick,opacity=0.25] (a) -- (c5);
        \end{scope}

        \draw (b) -- (c5);
        \draw[very thick] (c1) -- (b') -- (a');
        \draw (b) -- (d4) -- (c4);
        \draw (b) -- (d3) -- (c3);
        \draw (b) -- (d2) -- (c2);
        \draw (b) -- (d1) -- (c1);

        \draw (c4) -- (e3) -- (d3);
        \draw (c3) -- (e2) -- (d2);
        \draw (c2) -- (e1) -- (d1);
        \draw (c5) -- (d4) -- (d3) -- (d2) -- (d1);
        \draw (e3) -- (d4);
        \draw (e2) -- (d3);
        \draw (e1) -- (d2);

        \node[above left=-2pt] at (d1) {$\alpha_1$};
        \node[above left=-2pt] at (e1) {$\beta_1$};
        \node[above] at (d2) {$\alpha_2$};
        \node[above] at (e2) {$\beta_2$};
        \node[above] at (d3) {$\ldots$};
    \end{tikzpicture}
    \
    \tikzmath{\r = 5.17;}
    \begin{tikzpicture}[line cap=round, line join=round, thick,baseline=(current bounding box.north)]
        \coordinate (o) at (-90:\r);
        \coordinate (a) at (-67.5:\r);
        \coordinate (b) at (-45:\r);
        \coordinate (c5) at (0:\r);
        \coordinate (c4) at (45:\r);
        \coordinate (c3) at (90:\r);
        \coordinate (c2) at (135:\r);
        \coordinate (c1) at (180:\r);
        \coordinate (b') at (225:\r);
        \coordinate (a') at (247.5:\r);

        \coordinate (d4) at (30:\r);
        \coordinate (e3) at (60:\r);
        \coordinate (d3) at (75:\r);
        \coordinate (e2) at (105:\r);
        \coordinate (d2) at (120:\r);
        \coordinate (e1) at (150:\r);
        \coordinate (d1) at (165:\r);

        \coordinate (g3) at (65:\r);
        \coordinate (f3) at (70:\r);
        \coordinate (g2) at (110:\r);
        \coordinate (f2) at (115:\r);

        \path[fill=root] (o) -- (a) -- (c5) -- cycle;
        \path[fill=front1] (a) -- (b) -- (c5) -- cycle;

        \path[fill=facet2] (o) -- (c5) -- (c4) -- (c3) -- (c2) -- (c1) -- (b') -- cycle;
        \draw[loosely dashed,very thick] (o) -- (c4);
        \draw[loosely dashed,very thick] (o) -- (c3);
        \draw[loosely dashed,very thick] (o) -- (c2);
        \draw[loosely dashed,very thick] (o) -- (c1);

        \path[fill=facet1] (o) -- (b') -- (a') -- cycle;
        \draw[loosely dashed,very thick] (o) -- (b');

        \draw[loosely dashed,very thick] (a') -- (o) -- (a);
        \draw[loosely dashed,very thick] (o) -- (c5);
        \draw[dashed,very thick] (a) -- (c5);
        \draw[densely dashed] (a) -- (b);
        \draw (a') arc (-112.5:-45:\r);

        \path[fill=front4] (c2) -- (e1) -- (d1) -- (c1) -- cycle;
        \draw[dashed,very thick] (c2) -- (c1);
        \draw[densely dashed] (e1) -- (c1);

        \path[fill=front1] (b) -- (c5) -- (d4) -- cycle;
        \path[fill=front2] (b) -- (d4) -- (c4) -- (e3) -- (d3) -- cycle;
        \path[fill=front3] (b) -- (d3) -- (c3) -- (e2) -- (d2) -- cycle;
        \path[fill=front4] (b) -- (d2) -- (c2) -- (e1) -- (d1) -- cycle;

        \path[fill=complex] (b) -- (d4) -- (e3) -- (g3) -- (f3) -- (d3) -- (e2) -- (g2) -- (f2) -- (d2) -- (e1) -- (d1) -- cycle;
        \draw (e3) -- (g3) -- (f3) -- (d3);
        \draw (e2) -- (g2) -- (f2) -- (d2);

        \draw[densely dashed,opacity=0.25] (e2) -- (d2);
        \draw[densely dashed,opacity=0.25] (e2) -- (f2);
        \draw (e1) -- (f2);
        \draw (e1) -- (g2);

        \draw[densely dashed,opacity=0.25] (e3) -- (d3);
        \draw[densely dashed,opacity=0.25] (e3) -- (f3);
        \draw (e2) -- (f3);
        \draw (e2) -- (g3);

        \begin{scope}
            \clip (b) -- (c5) -- (d4) -- (c4) -- (e3) -- (d3) -- (c3) -- (e2) -- (d2) -- (c2) -- (e1) -- (d1) -- cycle;
        
            \draw[dashed,very thick,opacity=0.25] (c5) -- (c4) -- (c3) -- (c2) -- (c1);
            \draw[densely dashed,opacity=0.25] (e3) -- (c3);
            \draw[densely dashed,opacity=0.25] (e2) -- (c2);
            \draw[densely dashed,opacity=0.25] (e1) -- (c1);
            \draw[loosely dashed,very thick,opacity=0.25] (o) -- (c5);
            \draw[loosely dashed,very thick,opacity=0.25] (o) -- (c4);
            \draw[loosely dashed,very thick,opacity=0.25] (o) -- (c3);
            \draw[loosely dashed,very thick,opacity=0.25] (o) -- (c2);
            \draw[dashed,very thick,opacity=0.25] (a) -- (c5);
        \end{scope}

        \draw (b) -- (c5);
        \draw[very thick] (c1) -- (b') -- (a');
        \draw (b) -- (d4) -- (c4);
        \draw (d3) -- (c3);
        \draw (d2) -- (c2);
        \draw (b) -- (d1) -- (c1);

        \draw (c4) -- (e3);
        \draw (c3) -- (e2);
        \draw (c2) -- (e1) -- (d1);
        \draw (c5) -- (d4);
        \draw (e3) -- (d4);
        \draw (e2) -- (d3);
        \draw (e1) -- (d2);

        \path (e2) -- (b) node[pos=0.33] {$X(s,t-1)$};
    \end{tikzpicture}
    \caption{From left to right and then down, attaching a copy of $X(s,t-1)$ to $X(s-1,K'(s,t-1))$. We have ``zoomed in" on one fillet of $X(s-1,K'(s,t-1))$ and the teeth neighboring it. The numerous other vertices of $X(s-1,K'(s,t-1))$, if pictured, would lie along the bottom arc.}
    \label{fig-xst-recursion}
\end{figure}

\end{itemize}
\end{construction}

The proofs of \ref{property-3ball}-\ref{property-hub} are routine double induction. We sketch the proof of \ref{property-3ball} below and omit \ref{property-circle}-\ref{property-hub}.

We can easily check that $X(s,1)$ is a disc for all $s$ and that $X(1,t)$ is a 3-ball for all $t>1$. In the cases $s,t>1$, recall that we obtain $X(s,t)$ from $X(s-1,K'(s,t-1))$ by repeatedly subdividing faces, attaching a facet along a disc, or attaching a copy of $X(s,t-1)$ along a disc.

If we subdivide faces of a 3-ball, or if we attach two 3-balls along a disc, the result is a 3-ball. Thus, if $X(s,t-1)$ and $X(s-1,K'(s,t-1))$ are both 3-balls, then $X(s,t)$ is a 3-ball and we are done. This leaves only the case $t=2$, in which $X(s-1,K'(s,t-1))$ is a 3-ball and $X(s,t-1)$ is a disc. However, if $t=2$, then the disc along which we attach our two complexes is equal to $X(s,t-1)$ itself; thus, the result is still a 3-ball. It follows that $X(s,t)$ is a 3-ball for all $t>1$.

At last, we use $X(s,t)$ to construct our family of 3-spheres $S(s,t)$. Note that all vertices of $X(s,t)$ lie on the boundary of $X(s,t)$.

\begin{definition}
    For all $s$ and all $t>1$, we define $S(s,t)$ as the 3-sphere obtained by coning over the boundary of $X(s,t)$.
\end{definition}

\subsection{Proofs of shellability}
\label{ss-shellable}
We will now prove that $S(s,t)$ is shellable and dual shellable.

\begin{proposition}
\label{S-shellable}
    $S(s,t)$ is shellable for all $s$ and all $t>1$.
\end{proposition}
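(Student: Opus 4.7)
The plan is to construct a shelling of $S(s,t)$ in two stages: first shell the underlying 3-ball $X(s,t)$ in the ``bottom-to-top'' order suggested by the bird's-eye view, and then append the cone facets $v*F$ arising from coning over $\partial X(s,t)$, taken in a compatible order.

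For the first stage, I would prove by double induction on $s$ and $t$ that $X(s,t)$ admits a shelling $(Y_1,\ldots,Y_N)$, following the assembly prescribed by Construction \ref{ball-construction}. The base case $X(1,t)$ is handled by directly shelling the five explicitly described facets of Figure \ref{fig-x1t} (the gray facet first, then blue, pink, yellow, and finally the $4(t-2)$ simplex facets, checking that each intersects the previous union in a disc). For the inductive step, start from a shelling of $X(s-1,K'(s,t-1))$ supplied by the inductive hypothesis, then process the fillets of $X(s-1,K'(s,t-1))$ one at a time: for each fillet, first append the two new facets introduced in steps \ref{step-2} and \ref{step-3} (each attaches along a disc on the current complex by the strong regularity verified in those steps), carry out the subdivisions of step \ref{step-4} (which act on individual facets and preserve shellability), and finally append a shelling of the attached copy of $X(s,t-1)$ obtained from the inductive hypothesis. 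Lemma \ref{3-sphere-shelling}(i) then certifies that the resulting order is a shelling of $X(s,t)$.

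For the second stage, $S(s,t)$ is obtained from $X(s,t)$ by coning over $\partial X(s,t)$ at a new vertex $v$, adding one pyramid facet $v*F$ for each ridge $F$ of $\partial X(s,t)$. Since $\partial X(s,t)$ is a strongly regular CW $2$-sphere, Theorem \ref{Steinitz} and Theorem \ref{line-shelling} yield a shelling $(F_1,\ldots,F_M)$ of $\partial X(s,t)$. Append $v*F_1,\ldots,v*F_M$ to the shelling of $X(s,t)$ in this order. Then $(v*F_1)\cap X(s,t)=F_1$ is a disc, and for $i>1$ the intersection of $v*F_i$ with the union of earlier facets equals $F_i$ together with the triangles $v*e$ for edges $e\subset\partial F_i$ shared with some $F_j$ with $j<i$; by the shelling property of $(F_1,\ldots,F_M)$ these edges form a connected arc on $\partial F_i$, so the union is a disc on $\partial(v*F_i)$. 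A final application of Lemma \ref{3-sphere-shelling}(i) finishes the proof.

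The main obstacle is stage one, namely verifying that the facets added while processing a single fillet in the inductive step really do slot into the growing complex along discs. The local incidences after steps \ref{step-2}-\ref{step-5} are intricate, and the argument depends essentially on properties \ref{property-fillet} and \ref{property-hub}: property \ref{property-hub} ensures that the hub does not create spurious incidences when we glue in a copy of $X(s,t-1)$, while property \ref{property-fillet} guarantees that the interface between that copy and the ambient complex is exactly a disc with no extraneous identifications. Once these disc properties are checked using \ref{property-3ball}-\ref{property-hub}, the inductive construction of the shelling and its extension to $S(s,t)$ go through.
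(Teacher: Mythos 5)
Your overall strategy, shelling $X(s,t)$ bottom-to-top and then handling the cone vertex, echoes the paper, but there is a real gap in your first stage that the paper's proof is specifically designed to avoid.

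The paper never claims, and never needs, that the 3-ball $X(s,t)$ is itself shellable. What it proves by double induction is strictly weaker: the facets of $X(s,t)$ admit an order $Y_1,\ldots,Y_n$ in which $Y_i\cap\bigcup_{j<i}Y_j$ is \emph{contained in} the bottom of $\partial Y_i$. That set is a subcomplex of a disc, not a priori a disc. Parts of the bottom of $\partial Y_i$ lie on $\partial X(s,t)$ (and hence meet no earlier $Y_j$), and nothing in the inductive argument rules out that the remaining triangles, those actually shared with earlier facets, form a disconnected set or an annulus. Your claim that the base case ``checking that each intersects the previous union in a disc'' works, and that the inductive step slots facets in ``along discs,'' is exactly the statement that the paper circumvents rather than proves, and you offer no argument for it. Note also that you cite Lemma~\ref{3-sphere-shelling}(i) to certify the shelling of $X(s,t)$, but that lemma applies only to 3-\emph{spheres}, not to the 3-ball $X(s,t)$; even morally, the disc criterion for the last facet differs in the ball case.

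The paper's workaround is the content you are missing: it does not put all the cone facets at the end. Writing $v$ for the apex and $T_1,\ldots,T_k$ for the cone facets whose bases are the ridges on the \emph{bottom} of $\partial X(s,t)$, the paper uses the shelling order
\[
(T_1,\ldots,T_k,\ Y_1,\ldots,Y_n,\ T_{k+1},\ldots,T_m).
\]
Because $T_1,\ldots,T_k$ tile the bottom of $\partial X(s,t)$, each $Y_i$ meets $\bigcup_{j\le k}T_j\cup\bigcup_{j<i}Y_j$ in \emph{exactly} the bottom of $\partial Y_i$, which property~\ref{property-circle} guarantees is a disc. The disc condition is then automatic, and no separate shellability proof for $X(s,t)$ is needed. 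Your second stage (appending $v*F_1,\ldots,v*F_M$ following a line shelling of $\partial X(s,t)$) is fine as far as it goes, but it rests on the unproved first stage. To repair your proof you would either have to establish that the bottom-to-top order is a genuine shelling of the 3-ball $X(s,t)$, which is a nontrivial additional induction, or adopt the paper's interleaved order and prove the weaker ``contained in the bottom'' statement instead.
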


\begin{proof}
    First, we will show that for all $s$ and $t$, the facets of $X(s,t)$ can be listed in some order $Y_1, \ldots, Y_n$ such that $Y_i \cap \bigcup_{j=1}^{i-1} Y_j$ is contained in the bottom of $\partial Y_i$ for $i=1,\ldots,n$. Intuitively, this order represents a way of ``stacking" facets from lowest to highest.

    We proceed by double induction on $s$ and $t$. For the base case $s=1,t>1$, we may order the facets of $X(1,t)$ as listed in Construction \ref{ball-construction}. The base case $t=1$ is trivial, as $X(s,1)$ has no facets.

    For the inductive step, fix $s,t>1$, and assume that both $X(s,t-1)$ and $X(s-1,K'(s,t-1))$ admit such an order. Recall Steps 1-5 of Construction \ref{ball-construction}. We begin with the facets of $X(s-1,K'(s,t-1))$ listed in the given order. On Step 2, we prepend the new facet to our list. On Step 3, we do the same. After Step 4, our list still satisfies the required property, as the intersection of any two facets either stays the same or grows from a triangle into a quadrilateral (not changing which facet is on top). Finally, on Step 5, we list the facets of our new copy of $X(s,t-1)$ in the given order and append them to our current list. We do this process for each repetition of Steps 1-5 to obtain the desired list of facets of $X(s,t)$. This completes the inductive step.

    Now, fix $s$ and $t>1$. Let $Y_1,\ldots,Y_n$ be the list of facets of $X(s,t)$ described above. Let $v$ be the new vertex introduced when constructing $S(s,t)$ from $X(s,t)$. Then each facet of $S(s,t)$ containing $v$ is isomorphic to a pyramid with apex $v$ and base contained in $\partial X(s,t)$.

    Let $W$ be the union of all pyramidal facets having apex $v$ and base contained in the bottom of $\partial X(s,t)$. Then $S(s,t)/v$ is a 2-sphere, and $W/v \subset S(s,t)/v$ is a closed disc. Thus, by Lemma \ref{2-sphere-shelling}, there exists a shelling $(T_1/v, \ldots, T_k/v, \ldots, T_m/v)$ of $S(s,t)/v$ such that $T_1, \ldots, T_k$ are the facets of $W$.

    We will use Lemma \ref{3-sphere-shelling}i to prove that
    \[
        ( T_1,\ \ldots,\ T_k,\ Y_1,\ \ldots,\ Y_n,\ T_{k+1},\ \ldots,\ T_m)
    \]
    is a shelling of $S(s,t)$. For $i=1,\ldots,n$, we can see that
    \[
        Y_i \cap \left( \bigcup_{j=1}^k T_j \cup \bigcup_{j=1}^{i-1} Y_j \right)
    \]
    is exactly the bottom of $\partial Y_i$, which is homeomorphic to a disc. Meanwhile, since $(T_1/v, \ldots, T_m/v)$ is a shelling of $S(s,t)/v$, for $i=2, \ldots, m-1$, we know that $T_i/v \cap \bigcup_{j=1}^{i-1}T_j/v$ is a union of consecutive edges of $T_i/v$ homeomorphic to an interval. Thus, $T_i \cap \bigcup_{j=1}^{i-1}T_j$ is a union of consecutive lateral triangles of $T_i$ homeomorphic to a disc.

    Consequently, for $i=k+1, \ldots, m-1$,
    \[
        T_i \cap \left( \bigcup_{j=1}^k T_j \cup \bigcup_{j=1}^n Y_j \cup \bigcup_{j=k+1}^{i-1} T_j \right)
    \]
    is a union of consecutive lateral triangles of $T_i$ homeomorphic to a disc, plus the base of $T_i$. This union is itself homeomorphic to a disc.

    By Lemma \ref{3-sphere-shelling}i, we may conclude that the above is a shelling of $S(s,t)$.
\end{proof}

\begin{proposition}
\label{S-dual-shellable}
    $S(s,t)$ is dual shellable for all $s$ and all $t>1$.
\end{proposition}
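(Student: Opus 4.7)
The plan is to mirror the proof of Proposition~\ref{S-shellable}, sweeping vertices in place of stacking facets. I would inductively construct a sweep order on the vertices of $X(s,t)$ and then insert the coning vertex $v$ at a suitable position in that order to obtain a dual shelling of $S(s,t)$.

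First, by double induction on $s$ and $t$, I would show that the vertices of $X(s,t)$ can be listed as $u_1, \ldots, u_N$ so that for each $i \geq 2$, the set
\[
X(s,t)/u_i \cap \bigcup_{j<i} \operatorname{st}(u_j)
\]
is a nonempty union of vertex stars in the 2-ball $X(s,t)/u_i$ that lies on a designated portion of the boundary $\partial(X(s,t)/u_i)$ corresponding to a fixed side of $\partial X(s,t)$. This is the vertex-side analogue of the ``$Y_i \cap \bigcup_{j<i} Y_j$ contained in the bottom of $\partial Y_i$'' condition used in the shellability proof. The base cases ($t=1$ trivial since $X(s,1)$ has no facets, and $s=1$ by direct inspection of Figure~\ref{fig-x1t}) are routine. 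The inductive step interleaves the sweep orders on $X(s-1,K'(s,t-1))$ and on the $K(s-1,K'(s,t-1))$ copies of $X(s,t-1)$ in synchrony with Construction~\ref{ball-construction}, processing each copy of $X(s,t-1)$ immediately after its attaching fillet has been fully swept in $X(s-1,K'(s,t-1))$.

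Given such an ordering on $X(s,t)$, let $v$ be the coning vertex and let $k$ be the number of indices $i$ for which $u_i$ has been swept by the time the designated portion of $\partial X(s,t)$ is entirely covered. I would claim that
\[
(u_1, \ldots, u_k,\ v,\ u_{k+1}, \ldots, u_N)
\]
is a dual shelling of $S(s,t)$, to be verified via Lemma~\ref{3-sphere-shelling}(ii). For $i \leq k$, the required open disc in the 2-sphere $S(s,t)/u_i$ would arise from the inductive sweep condition on the 2-ball $X(s,t)/u_i$ together with a small controlled extension across $\partial(X(s,t)/u_i)$ into the cap dual to $v$. For $i = k+1$ (processing $v$), the vertex figure $S(s,t)/v$ is the 2-sphere $\partial X(s,t)$, in which $u_1, \ldots, u_k$ cover the designated closed half-disc, so Lemma~\ref{2-sphere-dual-shelling} would yield the required dual-shelling step. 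For $i > k+1$, the star of $v$ already covers the cap of $S(s,t)/u_i$, and the stars of the previously processed $u_j$'s extend this cap through $X(s,t)/u_i$ to produce an open disc.

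The hardest step will be verifying the open-disc condition when $i > k+1$, where the interaction between $v$'s cap in $S(s,t)/u_i$ and the sweep region in $X(s,t)/u_i$ must be carefully tracked through the recursion. Maintaining the inductive ``designated side'' property throughout the sweep should guarantee that the cap and the swept region meet along a single arc on $\partial(X(s,t)/u_i)$ at every intermediate step, keeping their union simply connected and hence an open disc in $S(s,t)/u_i$.
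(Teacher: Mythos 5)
Your plan departs from the paper's proof in two structural ways, and in doing so leaves a genuine gap at the step you yourself flag as the hardest.

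The paper's proof does \emph{not} mirror the shellability argument the way you expect. For shellability, the paper does construct a facet ordering by double induction. But for dual shellability it takes a single, global, non-recursive order: the vertices of $X(s,t)$ listed counterclockwise by angle (hub first), with the coning vertex $v_n$ placed \emph{last}. The open-disc condition for each $v_i$ is then verified directly, with no induction, by a geometric argument: property \ref{property-circle} is used to cut $X(s,t)/v_i$ with a plane $J$ through $v_i$ and the edge $v_{n-1}v_1$, showing that the swept region deformation retracts onto the half-disc $X(s,t)/v_i\cap J^+$, and then Riemann mapping gives the disc homeomorphism. The angular order is essential here: it guarantees that the already-swept vertices lie to one side of $J$, so the swept portion of each vertex figure touches the boundary in a single arc.

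Your proposal instead builds a \emph{nested} sweep order by double induction, interleaving copies of $X(s,t-1)$ with $X(s-1,K'(s,t-1))$ and processing each copy ``immediately after its attaching fillet has been fully swept.'' This is not an angular sweep: the vertices $\alpha_i,\beta_i$ inserted in Step~\ref{step-4} are angularly interspersed with the fillet vertices $c_i$, so finishing the entire fillet before touching the new copy means jumping back and forth on the circle. Once the order is no longer monotone in angle, the clean half-space picture is lost, and there is no obvious reason the stars of the already-processed vertices meet the current vertex figure in a region bounded by a single arc. This is precisely the ``designated side'' invariant your proof rests on, and it is not established; your sketch asserts it should be maintainable ``through the recursion'' but gives no mechanism, and the angular-interleaving issue above suggests it may simply fail for the order as you describe it. Note also that your order is underspecified (when are the $\beta_i$, which belong neither to $X(s-1,K'(s,t-1))$ nor to the copy of $X(s,t-1)$, to be processed?).

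Inserting the coning vertex in the middle is a second, separable deviation. It is not obviously wrong, but it forces you to verify three qualitatively different cases ($i\leq k$, $i=k+1$, $i>k+1$), and the case $i>k+1$ inherits the same unresolved single-arc concern: once $\operatorname{st}v$ is in play, simple connectedness of $U\cup\operatorname{st}w$ in $S(s,t)/u_i$ fails unless $U$ meets $\partial(X(s,t)/u_i)$ in exactly one arc, which is exactly the invariant you have not proved. Putting $v$ last, as the paper does, eliminates two of those three cases and lets the whole argument ride on one geometric observation.

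In short: the instinct to mirror Proposition~\ref{S-shellable} is natural, but the paper's dual-shelling proof is deliberately different in kind --- a direct geometric verification on the angular order rather than a recursively built sweep --- and your proposed recursive order has a gap at its core that the angular order sidesteps entirely.
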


\begin{proof}
    Suppose $t>1$. Let $v_1,\ldots,v_{n-1}$ be the vertices of $X(s,t)$ in counterclockwise order, where $v_1$ is the hub of $X(s,t)$. Let $v_n$ be the new vertex introduced when constructing $S(s,t)$ from $X(s,t)$. We will use Lemma \ref{3-sphere-shelling}ii to show that $(v_1,\ldots,v_n)$ is a dual shelling of $S(s,t)$.

    Fix an arbitrary $1<i<n$. We will first prove that $S(s,t)/v_i \cap \bigcup_{j=1}^{i-1}\operatorname{st}v_j$ is a union of vertices of $S(s,t)/v_i$ and their stars. Let $Z$ be a face of $S(s,t)$ such that $v_i,v_j \in Z$ for some $j<i$. List the vertices of $Z$ in index order, and let $v_k$ be the vertex of $Z$ immediately preceding $v_i$, so $j\leq k < i$. If $Z \subseteq X(s,t)$, then by \ref{property-circle}, the image of $Z$ projected onto the page is the convex hull of the images of its vertices. Thus, $v_iv_k$ is an edge of $Z$. If instead $Z \not\subseteq X(s,t)$, then $Z$ is isomorphic to a pyramid with apex $v_n$ and base contained in $X(s,t)$. By applying our previous argument to the base of $Z$, we see again that $v_iv_k$ is an edge of $Z$.

    We have shown that for all $j<i$ and all faces $Z \subseteq S(s,t)$, if $\operatorname{relint}Z \subseteq \operatorname{st}v_i \cap \operatorname{st}v_j$, then $\operatorname{relint}Z \subset \operatorname{st}(v_iv_k)$ for some edge $v_iv_k$ with $j \leq k < i$. Thus, $\operatorname{st}v_i \cap \bigcup_{j=1}^{i-1}\operatorname{st}v_j$ is a union of stars of edges incident to $v_i$. It follows that $S(s,t)/v_i \cap \bigcup_{j=1}^{i-1}\operatorname{st}v_j$ is a union of stars of vertices of $S(s,t)/v_i$.
    
    Second, we will prove that $S(s,t)/v_i \cap \bigcup_{j=1}^{i-1}\operatorname{st}v_j$ is homeomorphic to an open disc. Let $w$ be the vertex of $S(s,t)/v_i$ induced by edge $v_iv_n$. If $i=n-1$, then $S(s,t)/v_i \cap \bigcup_{j=1}^{i-1}\operatorname{st}v_j = S(s,t)/v_i\backslash w$, so we are done.

    Suppose $i<n-1$. In our picture of $X(s,t)$, let $H$ be a plane perpendicular to the page and intersecting the relative interiors of edges $v_{i-1}v_i,v_iv_{i+1}$. Let $H^+$ be the resulting open half-space containing $v_i$. As a consequence of \ref{property-circle}, any $k$-face of $X(s,t)$ containing $v_i$ must intersect $\operatorname{cl}H^+$ on a closed $k$-ball. Thus, by definition, the subdivision of $H \cap X(s,t)$ induced by $X(s,t)$ is exactly the vertex figure $X(s,t)/v_i$. Let $U=X(s,t)/v_i \cap \bigcup_{j=1}^{i-1}(\operatorname{st}v_j \cap X(s,t))$.

    Let $J$ be a second plane perpendicular to the page, intersecting $v_i$ and the relative interior of edge $v_{n-1}v_1$. Let $J^+$ be the resulting open half-space containing $v_1, \ldots, v_{i-1}$, and let $U'=X(s,t)/v_i \cap J^+$. Then $U'$ is homeomorphic to a half-disc with $U' \cap \partial U' = \partial(X(s,t)/v_i) \cap J^+$. For any edge $v_iv_j$ with $j<i$, we have $\operatorname{relint}v_iv_j \subset J^+$, so the vertex $v_iv_j/v_i$ of $X(s,t)/v_i$ is in $U'$.

    Let $Z/v_i$ be a face of $X(s,t)/v_i$. If $\operatorname{relint}(Z/v_i) \subseteq U$, then as previously shown, $v_iv_j/v_i \in Z_i/v_i$ for some edge $v_iv_j$ with $j<i$. Conversely, if $\operatorname{relint}(Z/v_i) \not\subseteq U$, then $Z \cap \{v_1, \ldots, v_{i-1}\} = \varnothing$. In this case, the image of $Z$ on the page is a convex polygon with vertices outside the image of $J^+$; thus, $Z \cap J^+ = \varnothing$, so $Z/v_i \cap U'=\varnothing$. We may conclude that the faces of $X(s,t)/v_i$ with relative interiors contained in $U$ are exactly the faces that nontrivially intersect $U'$. Consequently, $U$ deformation retracts onto $U'$, so $U$ is simply connected.

    We obtain the vertex figure $S(s,t)/v_i$ by coning over the boundary of $X(s,t)/v_i$ at $w$. For each nonempty $k$-face (i.e. edge or vertex) $e$ of $\partial(X(s,t)/v_i)$, let $\hat e$ be the $(k+1)$-face of $S(s,t)/v_i$ containing $e$ and $w$. Then
    \[
        S(s,t)/v_i \cap \bigcup_{j=1}^{i-1}\operatorname{st}v_j = U \cup \bigcup \{\operatorname{relint} \hat e \mid \operatorname{relint}e \subseteq U \cap \partial U \}.
    \]
    It follows that $S(s,t)/v_i \cap \bigcup_{j=1}^{i-1} \operatorname{st}v_j$ deformation retracts onto $U$. Thus, $S(s,t)/v_i \cap \bigcup_{j=1}^{i-1} \operatorname{st}v_j$ is simply connected. We may conclude that $S(s,t)/v_i \cap \bigcup_{j=1}^{i-1} \operatorname{st}v_j$ is homeomorphic to an open disc by the Riemann mapping theorem (see \cite{Luo17}).

    For each $1 < i < n$, we have shown that $S(s,t)/v_i \cap \bigcup_{j=1}^{i-1} \operatorname{st} v_j$ is a union of vertices of $S(s,t)/v_i$ and their stars and is homeomorphic to an open disc. By Lemma \ref{3-sphere-shelling}ii, $(v_1, \ldots, v_n)$ is therefore a dual shelling of $S(s,t)$.
\end{proof}

\subsection{Face asymptotics}
\label{ss-asymptotics}

In this subsection, we will derive recursive formulas for the face numbers of $S(s,t)$ and determine their asymptotic rates of growth. Our formulas will depend on $K(s,t)$, $K'(s,t)$, and one more function that we now define.

Recall that when constructing $X(s,t)$ from $X(s-1,K'(s,t-1))$, each time we attach a copy of $X(s,t-1)$, we introduce exactly two new root triangles that do not correspond with a tooth.

\begin{definition}
\label{R-defn}
    For all positive integers $s$ and $t$, we define $R(s,t)$ as the number of root triangles minus the number of teeth in $X(s,t)$. Equivalently,
    \begin{itemize}
        \item $R(1,t)=0$ for all $t$.
        \item $R(s,1)=0$ for all $s$.
        \item $R(s,t)=2K(s-1,K'(s,t-1))+R(s-1,K'(s,t-1))$ for $s,t>1$.
    \end{itemize}
\end{definition}

\begin{lemma}
\label{R-bounds}
    For all $s,t>1$,
    \[
        \frac{2K(s,t)}{K(s,t-1)} \leq R(s,t) < \frac{3K(s,t)}{K(s,t-1)}.
    \]
\end{lemma}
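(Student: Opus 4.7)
The plan is to first rewrite the desired bounds in a form more directly comparable to the recursion for $R(s,t)$. Since $K(s,t) = K(s,t-1)K(s-1,K'(s,t-1))$, we have $K(s,t)/K(s,t-1) = K(s-1,K'(s,t-1))$, so the inequalities to prove become
\[
    2K(s-1,K'(s,t-1)) \;\leq\; R(s,t) \;<\; 3K(s-1,K'(s,t-1)).
\]
The lower bound is then immediate from the recursion $R(s,t) = 2K(s-1,K'(s,t-1)) + R(s-1,K'(s,t-1))$ together with the observation that $R(s',t') \geq 0$ for all $s', t'$ (which is clear from the definition by a trivial induction).

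The upper bound needs a little more because the ``extra term'' $R(s-1,K'(s,t-1))$ must itself be controlled against $K(s-1,K'(s,t-1))$. The natural way to do this is to prove the strictly stronger auxiliary claim
\[
    R(s,t) \;<\; K(s,t) \qquad \text{for all positive integers } s, t,
\]
by induction on $s$. The base cases $s=1$ and $t=1$ are immediate, since $R(1,t) = 0 < 2 = K(1,t)$ and $R(s,1) = 0 < 2^s = K(s,1)$. For the inductive step, fix $s > 1$ and $t > 1$, and suppose the auxiliary claim holds at $s-1$. Then
\[
    R(s,t) = 2K(s-1,K'(s,t-1)) + R(s-1,K'(s,t-1)) \;<\; 3K(s-1,K'(s,t-1)) \;=\; \frac{3K(s,t)}{K(s,t-1)}.
\]
This already gives the upper bound claimed in Lemma \ref{R-bounds}. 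To simultaneously close the induction for the auxiliary claim, observe that Lemma \ref{K-properties}i,v gives $K(s,t-1) \geq K(s,1) = 2^s \geq 4$, so $3K(s,t)/K(s,t-1) \leq K(s,t)$, which yields $R(s,t) < K(s,t)$.

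There is no real obstacle here: the argument is a clean double bookkeeping with the recursions, and the only subtlety is remembering to strengthen the inductive hypothesis from ``$R < 3K/K(\cdot,\cdot-1)$'' to ``$R < K$'' so that the extra $R(s-1,K'(s,t-1))$ term on the right side of the recursion is absorbed cleanly. The inequality $K(s,t-1) \geq 3$ that is needed at the end of the inductive step is exactly the place where the base case $s \geq 2$ on $K$ is used, and it holds comfortably since $K(s,t-1) \geq 4$ in the range $s \geq 2$, $t \geq 2$ relevant to the lemma.
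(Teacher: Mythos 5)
Your proof is correct and follows essentially the same route as the paper's: both establish the auxiliary claim $R(s,t) < K(s,t)$ by induction (using $K(s,t-1) \geq 2^s > 3$ to absorb the factor of $3$), and then read off the stated bounds from the recursion $R(s,t) = 2K(s-1,K'(s,t-1)) + R(s-1,K'(s,t-1))$ combined with $K(s,t)/K(s,t-1) = K(s-1,K'(s,t-1))$.
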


\begin{proof}
    First, we will show that $R(s,t)<K(s,t)$ for all $s$ and $t$. This is clearly true if $s=1$ or if $t=1$, as $R(s,t)$ then equals zero. The cases $s,t>1$ follow by induction:
    \begin{align*}
        R(s,t) &= 2K(s-1,K'(s,t-1))+R(s-1,K'(s,t-1))\\
        &< 3K(s-1,K'(s,t-1))\\
        &< K(s,t-1)K(s-1,K'(s,t-1))\\
        &= K(s,t).
    \end{align*}
    The inequality $3<K(s,t-1)$ used above is a consequence of Lemma \ref{K-properties}v.

    We have shown that $R(s,t)<K(s,t)$ for all $s$ and $t$. For $s,t>1$, it follows that
    \[
        \frac{2K(s,t)}{K(s,t-1)} = 2K(s-1,K'(s,t-1)) \leq R(s,t) < 3K(s-1,K'(s,t-1)) = \frac{3K(s,t)}{K(s,t-1)}.\qedhere
    \]
\end{proof}

\begin{definition}
    For $i=0,1,2,3$ and positive integers $s$ and $t$, we define functions $F_i(s,t)$ recursively as follows. We denote by $F(s,t)$ the vector $(F_0(s,t),F_1(s,t),F_2(s,t),F_3(s,t))$.
    \begin{itemize}
        \item $F(1,t) = (2t+7,\ 10t+15,\ 16t+12,\ 8t+4)$ for all $t>1$.
        \item $F(s,1) = (2^{s+1}+5,\ 3 \cdot 2^{s+1}+9,\ 2^{s+3} + 8,\ 2^{s+2} + 4)$ for all $s$.
        \item For $s,t>1$,\vspace{-\baselineskip}
    \end{itemize}
        \begin{align*}
            F_0(s,t) &= K(s-1,K'(s,t-1))[F_0(s,t-1) + R(s,t-1)-2] + F_0(s-1,K'(s,t-1)),\\
            F_1(s,t) &= K(s-1,K'(s,t-1))[F_1(s,t-1) + 3K'(s,t-1) + 3R(s,t-1) - 4] + F_1(s-1,K'(s,t-1)),\\
            F_2(s,t) &= K(s-1,K'(s,t-1))[F_2(s,t-1) + 3K'(s,t-1) + 4R(s,t-1) + 1] + F_2(s-1,K'(s,t-1)),\\
            F_3(s,t) &= K(s-1,K'(s,t-1))[F_3(s,t-1) + 2R(s,t-1) + 3] + F_3(s-1,K'(s,t-1)).
        \end{align*}
\end{definition}

\begin{proposition}
\label{face-numbers}
    The $f$-vector of $S(s,t)$ agrees with $F(s,t)$ for all positive integers $s$ and $t>1$.
\end{proposition}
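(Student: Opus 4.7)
The plan is a double induction on $s$ and $t$ paralleling the recursive definition of $F$ and Construction 4.16. Since $S(s,t)$ is obtained from $X(s,t)$ by coning over $\partial X(s,t)$, its $f$-vector equals $f(X(s,t))$ plus the cone contribution: one apex vertex, one edge from the apex to each vertex of $X(s,t)$, one triangle per boundary edge, and one facet per boundary triangle. The strategy is thus to track both $f(X(s,t))$ and $f(\partial X(s,t))$ inductively.

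For the base case $s=1$, $t>1$, I would enumerate the faces of $X(1,t)$ directly from Construction 4.16: four named facets whose vertex sets and $f$-vectors are explicitly given, plus $4(t-2)$ tetrahedra with shared ridges. Summing with shared faces removed and then coning reduces the check $f(S(1,t)) = (2t+7,\ 10t+15,\ 16t+12,\ 8t+4)$ to a finite algebraic verification. The case $t=1$ requires no check since $S(s,1)$ is undefined; $F(s,1)$ is a formal seed whose correctness is ratified a posteriori by comparing the recursion's output at $t=2$ against a direct count of $f(S(s,2))$, where $X(s,2)$ is built by attaching copies of the disc $X(s,1)$ to $X(s-1,K'(s,1))$.

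For the inductive step with $s,t>1$, I would compute the $f$-vector contribution of a single iteration of Steps 1--5 of Construction 4.16. Step 2 contributes $(0,1,2,1)$ to $f(X)$ (one tetrahedral facet sharing three vertices with the existing complex). Step 3 contributes a vector growing linearly in $K'(s,t-1)$ (one new large facet, plus $K'(s,t-1)-1$ new edges from $o$ and their incident ridges). Step 4 adds $2K'(s,t-1)-3$ new vertices (the $\alpha_i$'s and $\beta_j$'s) along with their incident edges and triangles created by subdividing existing ridges and facets, but no new facets. Step 5 attaches a copy of $X(s,t-1)$ (whose $f$-vector is extracted from $F(s,t-1)$ by inverting the cone formula) and identifies its hub, its $K'(s,t-1)-1$ non-hub roots, and some of its teeth with existing faces, subtracting the identified counts. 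The final tooth-rearrangement step replaces one edge and two triangles by another edge and two triangles, leaving $f(X(s,t))$ unchanged but altering $\partial X(s,t)$. Summing $K(s-1,K'(s,t-1))$ copies of this per-iteration vector and adding the initial $f(X(s-1,K'(s,t-1)))$ gives $f(X(s,t))$; coning then produces $f(S(s,t))$.

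The main obstacle will be matching the precise coefficients in the definition of $F(s,t)$. The functions $R(s,t-1)$ enter because $R$ counts the excess of root triangles over teeth in $X(s,t-1)$, which is exactly the number of boundary triangles (and edges) of each attached copy that are promoted to new facets (and ridges) by coning on $\partial X(s,t)$. Tracking these carefully through Step 5's identifications, and combining them with the constants contributed by Steps 2 and 3 and the $K'(s,t-1)$-proportional contributions from Step 4, should yield the exact coefficients $3K'(s,t-1),\ 3R(s,t-1),\ 4R(s,t-1),\ 2R(s,t-1)$ and the constants $-2,-4,1,3$ appearing in the recursion. Once the bookkeeping is in place, the desired equality reduces to algebraic identities involving the recursions for $K$, $K'$, and $R$ together with the inductive hypothesis applied to $F(s,t-1)$ and $F(s-1,K'(s,t-1))$.
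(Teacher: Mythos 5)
Your proposal follows essentially the same approach as the paper: double induction on $(s,t)$, tracking both $f(X(s,t))$ and $f(\partial X(s,t))$ step-by-step through Construction 4.16 and converting to $f(S(s,t))$ via the coning formula, with the per-step contributions you identify (e.g.\ $(0,1,2,1)$ for Step~2, $2K'(s,t-1)-3$ new vertices and no new facets in Step~4) matching the paper's tallies. One small correction: $R(s,t-1)$ enters not through ``what gets promoted to facets by coning'' but through the $f$-vector of the gluing disc $D$ in Step~5 (the bottom of $\partial X(s,t-1)$), whose triangle count equals (root triangles)$+$(teeth) $= 2(K'(s,t-1)-1) - R(s,t-1)$; carrying out the bookkeeping you describe would surface this automatically.
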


\begin{proof}
    Let $s,t>1$. Recall Steps 1-5 of Construction \ref{ball-construction}, which we repeat for each fillet of $X(s-1,K'(s,t-1))$ when constructing $X(s,t)$. In Steps 1-4, we add the following numbers of faces to $X(s-1,K'(s,t-1))$ and $\partial X(s-1,K'(s,t-1))$.
    \begin{table}[H]
    \centering
        \begin{NiceTabular}{|l l l|}
            \hline
            $i$ & total $i$-faces added & boundary $i$-faces added\\
            \hline
            0 & $2K'(s,t-1)-3$ & $2K'(s,t-1)-3$ \\
            1 & $7K'(s,t-1)-10$ & $6K'(s,t-1)-10$ \\
            2 & $5K'(s,t-1)-5$ & $4K'(s,t-1)-7$ \\
            3 & $2$ & \\
            \hline
        \end{NiceTabular}
        \caption{Faces added to $X(s-1,K'(s,t-1))$ in Steps 1-4.}
    \end{table}

    In Step 5, we attach a copy of $X(s,t-1)$ along a disc $D$ with
    \begin{align*}
        f(D) &= (2K'(s,t-1)-R(s,t-1)-2,\ 4K'(s,t-1)-2R(s,t-1)-7,\ 2K'(s,t-1)-R(s,t-1)-4),\\
        f(\partial D) &= (2K'(s,t-1)-R(s,t-1)-2,\ 2K'(s,t-1)-R(s,t-1)-2).
    \end{align*}
    If $t>2$, we thus add the following numbers of faces to $X(s-1,K'(s,t-1))$ and $\partial X(s-1,K'(s,t-1))$.
    \begin{table}[H]
    \setlength{\tabcolsep}{5pt}
    \centering
        \begin{NiceTabular}{|l l l|}
            \hline
            $i$ & total $i$-faces added & boundary $i$-faces added\\
            \hline
            0 & $f_0(X(s,t-1))-2K'(s,t-1)+R(s,t-1)+2$ & $f_0(\partial X(s,t-1))-2K'(s,t-1)+R(s,t-1)+2$ \\
            1 & $f_1(X(s,t-1))-4K'(s,t-1)+2R(s,t-1)+7$ & $f_1(\partial X(s,t-1))-6K'(s,t-1)+3R(s,t-1)+12$ \\
            2 & $f_2(X(s,t-1))-2K'(s,t-1)+R(s,t-1)+4$ & $f_2(\partial X(s,t-1))-4K'(s,t-1)+2R(s,t-1)+8$ \\
            3 & $f_3(X(s,t-1))$ & \\
            \hline
        \end{NiceTabular}
        \caption{Faces added to $X(s-1,K'(s,t-1))$ in Step 5.}
        \label{tbl-step-5}
    \end{table}

    The case $t=2$ is slightly different: every face of $X(s,1)$ is identified with an existing face, so we do not add any faces in Step 5. The middle column of Table \ref{tbl-step-5} is still consistent, as
    \[
        f(X(s,1)) = (2^{s+1}+4,\ 2^{s+2}+5,\ 2^{s+1}+2,\ 0) = (2K'(s,1)-2,\ 4K'(s,1)-7,\ 2K'(s,1)-4,\ 0),
    \]
    making all values in the middle column zero. However, the right column assumes that interior faces of $D$ will be in the interior of our 3-complex after identification, which is only true for $t>2$. Nevertheless, we can make the right column consistent with $t=2$ by redefining $f(\partial X(s,1))$ as
    \[
        f(\partial X(s,1)) = (2^{s+1}+4,\ 3\cdot 2^{s+1}+6,\ 2^{s+2}+4) = (2K'(s,1)-2,\ 6K'(s,1)-12,\ 4K'(s,1)-8),
    \]
    making all values in the right column zero. This is what $f(\partial X(s,1))$ will mean for the remainder of our proof; the abuse of notation is harmless, as $S(s,t)$ is only defined for $t>1$.

    For all $s,t>1$, we have added the following numbers of faces to $X(s-1,K'(s,t-1))$ and $\partial X(s-1,K'(s,t-1))$ in Steps 1-5.
    \begin{table}[H]
    \centering
        \begin{NiceTabular}{|l l l|}
            \hline
            $i$ & total $i$-faces added & boundary $i$-faces added\\
            \hline
            0 & $f_0(X(s,t-1))+R(s,t-1)-1$ & $f_0(\partial X(s,t-1))+R(s,t-1)-1$ \\
            1 & $f_1(X(s,t-1))+3K'(s,t-1)+2R(s,t-1)-3$ & $f_1(\partial X(s,t-1))+3R(s,t-1)+2$ \\
            2 & $f_2(X(s,t-1))+3K'(s,t-1)+R(s,t-1)-1$ & $f_2(\partial X(s,t-1))+2R(s,t-1)+1$ \\
            3 & $f_3(X(s,t-1))+2$ & \\
            \hline
        \end{NiceTabular}
        \caption{Faces added to $X(s-1,K'(s,t-1))$ in Steps 1-5.}
        \label{faces-added}
    \end{table}

    For all $s$ and $t$, let
    \begin{align*}
        F_0'(s,t) &= f_0(X(s,t)) + 1,\\
        F_1'(s,t) &= f_1(X(s,t)) + f_0(\partial X(s,t)),\\
        F_2'(s,t) &= f_2(X(s,t)) + f_1(\partial X(s,t)),\\
        F_3'(s,t) &= f_3(X(s,t)) + f_2(\partial X(s,t)),
    \end{align*}
    and $F'(s,t)=(F_0'(s,t),F_1'(s,t),F_2'(s,t),F_3'(s,t)).$ Recall that for $t>1$, we obtain the sphere $S(s,t)$ by coning over the boundary of $X(s,t)$, so $f(S(s,t))=F'(s,t)$.
    
    We can observe that $f(X(1,t)) = (2t+6,8t+9,10t,4t-4)$ and $f(\partial X(1,t)) = (2t+6,6t+12,4t+8)$ for all $t>1$, so
    \[
        F'(1,t) = (2t+7,\ 10t+15,\ 16t+12,\ 8t+4) = F(1,t).
    \]
    As previously discussed, for all $s$, we have $f(X(s,1)) = (2^{s+1}+4,2^{s+2}+5,2^{s+1}+2,0)$ and $f(\partial X(s,1)) = (2^{s+1}+4,3\cdot 2^{s+1}+6,2^{s+2}+4)$. Thus,
    \[
        F'(s,1) = (2^{s+1}+5,\ 3\cdot 2^{s+1}+9,\ 2^{s+3}+8,\ 2^{s+2}+4) = F(s,1).
    \]

    For $s,t>1$, we use Table \ref{faces-added} to obtain
    \begin{align*}
        F_0'(s,t) &= K(s-1,K'(s,t-1))[F_0'(s,t-1) + R(s,t-1)-2] + F_0'(s-1,K'(s,t-1)),\\
        F_1'(s,t) &= K(s-1,K'(s,t-1))[F_1'(s,t-1) + 3K'(s,t-1) + 3R(s,t-1) - 4] + F_1'(s-1,K'(s,t-1)),\\
        F_2'(s,t) &= K(s-1,K'(s,t-1))[F_2'(s,t-1) + 3K'(s,t-1) + 4R(s,t-1) + 1] + F_2'(s-1,K'(s,t-1)),\\
        F_3'(s,t) &= K(s-1,K'(s,t-1))[F_3'(s,t-1) + 2R(s,t-1) + 3] + F_3'(s-1,K'(s,t-1)).
    \end{align*}
    These match the recursive formulas for $F(s,t)$. Thus, $F'(s,t)=F(s,t)$ for all $s$ and $t$. We may conclude that the $f$-vector of $S(s,t)$ agrees with $F(s,t)$ for all $s$ and $t>1$.    
\end{proof}

\begin{proposition}
\label{face-asymptotics}
    For all $s$ and $t$ with $t>s$,
    \begin{align}
        1 < \frac{F_0(s,t)}{tK(s,t)} &< \frac{F_3(s,t)}{tK(s,t)} < 72,\label{F03-bounds}\\
        \frac{3s}{2} < \frac{F_1(s,t)}{tK(s,t)} &<  \frac{F_2(s,t)}{tK(s,t)} < 158s.\label{F12-bounds}
    \end{align}
\end{proposition}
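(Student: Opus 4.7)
The plan is to prove each inequality by double induction on $s$ and $t$, using the same telescoping technique that drives the proof of Lemma \ref{weight}. Setting $h_i(s,t) = F_i(s,t)/K(s,t)$ and dividing the recursive formula for $F_i(s,t)$ by $K(s,t) = K(s,t-1)K(s-1,K'(s,t-1))$, the recursion collapses to
\[
    h_i(s,t) - h_i(s,t-1) = \frac{g_i(s,t-1) + h_i(s-1,K'(s,t-1))}{K(s,t-1)},
\]
where $g_0(s,u) = R(s,u) - 2$, $g_1(s,u) = 3K'(s,u) + 3R(s,u) - 4$, $g_2(s,u) = 3K'(s,u) + 4R(s,u) + 1$, and $g_3(s,u) = 2R(s,u) + 3$. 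Summing over $u$ from $1$ to $t-1$ and dividing by $t$, we obtain
\[
    \frac{F_i(s,t)}{tK(s,t)} = \frac{h_i(s,1)}{t} + \frac{1}{t}\sum_{u=1}^{t-1} \frac{g_i(s,u) + h_i(s-1,K'(s,u))}{K(s,u)}.
\]

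The base case $s=1$ is a direct calculation: from $F(1,t) = (2t+7, 10t+15, 16t+12, 8t+4)$ and $K(1,t)=2$, each ratio $F_i(1,t)/(2t)$ is a linear-fractional function of $t$ whose values for $t \geq 2$ visibly lie in the claimed interval. For the inductive step, I fix $s \geq 2$ and assume the proposition for $s-1$ and all arguments exceeding $s-1$; this hypothesis applies to each $h_i(s-1,K'(s,u))$ because $K'(s,u) \geq K'(s,1) = 2^s + 3 > s-1$ by Lemma \ref{K-properties}. Using the inductive bound $h_i(s-1,n)/n \leq c$ (where $c$ is the appropriate envelope), Lemma \ref{K-quotient} gives
\[
    \frac{h_i(s-1,K'(s,u))}{K(s,u)} \;\leq\; c \cdot \frac{K'(s,u)}{K(s,u)} \;<\; c(1 + 2^{3-s}).
\]
The $g_i$ contributions are controlled by combining Lemma \ref{R-bounds} ($R(s,u) < 3K(s,u)/K(s,u-1)$) with the geometric tail $\sum_{u \geq 1} 1/K(s,u) \leq 1/(2^s-2)$ coming from Lemma \ref{K-properties}. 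The strict comparisons $F_0 < F_3$ and $F_1 < F_2$ propagate automatically because the difference recursions for $F_3 - F_0$ and $F_2 - F_1$ have non-negative coefficients and positive initial values.

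The main obstacle is that a bare constant envelope does not close under the induction, since each step in $s$ inflates the bound by $K'(s,u)/K(s,u) > 1$. I plan to prove the stronger inductive statement that $F_i(s,t)/(tK(s,t))$ is bounded by a sequence $c_s^{(i)}$ satisfying a recursion of the form $c_s^{(i)} \leq (c_{s-1}^{(i)} + D_s^{(i)})(1 + 2^{3-s})$, where $D_s^{(i)}$ absorbs the contribution of $g_i$ and the boundary term $h_i(s,1)/t$. Because the infinite product $\prod_{s \geq 2}(1+2^{3-s})$ converges---its logarithm is bounded by $\sum_{s \geq 2} 2^{3-s} = 4$---the sequences $c_s^{(0)}$ and $c_s^{(3)}$ stabilize well below $72$, while $c_s^{(1)}$ and $c_s^{(2)}$ grow at most linearly in $s$ (the additive contribution from $3K'(s,u)/K(s,u)$ per telescoping step is bounded by a constant), remaining well below $158s$. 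A finite numerical check of the first few values closes the argument. The lower bounds $F_0/(tK) > 1$ and $F_1/(tK) > 3s/2$ are easier: the former follows from $h_0(s,1) = 2 + 5/2^s > 2$ together with the inductive lower bound $h_0(s-1,n)/n > 1$ telescoped across $s$, and the latter from combining the per-step increment $g_1(s,u)/K(s,u) > 3 - 4/K(s,u)$ with the inductive lower bound $h_1(s-1,n)/n > 3(s-1)/2$, provided the hypothesis $t > s$ is used to ensure the boundary term $h_1(s,1)/t$ does not dominate the summed growth.
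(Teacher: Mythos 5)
Your proposal follows the same strategy as the paper: divide the recursion for $F_i(s,t)$ by $K(s,t)$, telescope in $t$ to obtain a closed summation formula, then induct on $s$ using Lemma \ref{K-quotient} to control $K'(s,u)/K(s,u)$, Lemma \ref{R-bounds} and the geometric decay of $1/K(s,u)$ (Lemma \ref{K-properties}v) to control the $g_i$-sums, and the convergence of $\prod_{i\geq 1}(1+2^{3-i})$ to keep the bounded-envelope case under control while the $3K'(s,u)/K(s,u)$ increment drives the linear-in-$s$ envelope for $F_1,F_2$. The lower bounds and the use of $t>s$ to absorb the boundary term also mirror the paper's argument. The only differences are cosmetic: you leave the explicit envelopes implicit where the paper pins them down as $\prod_{i=1}^s(1+2^{3-i})$ for $F_3/(tK)$ and $11s\prod_{i=2}^s(1+2^{3-i})$ for $F_2/(tK)$, and you describe the setup as ``double induction'' although, once the telescoping formula is in hand, the induction is really only on $s$. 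These are presentational, not mathematical, discrepancies.
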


\begin{proof}
    Observe that for all $t>1$ and for all $s$, respectively,
    \begin{align*}
        \frac{F(1,t)}{tK(1,t)} &= \left( 1+\frac{7}{2t},\ 5+\frac{15}{2t},\ 8+\frac{6}{t},\ 4+\frac{2}{t} \right),\\
        \frac{F(s,1)}{K(s,1)} &= (2+5\cdot2^{-s},\ 6+9\cdot2^{-s},\ 8+2^{3-s},\ 4+2^{2-s}).
    \end{align*}

    For $s,t>1$, we may rearrange the formula for $F_0(s,t)$ to obtain
    \[
        \frac{F_0(s,t)}{K(s,t)} - \frac{F_0(s,t-1)}{K(s,t-1)} = \frac{R(s,t-1)-2}{K(s,t-1)} + \frac{F_0(s-1,K'(s,t-1))}{K(s,t)}.
    \]
    Substituting the dummy variable $u$ for $t-1$ and summing from $u=1$ to $u=t-1$, we get
    \[
        \frac{F_0(s,t)}{K(s,t)} - \frac{F_0(s,1)}{K(s,1)} = \sum_{u=1}^{t-1} \frac{R(s,u)-2}{K(s,u)} + \sum_{u=1}^{t-1} \frac{F_0(s-1,K'(s,u))}{K(s,u+1)}.
    \]
    Equivalently,
    \begin{equation}
        \frac{F_0(s,t)}{K(s,t)} = 2+5\cdot2^{-s} + \sum_{u=1}^{t-1}\frac{R(s,u)-2}{K(s,u)} + \sum_{u=1}^{t-1} \frac{K'(s,u)}{K(s,u)} \cdot \frac{F_0(s-1,K'(s,u))}{K'(s,u)K(s-1,K'(s,u))}.\label{F0-formula}
    \end{equation}

    Through similar manipulations, we find that for all $s,t>1$,
    \begin{align}
        \frac{F_1(s,t)}{K(s,t)} &= 6+9\cdot2^{-s} + \sum_{u=1}^{t-1} \frac{3R(s,u)-4}{K(s,u)} + \sum_{u=1}^{t-1} \frac{K'(s,u)}{K(s,u)} \left[ 3 + \frac{F_1(s-1,K'(s,u))}{K'(s,u)K(s-1,K'(s,u))} \right],\label{F1-formula}\\
        \frac{F_2(s,t)}{K(s,t)} &= 8+2^{3-s} + \sum_{u=1}^{t-1} \frac{4R(s,u)+1}{K(s,u)} + \sum_{u=1}^{t-1} \frac{K'(s,u)}{K(s,u)} \left[ 3 + \frac{F_2(s-1,K'(s,u))}{K'(s,u)K(s-1,K'(s,u))} \right],\label{F2-formula}\\
        \frac{F_3(s,t)}{K(s,t)} &= 4+2^{2-s} + \sum_{u=1}^{t-1}\frac{2R(s,u)+3}{K(s,u)} + \sum_{u=1}^{t-1} \frac{K'(s,u)}{K(s,u)} \cdot \frac{F_3(s-1,K'(s,u))}{K'(s,u)K(s-1,K'(s,u))}.\label{F3-formula}
    \end{align}
    Note by Lemma \ref{K-properties}v that $K'(s,u)>s-1$ for all $u$. For the remainder of our proof, when working with expressions of the form $F_i(s-1,K'(s,u))$, we will use this inequality without explicit reference.
    
    We begin our proof of (\ref{F03-bounds}). It is clear that $F_3(s,t)>F_0(s,t)$ for all $s$ and $t$, so we need only prove that $F_0(s,t)/(tK(s,t))>1$ and $F_3(s,t)/(tK(s,t))<72$ for $t>s$. We will prove these in order.

    We prove that $F_0(s,t)/(tK(s,t))>1$ by induction on $s$. The base case $s=1$ holds, as $F_0(1,t)/(tK(1,t))\allowbreak = 1+7/(2t) > 1$ for all $t>1$.
    
    For the inductive step, let $s>1$, and suppose $F_0(s-1,t)/(tK(s-1,t))>1$ for all $t>s-1$. Recall that $K'(s,u)/K(s,u)>1$ for all $u$ by Lemma \ref{K-properties}iv. Thus, by (\ref{F0-formula}) and the inductive hypothesis, for all $t>1$,
    \[
        \frac{F_0(s,t)}{K(s,t)} > 5\cdot2^{-s} + \sum_{u=1}^{t-1}\frac{R(s,u)-2}{K(s,u)} + t+1.
    \]
    It is easy to verify by Lemma \ref{K-properties}v that $\sum_{u=1}^{t-1}2/K(s,u) < 1.$ Thus, $F_0(s,t)/K(s,t) > t$. It follows that $F_0(s,t)/(tK(s,t))>1$. This completes our proof that $F_0(s,t)/(tK(s,t))>1$ for $t>s$.

    Next, we prove by induction on $s$ that
    \[
        \frac{F_3(s,t)}{tK(s,t)} \leq \prod_{i=1}^s (1+2^{3-i}).
    \]
    Note that $\prod_{i=1}^\infty (1+2^{3-i}) \approx 71.527$, so the above inequality implies that $F_3(s,t)/(tK(s,t))<72$. The base case $s=1$ is immediate, as $F_3(1,t)/(tK(1,t)) = 4+2/t \leq 5$ for all $t>1$.

    For the inductive step, let $s>1$, and suppose $F_3(s-1,t)/(tK(s-1,t)) \leq \prod_{i=1}^{s-1}(1+2^{3-i})$ for all $t>s-1$. By Lemmas \ref{K-properties}v and \ref{R-bounds}, we have
    \[
        \sum_{u=1}^{t-1}\frac{2R(s,u)+3}{K(s,u)} < \sum_{u=1}^{t-2} \frac{6}{K(s,u)} + \sum_{u=1}^{t-1} \frac{3}{K(s,u)} < \frac{9}{2}.
    \]
    Meanwhile, by Lemma \ref{K-quotient} and the inductive hypothesis, for all $u$,
    \[
        \frac{K'(s,u)}{K(s,u)}\cdot\frac{F_3(s-1,K'(s,u))}{K'(s,u)K(s-1,K'(s,u))} < \prod_{i=1}^s(1+2^{3-i}).
    \]
    Thus, by (\ref{F3-formula}),
    \begin{align*}
        \frac{F_3(s,t)}{K(s,t)} &< \frac{19}{2} + (t-1)\prod_{i=1}^s (1+2^{3-i})\\
        &< t\prod_{i=1}^s (1+2^{3-i}).
    \end{align*}
    It follows that $F_3(s,t)/(tK(s,t)) < \prod_{i=1}^s (1+2^{3-i})$. This completes our proof that $F_3(s,t)/(tK(s,t)) \leq \prod_{i=1}^s(1+2^{3-i})$ for $t>s$, which in turn completes our proof of (\ref{F03-bounds}).

    We now begin our proof of (\ref{F12-bounds}). It is clear that $F_2(s,t) > F_1(s,t)$ for all $s$ and $t$, so we need only prove that $F_1(s,t)/(tK(s,t)) > 3s/2$ and $F_2(s,t)/(tK(s,t)) < 158s$ for $t>s$. We will prove these in order.

    We prove that $F_1(s,t)/(tK(s,t)) > 3s/2$ by induction on $s$. The base case $s=1$ holds, as $F_1(1,t)/(tK(1,t))\allowbreak = 5+15/(2t) > 3/2$ for all $t>1$.

    For the inductive step, let $s>1$, and suppose $F_1(s-1,t)/(tK(s-1,t))>3(s-1)/2$ for all $t>s-1$. We can verify by Lemma \ref{K-properties}v that $\sum_{u=1}^{t-1}4/K(s,u)<2$, so in (\ref{F1-formula}), the first three terms of the right-hand side must sum to a positive number. Recall again that $K'(s,u)/K(s,u)>1$ for all $u$. Thus, by (\ref{F1-formula}) and the inductive hypothesis, for all $t>s$, we have
    \begin{align*}
        \frac{F_1(s,t)}{K(s,t)} &> \sum_{u=1}^{t-1}\left[ 3+\frac{F_1(s-1,K'(s,u))}{K'(s,u)K(s-1,K'(s,u))} \right]\\
        &> (t-1)\left[ 3+\frac{3(s-1)}{2} \right]\\
        &= \frac{3(s+1)(t-1)}{2}\\
        &\geq \frac{3st}{2}.
    \end{align*}
    It follows that $F_1(s,t)/(tK(s,t)) > 3s/2$. This completes our proof that $F_1(s,t)/(tK(s,t))\allowbreak > 3s/2$ for $t>s$.

    Next, we prove by induction on $s$ that
    \[
        \frac{F_2(s,t)}{tK(s,t)} \leq 11s\prod_{i=2}^s (1+2^{3-i}).
    \]
    Note that $11\prod_{i=2}^\infty (1+2^{3-i}) \approx 157.359$, so the above inequality implies that $F_2(s,t)/(tK(s,t)) < 158s$. The base case $s=1$ is immediate, as $F_2(1,t)/(tK(1,t)) = 8+6/t \leq 11$ for all $t>1$.

    For the inductive step, let $s>1$, and suppose $F_2(s-1,t)/(tK(s-1,t)) \leq 11(s-1)\prod_{i=2}^{s-1}(1+2^{3-i})$ for all $t>s-1$. By Lemmas \ref{K-properties}v and \ref{R-bounds}, we have
    \[
        \sum_{u=1}^{t-1} \frac{4R(s,u)+1}{K(s,u)} < \sum_{u=1}^{t-2} \frac{12}{K(s,u)} + \sum_{u=1}^{t-1} \frac{1}{K(s,u)} < \frac{13}{2}.
    \]
    Meanwhile, by Lemma \ref{K-quotient} and the inductive hypothesis, for all $u$,
    \begin{align*}
        \frac{K'(s,u)}{K(s,u)}\left[ 3+\frac{F_2(s-1,K'(s,u))}{K'(s,u)K(s-1,K'(s,u))} \right] &< (1+2^{3-s})\left[ 3+11(s-1)\prod_{i=2}^{s-1} (1+2^{3-i}) \right]\\
        &< 11s(1+2^{3-s})\prod_{i=2}^{s-1}(1+2^{3-i})\\
        &= 11s\prod_{i=2}^s (1+2^{3-i}).
    \end{align*}
    Thus, by (\ref{F2-formula}),
    \begin{align*}
        \frac{F_2(s,t)}{K(s,t)} &< \frac{33}{2} + 11s(t-1)\prod_{i=2}^{s} (1+2^{3-i})\\
        &< 11st\prod_{i=2}^{s} (1+2^{3-i}).
    \end{align*}
    It follows that $F_2(s,t)/(tK(s,t)) < 11s\prod_{i=2}^s(1+2^{3-i})$. This completes our proof that $F_2(s,t)/(tK(s,t)) \leq 11s\prod_{i=2}^s(1+2^{3-i})$ for $t>s$, which in turn completes our proof of (\ref{F12-bounds}).
\end{proof}

\subsection{Main result}
\label{ss-main-result}

We now prove our main result.

\begin{theorem}
    For arbitrarily large $n$, there exist shellable, dual shellable 3-spheres $S$ such that
    \[
        f(S) = (\Theta(n),\Theta(n\alpha(n)),\Theta(n\alpha(n)),\Theta(n)).
    \]
\end{theorem}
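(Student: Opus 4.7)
The plan is to take the diagonal subfamily $S = S(s, s+1)$ for $s \geq 2$ (so that $t = s+1 > s$, as required by Proposition \ref{face-asymptotics}, and $t > 1$, as required by Propositions \ref{S-shellable} and \ref{S-dual-shellable}). Shellability and dual shellability are then immediate from the two propositions just cited; only the $f$-vector asymptotics need verification.

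Set $n \coloneq (s+1)K(s, s+1)$. Proposition \ref{face-asymptotics} gives the four bounds
\[
    1 < \frac{F_0(s, s+1)}{n} < \frac{F_3(s, s+1)}{n} < 72, \qquad \frac{3s}{2} < \frac{F_1(s, s+1)}{n} < \frac{F_2(s, s+1)}{n} < 158s,
\]
so that $F_0(s,s+1), F_3(s,s+1) = \Theta(n)$ and $F_1(s,s+1), F_2(s,s+1) = \Theta(sn)$. By Proposition \ref{face-numbers}, these are precisely the face numbers of $S$, so it remains only to identify $s$ with $\Theta(\alpha(n))$.

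For this step I would apply Lemma \ref{K-Ackermann} at $t = s+1$, which gives
\[
    A(s, s+1) \leq n < A(s+1, s+3).
\]
From the left-hand inequality and Lemma \ref{ackermann-properties}i, $n > A(s,s)$, hence $\alpha(n) \geq s+1$. From the right-hand inequality and two applications of Lemma \ref{ackermann-properties}iii (giving $A(s+1, s+3) \leq A(s+2, s+3) \leq A(s+3, s+3)$, since $s+3 > 2$), we get $n < A(s+3, s+3)$, hence $\alpha(n) \leq s+3$. Therefore $s = \Theta(\alpha(n))$, which upgrades the bounds on $F_1$ and $F_2$ to $\Theta(n\alpha(n))$. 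Since $K(s, s+1) \to \infty$ as $s \to \infty$, the values $n = n(s)$ are arbitrarily large, completing the proof.

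There is no serious obstacle here: all the heavy lifting has been done in the preceding subsections. The only subtlety is confirming that the diagonal choice $t = s+1$ (rather than $t = s$) is both permitted by the hypothesis $t > s$ of Proposition \ref{face-asymptotics} and sufficient to sandwich $n$ between consecutive Ackermann values in the way the inverse Ackermann function cares about.
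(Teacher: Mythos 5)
Your proposal is correct and follows the same strategy as the paper: the paper takes $S(s-1,s)$ with $n=sK(s-1,s)$, which is the same one-parameter diagonal family as your $S(s,s+1)$ with $n=(s+1)K(s,s+1)$ after a shift of index. Your spelled-out derivation of $s=\Theta(\alpha(n))$ from Lemma \ref{K-Ackermann} and Lemma \ref{ackermann-properties} is a slightly more explicit version of the step the paper states in one line, but the reasoning is identical.
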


\begin{proof}
\label{sphere-result}
    For arbitrary $s>1$, let $n=sK(s-1,s)$, and let $S=S(s-1,s)$. By Propositions \ref{S-shellable}-\ref{S-dual-shellable}, $S$ is shellable and dual shellable.

    By Propositions \ref{face-numbers}-\ref{face-asymptotics},
    \begin{align*}
        f_0(S),f_3(S) &= \Theta(sK(s-1,s)),\\
        f_1(S),f_2(S) &= \Theta(s^2K(s-1,s)).
    \end{align*}
    By Lemma \ref{K-Ackermann}, we have $s=\Theta(\alpha(n))$. Thus,
    \[
        f(S) = (\Theta(n),\Theta(n\alpha(n)),\Theta(n\alpha(n)),\Theta(n)).\qedhere
    \]
\end{proof}

\subsection{Further questions}
\label{ss-questions}

We have constructed arbitrarily fat, shellable, dual shellable 3-spheres. This leaves us with a key question: are they polytopes?

\begin{observation}
    $S(1,t)$ is isomorphic to the boundary complex of a 4-polytope for all $t>1$.
\end{observation}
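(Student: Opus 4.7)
The plan is to realize $S(1,t)$ as the boundary complex of an explicit 4-polytope $P$ by placing the $2t+7$ vertices of $S(1,t)$ in $\mathbb{R}^4$ so that their convex hull has the prescribed facet structure.

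First, observe that every 3-cell of $X(1,t)$ is a convex 3-polytope: the gray and blue facets are triangular bipyramids, the pink facet is a 6-vertex polytope obtained as a connected sum of three tetrahedra, and the yellow facet together with the remaining cells $b_2c_ic_{i+1}a_5$, $c_ic_{i+1}b_3a_5$, $b_3c'_ic'_{i+1}a_5$, $c'_ic'_{i+1}b_4a_5$ are tetrahedra. Using the bird's-eye view of Construction \ref{ball-construction}, I would place each vertex $p$ of $X(1,t)$ at a point $(\cos\theta_p,\sin\theta_p,z_p,h_p)\in\mathbb{R}^4$, where $\theta_p$ is the profile angle of $p$ and $z_p,h_p$ are heights to be chosen, so that the lower envelope of $Q:=\operatorname{conv}\{(\cos\theta_p,\sin\theta_p,z_p,h_p)\}$ (the union of facets whose outward normal has negative last coordinate) is combinatorially $X(1,t)$. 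In particular, the 2-sphere on $\partial Q$ separating lower facets from upper facets is $\partial X(1,t)$.

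Second, I would place the apex $v$ at $(0,0,0,H)\in\mathbb{R}^4$ for $H$ sufficiently large that $v$ lies strictly above $Q$, and set $P:=\operatorname{conv}(Q\cup\{v\})$. Then the facets of $P$ not containing $v$ are the lower facets of $Q$, namely the 3-cells of $X(1,t)$, and the facets containing $v$ are precisely the pyramids $v*F$ for each 2-face $F$ of $\partial X(1,t)$. Hence $\partial P\cong X(1,t)\cup(v*\partial X(1,t))=S(1,t)$.

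The main obstacle is constructing the heights $z_p,h_p$ so that every 3-cell of $X(1,t)$---particularly the non-simplicial bipyramids and the 6-vertex pink facet---is realized with the correct combinatorial type rather than some spurious triangulation of its vertex set. I expect to carry this out by induction on $t$: a direct coordinate assignment handles the base case $t=2$, and the step from $t$ to $t+1$ accommodates the two new fillet vertices and four new tetrahedral facets per fillet via a small local perturbation of the existing heights that preserves the combinatorial type of every earlier facet.
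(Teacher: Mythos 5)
Your overall strategy---lift the bird's-eye view of $X(1,t)$ to a point configuration in $\mathbb{R}^4$ with the first two coordinates on the unit circle and the last two as heights, realize $X(1,t)$ as a lower envelope, then take the convex hull with an apex above to produce $S(1,t)$---is sound in conception, and the paper's explicit coordinates for $S(1,2)$ do sit on the unit circle in the first two coordinates exactly as you propose. But your inductive step has a genuine gap. You write that passing from $t$ to $t+1$ is achieved ``via a small local perturbation of the existing heights that preserves the combinatorial type of every earlier facet.'' A perturbation of existing heights tells you nothing about where the two new fillet vertices should go, and---worse---the non-simplicial cells of $X(1,t)$ (the five-vertex bipyramids and the six-vertex pink cell) require exact affine degeneracies among their vertices in order to appear as facets of a 4-polytope, so a generic small perturbation \emph{destroys} these combinatorial types rather than preserving them. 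You would need to show that the new vertices can be placed while all the needed coplanarities are re-imposed simultaneously, and you give no mechanism for doing so.

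The paper closes exactly this gap by inducting on the realized 4-polytope rather than on the height configuration. Given a polytope realizing $S(1,t-1)$, it places the two new vertices \emph{beyond} the 2-faces $\bigtriangleup a v w_{t-1}$ and $\bigtriangleup a v x_{t-1}$, where $v$ is the apex, $a$ the root clockwise-adjacent to the hub, and $w_{t-1},x_{t-1}$ the previous initial fillet vertices. Placing a point beyond a $(d-2)$-face of a $d$-polytope is a precise, combinatorially determined operation (Gr\"unbaum, \S 5.2): it deletes exactly the two facets containing that face and replaces them by pyramids over their remaining ridges, which is precisely the local change from $S(1,t-1)$ to $S(1,t)$, and it leaves every other facet---including the non-simplicial ones---untouched. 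To repair your argument, replace the perturbation step with this ``beyond a face'' step; you can then also skip the intermediate realization of $X(1,t)$ as a lower envelope and work directly with the 4-polytope realizing $S(1,t-1)$.
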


Figure \ref{fig-sage} gives SageMath code for a 4-polytope realizing $S(1,2)$. The ten vertices of $X(1,2)$ are arranged roughly as in Figure \ref{fig-x1t} (left), taking the first three coordinates respectively as east, north, and ``up" out of the page, and ignoring the fourth coordinate. Label vertices as follows:
\[
    v = \textstyle\left(-\frac{4}{5},-\frac{3}{5},1,12\right),\quad a = \textstyle\left(-1,0,\frac{4}{3},0\right),\quad w_2 = \textstyle\left(\frac{4}{5},\frac{3}{5},\frac{1}{3},\frac{3}{5}\right),\quad x_2 = \textstyle\left(-\frac{5}{13},\frac{12}{13},\frac{8}{13},\frac{16}{13}\right).
\]
Here $v$ is the unique vertex disjoint from $X(1,2)$, $a$ is the root of $X(1,2)$ clockwise-adjacent to the hub, and $w_2$ and $x_2$ are the initial vertices of each fillet. For each successive $t>2$, we may obtain a polytope realizing $S(1,t)$ by placing new vertices $w_t$ and $x_t$ beyond triangles $\bigtriangleup avw_{t-1}$ and $\bigtriangleup avx_{t-1}$, respectively, and taking the convex hull.

\begin{figure}[t]
\centering
\lstset{breaklines=true,basicstyle = \ttfamily,columns=fullflexible}
\begin{lstlisting}
sage: P = Polyhedron(vertices=[[0,-1,0,0], [1,0,0,0], [12/13,5/13,3/13,6/13], [4/5,3/5,1/3,3/5], [3/5,4/5,0,0], [5/13,12/13,1/3,15/13], [-5/13,12/13,8/13,16/13], [-3/5,4/5,2/5,0], [-4/5,3/5,9/10,8/5], [-1,0,4/3,0], [-4/5,-3/5,1,12]])
\end{lstlisting}
\caption{SageMath code for a 4-polytope on eleven vertices with boundary isomorphic to $S(1,2)$. All vertices project onto the unit circle in the first two coordinates.}
\label{fig-sage}
\end{figure}

\begin{conjecture}
\label{conj-polytopal}
    For infinitely many $s$, there exists $t>s$ such that $S(s,t)$ is isomorphic to the boundary complex of a 4-polytope.
\end{conjecture}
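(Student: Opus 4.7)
The natural strategy is to extend, by double induction on $s$ and $t$, the explicit realization the paper already exhibits for $S(1,t)$. The base case $s=1$ is handled by the beyond-construction starting from the eleven-vertex polytope of Figure \ref{fig-sage}. The plan is to carry a stronger inductive invariant: namely, that $X(s,t)$ is realized as the lower envelope (in the fourth coordinate) of a convex 4-polytope $P(s,t)$ whose vertices project onto the unit circle in the first two coordinates, in the same cyclic order as they appear along the profile of $X(s,t)$, so that property \ref{property-circle} is realized geometrically. Under this invariant, $S(s,t)$ is immediately polytopal: place an apex $v$ at $(0,0,0,M)$ for sufficiently large $M$ and take the convex hull of $P(s,t) \cup \{v\}$; the new facets of this polytope are precisely the pyramids introduced when coning $\partial X(s,t)$ to produce $S(s,t)$.

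For the inductive step, fix $s,t > 1$ and assume such realizations $P(s,t-1)$ and $P(s-1,K'(s,t-1))$ exist. Construction \ref{ball-construction} prescribes how to build $X(s,t)$ by (i) inserting new roots and ridges along each fillet of $X(s-1,K'(s,t-1))$ via Steps \ref{step-1}--\ref{step-4}, then (ii) gluing in a copy of $X(s,t-1)$ via Step \ref{step-5}. The geometric counterpart is to carry out these modifications as a sequence of beyond-constructions applied to $P(s-1,K'(s,t-1))$: each new vertex is placed just beyond a carefully chosen subset of facets so that the resulting combinatorial change matches the prescribed step, while the projection to the unit circle is preserved by positioning the new vertex directly above its desired image point. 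To insert a copy of $X(s,t-1)$, one can apply a projective transformation sending the target solid angle to a standard position, insert an affine image of $P(s,t-1)$ at an appropriately small scale, and undo the transformation.

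The main obstacle is realizing properties \ref{property-fillet}--\ref{property-hub} geometrically: the combinatorial statement that nonconsecutive fillet vertices share no facet, and that the hub shares no facet with any non-root vertex, must correspond to actual non-visibility in $\mathbb{R}^4$. In the abstract construction these conditions come for free, but in a convex polytope they become nontrivial visibility constraints that accumulate as $s$ and $t$ grow. Managing them will likely require placing new vertices at exponentially (or Ackermann-ially) decreasing heights above the base circle, so that each recursive insertion is geometrically localized and does not interfere with earlier structure. A secondary issue is preserving strong regularity throughout, which a generic perturbation argument should handle. If the full recursion resists, a fallback for Conjecture \ref{conj-polytopal} as stated is to prove polytopality only along the diagonal subsequence $(s-1,s)$ used in the main theorem, possibly via numerical verification for small $s$ extending the approach of Figure \ref{fig-sage}, together with an extension argument from $S(s,t)$ to $S(s,t+1)$ analogous to the $S(1,t) \to S(1,t+1)$ step.
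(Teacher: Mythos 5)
The statement in question is a conjecture; the paper offers no proof of it, only the observation that $S(1,t)$ is polytopal (via the realization of $S(1,2)$ in Figure \ref{fig-sage} and a beyond-construction to extend it) plus an informal paragraph explaining why the authors find the conjecture plausible, with Steps \ref{step-3} and \ref{step-5} of Construction \ref{ball-construction} explicitly flagged as the hard parts. Your proposal elaborates the same intuition at somewhat greater length, but it is still a plan, not a proof: you yourself identify the geometric realization of properties \ref{property-fillet}--\ref{property-hub} as the ``main obstacle,'' say it ``will likely require'' Ackermann-ially decreasing heights, and offer a fallback of numerical verification for small $s$. None of these resolve the conjecture.

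Two concrete gaps. First, Step \ref{step-3} attaches a non-simplicial facet on $K'(s,t-1)+2$ vertices; in a convex 4-polytope this forces all of those vertices onto a common affine hyperplane. Beyond-constructions generically produce simplicial facets and never impose coplanarities among vertices that are already in place, so placing a new vertex ``just beyond a carefully chosen subset of facets'' does not realize this step, and the flatness constraints accumulate at every level of the recursion. Second, Step \ref{step-5} glues an entire copy of $X(s,t-1)$ to the growing complex along a disc; in a polytope the two sides of the gluing disc must coincide as geometric polygonal complexes, not merely be combinatorially isomorphic, and a projective transformation plus rescaling of a fixed $P(s,t-1)$ will not in general produce this exact fit. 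You give no argument that it does. Finally, your inductive invariant that every vertex projects onto the unit circle in the first two coordinates is a constraint the paper explicitly suggests relaxing precisely because it overconstrains the geometry; carrying it rigidly through the induction makes the problem harder, not easier. Until these obstacles are addressed, the proposal is essentially the same heuristic the paper already records after Conjecture \ref{conj-polytopal}, and the conjecture remains open.
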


If one believes in arbitrarily fat 4-polytopes, then we propose $S(s,t)$ as plausible candidates. Intuitively, we imagine a version of Construction \ref{ball-construction} where every step yields a subcomplex of the boundary of a 4-polytope. At each recursion, we have many degrees of freedom when adding vertices in Step 4, especially if we relax the requirement that all vertices project onto the unit circle. Steps 3 and 5 are tricky---3 requires many vertices to lie on a common hyperplane, and 5 requires two complexes to fit together exactly---but we suspect that suitable choices in Step 4 make the rest of the construction possible.

There has been progress on finding polytopal realizations of 3-spheres algorithmically \cite{brinkmann18}\cite{firsching20}\cite{gouveia23}. However, while we conjecture that realizations of many $S(s,t)$ exist, we are not optimistic that they can be found by computer search. The sphere $S(3,4)$ already has more than $2^{2^{2^{15}}}$ vertices, putting it far beyond the scope of relevant algorithms.

Finally, we remark that ours is one of many related constructions for fat 3-spheres. One could modify Construction \ref{ball-construction} by subdividing some facets or splitting some vertices without changing the $f$-vector asymptotically. Where we add one vertex to obtain $S(s,t)$ from $X(s,t)$, one could add many more vertices and still keep fatness roughly the same. Such modifications could yield further candidates for realizability as fat 4-polytopes.


\section{Acknowledgments}
The author would like to thank Isabella Novik for her constant support in writing this paper and for her thorough editing feedback. The author was  partially supported by a graduate fellowship from NSF grant DMS-2246399.


\printbibliography

\end{document}